\g@addto@macro\bfseries{\boldmath}
\theoremstyle{plain}
\newtheorem{thm}{Theorem}[section]
\newtheorem{lem}[thm]{Lemma}
\newtheorem{cor}[thm]{Corollary}
\newtheorem{conj}[thm]{Conjecture}
\newtheorem*{exa}{Example}
\newtheorem*{rem}{Remark}
\def\Gal{\mathrm{Gal}}
\def\ord{\mathrm{ord}}
\def\bcases{\begin{cases}}
\def\ecases{\end{cases}}
\def\t{\hbox}
\def\f{\frac}
\def\e{\equiv}
\newcommand{\jac}[2]{\left(\displaystyle{\frac{#1}{#2}}\right)}
\newcommand{\Q}{\mathbb{Q}}
\newcommand{\Z}{\mathbb{Z}}
\newcommand{\OO}{\mathcal{O}}
\newcommand{\p}{\mathfrak{p}}
\newcommand{\q}{\mathfrak{q}}
\newcommand{\PP}{\mathfrak{P}}
\newcommand{\QQ}{\mathfrak{Q}}
\newcommand{\A}{\mathfrak{A}}
\renewcommand{\a}{\alpha}
\renewcommand{\b}{\beta}
\renewcommand{\c}{\gamma}
\numberwithin{equation}{section}
\begin{document}

\title{Ring class fields and a result of Hasse}

\author{Ron Evans}
\address{Department of Mathematics, UCSD\\
La Jolla, CA  92093-0112}
\email{revans@ucsd.edu}
\urladdr{https://mathweb.ucsd.edu/~revans}

\author{Franz Lemmermeyer}
\address{M\"{o}rikeweg 1\\
73489 Jagstzell, Germany}
\email{hb3@uni-heidelberg.de}
\urladdr{https://www.mathi.uni-heidelberg.de/~flemmermeyer}

\author{Zhi-Hong Sun}
\address{\parbox{\linewidth}{School of Mathematics and Statistics,\\
Huaiyin Normal University,\\
Huaian, Jiangsu 223300, P.R. China}}
\email{zhsun@hytc.edu.cn}
\urladdr{http://maths.hytc.edu.cn/szh1.htm}

\author{Mark Van Veen}
\address{2138 Edinburg Avenue\\
Cardiff by the Sea, CA 92007}
\email{mavanveen@ucsd.edu}
\urladdr{}

\subjclass{11R11, 11R27, 11R29, 11R37}

\keywords{ring class fields, fundamental units,  class number,
relative discriminants, Artin symbol, cubic residuacity}

\date{April 2024}

\begin{abstract}
For squarefree $d>1$, let $M$ denote the ring class field for the order
$\Z[\sqrt{-3d}]$ in $F=\Q(\sqrt{-3d})$.
Hasse proved that $3$ divides the class
number of $F$ if and only if there exists
a cubic extension $E$ of $\Q$ such that $E$ and $F$
have the same discriminant.
Define the real cube roots $v=(a+b\sqrt{d})^{1/3}$
and $v'=(a-b\sqrt{d})^{1/3}$, where $a+b\sqrt{d}$
is the fundamental unit in $\Q(\sqrt{d})$.
We prove that $E$ can be taken as $\Q(v+v')$
if and only if $v \in M$.   As byproducts of the proof,
we give explicit congruences for $a$ and $b$
which hold if and only if $v \in M$, and we also show that
the norm of the relative discriminant of $F(v)/F$
lies in $\{1, 3^6\}$ or $\{3^8, 3^{18}\}$
according as $v \in M$ or $v \notin M$.
We then prove that $v$ is always in
the ring class field for the order
$\Z[\sqrt{-27d}]$ in $F$.
Some of the results above are extended for subsets of $\Q(\sqrt{d})$ properly
containing the fundamental units $a+b\sqrt{d}$. 
\end{abstract}

\maketitle

\section{Introduction}
Write $u=a + b\sqrt{d}$ for the fundamental
unit in $\Q(\sqrt{d})$, where $d>1$ is squarefree.
Define the real cube roots
$v=(a+b \sqrt {d})^{1/3}$ and $v'=(a-b \sqrt {d})^{1/3}$. 
Note that $v v'=\pm 1$.
Write $t=v+v'$ and
$F=\Q(\sqrt{-3d})$.
Let $M$ denote the ring class field for the order $\Z[\sqrt{-3d}]$ in $F$.
For number field extensions $K/k$, write
$D(K/k)$   for the relative discriminant
and $D(K)$ for the discriminant.

In \cite{H},  
Hasse proved that $3$ divides the class number of $F$  
if and only if  there exists a cubic extension $E$ of  $\Q$ such that
$D(E)=D(F)$.
When $v \in M$,  we prove in Theorems 1.1 and 1.2 below that
$E$ can be taken to be  $\Q(t)$.
Theorems 1.1 and 1.2 address the cases  $3 \nmid d$ and
$3 \mid d$, respectively.   They are proved in  Sections 2 and 3.
\begin{thm}\label{Thm 1.1}
Suppose that $3 \nmid d$ and $v \in M$.   Then the fields $E=\Q(t)$
and $F$ have the same discriminant.
\end{thm}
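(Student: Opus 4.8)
The plan is to make $E$ and its Galois closure $\widetilde E$ explicit, to use the hypothesis $v\in M$ to force the cyclic cubic extension $\widetilde E/F$ to be unramified at every place, and then to read off $D(E)=D(F)$. I would start from the minimal polynomial of $t=v+v'$: with $\varepsilon=vv'=\Nm_{\Q(\sqrt d)/\Q}(a+b\sqrt d)\in\{\pm1\}$, cubing $t=v+v'$ yields $t^3=2a+3\varepsilon t$, so $t$ is a root of $f(x)=x^3-3\varepsilon x-2a\in\Z[x]$, and using $a^2-db^2=\varepsilon$ one computes $\operatorname{disc}(f)=-108\,db^2$. A fundamental unit is never a cube in $\Q(\sqrt d)$, so $f$ is irreducible and $E=\Q(t)$ is a cubic field; since $\operatorname{disc}(f)<0$ is not a square, the Galois closure $\widetilde E=E\bigl(\sqrt{\operatorname{disc}f}\,\bigr)$ has degree $6$ with $\operatorname{Gal}(\widetilde E/\Q)\cong S_3$, and its unique quadratic subfield is $\Q\bigl(\sqrt{-108\,db^2}\,\bigr)=\Q(\sqrt{-3d})=F$; thus $\widetilde E/F$ is cyclic of degree $3$.

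Next I would locate $\widetilde E$ inside $F(v)$ and bound its ramification. Since $v'=\varepsilon/v$ and $\sqrt d=(v^3-a)/b$ both lie in $\Q(v)$, the three roots $\omega^{j}v+\omega^{-j}v'$ of $f$ (where $\omega$ is a primitive cube root of unity) lie in $\Q(\omega,v)$, so $\widetilde E\subseteq\Q(\omega,v)=\Q(\sqrt{-3d},v)=F(v)$. Writing $L=F(\sqrt{-3})=\Q(\sqrt{-3},\sqrt d)$, we have $F(v)=L(v)=L\bigl(\sqrt[3]{a+b\sqrt d}\,\bigr)$ with $\omega\in L$. Two facts then control the ramification of $F(v)/F$: (i) $L/F$ is unramified at every prime, because $\Q(\sqrt{-3})/\Q$ is unramified at $2$ while at each odd prime $p$ ramified in $F$ the assumption $3\nmid d$ forces $e_p(L/\Q)=e_p(F/\Q)=2$ (equivalently, $L$ lies in the genus field of $F$); and (ii) $F(v)/L$, being obtained by adjoining a cube root of the \emph{unit} $a+b\sqrt d$, is unramified outside $3$. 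Hence $F(v)/F$, and in particular $\widetilde E/F$, is unramified outside $3$.

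Now I would invoke $v\in M$. The order $\Z[\sqrt{-3d}]$ has conductor $(2)$ in $\OO_F$ when $d\equiv1\pmod4$ and is maximal otherwise, so in all cases $M/F$ is unramified outside $2$; and $v\in M$ forces $F(v)\subseteq M$, so $F(v)/F$ is unramified outside $2$. Combining with the previous paragraph, $F(v)/F$ — hence $\widetilde E/F$ — is unramified at every finite prime, and trivially at the archimedean place since $F$ is imaginary. Thus the conductor $\mathfrak f$ of the cyclic cubic extension $\widetilde E/F$ is trivial, and the conductor--discriminant formula for $\widetilde E/F$ together with the classical relation $|D(E)|=\Nm_{F/\Q}(\mathfrak f)\,|D(F)|$ for a cubic field and its quadratic resolvent gives $|D(E)|=|D(F)|$; since each of $E$ and $F$ has exactly one complex place, both discriminants are negative, so $D(E)=D(F)$. (Alternatively, once $\widetilde E/F$ is unramified, the decomposition group in $\widetilde E/\Q$ at any ramified prime $p$ has order $2$, so $\widetilde E_{\mathfrak P}/\Q_p$ coincides with both $F_{\mathfrak p}/\Q_p$ and the ramified local factor of $E$ at $p$, which forces $v_p(D(E))=v_p(D(F))$ at every $p$.)

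The step I expect to require the most care is the pair of ramification bounds — showing that $F(v)/F$ ramifies only above $3$ and $M/F$ only above $2$ — in particular the case $p=2$ with $d$ even (where one must still verify that $L/F$ is unramified at $2$) and the conductor computation for $\Z[\sqrt{-3d}]$. If the genus-theoretic treatment of $L/F$ turns out to be inconvenient in some residue class, the fallback is to compute the relative discriminant of $F(v)/F$ directly via local Kummer theory at the primes above $3$ — which, as the abstract indicates, simultaneously yields the explicit congruences on $a$ and $b$ equivalent to $v\in M$ — and then to descend to $D(E)$.
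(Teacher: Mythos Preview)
Your argument is correct, and it reaches the conclusion by a genuinely different and shorter route than the paper's.

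Both approaches begin by showing that $L=F(v)$ is unramified over $F$; this is the content of the paper's Lemma~2.1, and you obtain it by combining ``unramified outside~$3$'' (from the genus-field fact $B/F$ unramified plus the Kummer observation that $L/B$ adjoins the cube root of a \emph{unit}) with ``unramified outside~$2$'' (from $F(v)\subset M$). From there the paths diverge. The paper descends through $K=\Q(v)$: it computes $D(L/K)$ (Lemma~2.2), uses the elementary Lemma~2.3 to control $D(K/k)$ at primes $\ne 3$, evaluates $D(K)$ (Lemma~2.5), and then pins down $D(k)$ by a case analysis on $d\bmod 4$. You instead pass directly to the Galois closure $\widetilde E=F(t)$ of $E$: once $\widetilde E/F$ is unramified, the classical relation $|D(E)|=|D(F)|\cdot \Nm_{F/\Q}(\mathfrak f)$ between a cubic field, its quadratic resolvent, and the conductor of the cyclic cubic $\widetilde E/F$ gives $D(E)=D(F)$ immediately. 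Your approach is cleaner and avoids Lemmas~2.2--2.5 and the $d\bmod 4$ casework entirely; the paper's longer computation, on the other hand, produces as byproducts the explicit values of $D(L/K)$ and $D(K)$ that are reused for Theorem~1.5 and in the parallel Sections~3--5.
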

\begin{thm}\label{Thm 1.2}
Suppose that $3 \mid d$ and $v \in M$.   Then the fields $E=\Q(t)$
and $F$ have the same discriminant.
\end{thm}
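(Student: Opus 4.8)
The plan is to realise $E=\Q(t)$ as the cubic field attached to a cyclic cubic extension $L$ of $F$ that sits inside $M$, to show that $L/F$ is everywhere unramified, and to deduce $D(E)=D(F)$ from the conductor--discriminant formula. First I would pin down $t=v+v'$: from $v^{3}+v'^{3}=2a$ and $vv'=n:=a^{2}-db^{2}=\Nm(a+b\sqrt d)\in\{1,-1\}$ one gets $t^{3}-3nt-2a=0$. The fundamental unit $a+b\sqrt d$ is not a cube in $\Q(\sqrt d)^{\times}$ (a cube root of it would again be a unit), so $v$ has degree $6$ over $\Q$, whence $t$ has degree $3$ and $g(x):=x^{3}-3nx-2a$ is irreducible; thus $E=\Q(t)$ is cubic. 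A short computation gives $\operatorname{disc}(g)=-108\,d\,b^{2}=-2^{2}3^{3}d\,b^{2}$, which is negative and hence not a square, so $\sqrt{\operatorname{disc}(g)}$ generates $F=\Q(\sqrt{-3d})$; therefore the Galois closure $L:=EF$ of $E/\Q$ has $\Gal(L/\Q)\cong S_{3}$, with $F$ its quadratic subfield and $L/F$ cyclic of degree $3$.

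Now I would use $v\in M$. Ring class fields are Galois over $\Q$, and $v'$ is a $\Q$-conjugate of $v$ (both are roots of the irreducible sextic $x^{6}-2ax^{3}+n$), so $v'\in M$; hence $t\in M$, and since $F\subseteq M$ we get $L=EF\subseteq M$. It remains to show $L/F$ is unramified, and here I would argue at two sets of primes. Away from $3$: one has $L\subseteq F(\omega,v)$ where $\omega$ is a primitive cube root of unity (indeed $t=v+nv^{-1}\in F(\omega,v)$), and the tower $F\subseteq F(\omega)\subseteq F(\omega,v)$ is a quadratic step ramified only above $3$ followed by adjunction of a cube root of the \emph{unit} $\varepsilon=a+b\sqrt d$; hence $L/F$ is unramified outside $3$.

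Above $3$: here the hypothesis $3\mid d$ enters. Then $F=\Q(\sqrt{-d/3})$ and $3$ is unramified in $F$, while $\Z[\sqrt{-3d}]=\Z+\mathfrak c\,\OO_{F}$ has conductor $\mathfrak c$ dividing $6$; so $M$ lies in the ray class field of $F$ of modulus $6\OO_{F}$, and consequently the conductor of $L/F$ divides $6\OO_{F}$, hence has exponent at most $1$ at each $\p\mid 3$. But a ramified cyclic cubic extension of the $3$-adic field $F_{\p}$ is wildly ramified, so its conductor exponent is at least $2$; therefore $L/F$ is unramified at $\p$ too, and thus everywhere. Finally, the conductor--discriminant formula for $L/\Q$ (applied to $\operatorname{Ind}_{\Gal(L/F)}^{\Gal(L/\Q)}\psi$ for a faithful character $\psi$ of $\Gal(L/F)$) gives $|D(E)|=|D(F)|\cdot\Nm_{F/\Q}\big(\mathfrak f(L/F)\big)=|D(F)|$; since $D(F)<0$ and $D(E)<0$ ($E$ has a single real place, $t$ being real while its conjugates $\omega v+\omega^{2}v'$ and $\omega^{2}v+\omega v'$ are not), we conclude $D(E)=D(F)$.

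I expect the crux to be the ramification analysis above $3$. The Kummer picture controls every prime away from $3$ but is silent at $3$, where $\varepsilon^{1/3}$ can genuinely ramify; it is exactly the assumption $3\mid d$ --- which renders $3$ unramified in $F$ and keeps the conductor of $\Z[\sqrt{-3d}]$ at $3$ or $6$ --- that allows the ring-class-field side to finish there. Nailing down that conductor, and the fact that a ramified cubic over $F_{\p}$ has conductor exponent $\geq 2$, are the points needing care; the rest is bookkeeping.
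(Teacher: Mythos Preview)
Your argument is correct. Note that what you call $L=EF$ is the paper's $F(t)$; the paper reserves $L$ for the larger field $F(v)$.

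The overall strategy---show that $F(t)/F$ is unramified, then read off $D(E)=D(F)$---is the same as the paper's, but the execution differs at both stages. For unramifiedness above $3$, the paper passes to the ring class field $J$ for the order $\Z[\sqrt{-m}]$ (conductor $1$ or $2$) and shows by a degree argument that $t$ already lies in $J$; since $3$ does not divide the conductor of that order, $F(t)/F$ is unramified at~$3$. Your route---$M\subset F_{(6)}$ forces the conductor exponent of $F(t)/F$ at each $\p\mid 3$ to be at most $1$, while wild cubic ramification would require exponent at least $2$---is a clean local alternative that avoids introducing $J$. For the passage from ``$F(t)/F$ unramified'' to $D(E)=D(F)$, the paper works through the sextic tower $k\subset K\subset L$ (in its notation), computing $D(K/k)$ prime by prime via Lemma~2.3 and solving for an unknown exponent at $2$; your use of the Artin conductor formula $\mathfrak f\bigl(\operatorname{Ind}_{\Gal(F(t)/F)}^{\Gal(F(t)/\Q)}\psi\bigr)=D(F)\cdot\Nm_{F/\Q}\mathfrak f(\psi)$, combined with $\operatorname{Ind}_{\Gal(F(t)/E)}^{\Gal(F(t)/\Q)}\mathbf 1\cong\mathbf 1\oplus\rho$, bypasses all of that. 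Your approach is shorter; the paper's yields extra information (the explicit values of $D(K)$, $D(K/k)$, and the ramification pattern in $K$) that it reuses for Theorems~1.3--1.5.
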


Remarks (2A) and (3B) show that $v \in M$ implies that 
$3$ divides the class number of $F$.  However,
the converse is false; the smallest counterexample when $3 \nmid d$
is $d=142$, while the smallest counterexample when $3 \mid d$ is
$d=786$.  See \cite{HC, I,  KM, XC} for examples of infinite families of $d$ 
for which $3$ divides the class number of $F$.

When $v \notin M$, then in contrast with the theorems above, 
the discriminant of $\Q(t)$
does not equal $D(F)$.
In fact, when $v \notin M$,  
Theorems 1.3 and 1.4 show that $D(\Q(t))$
equals $9D(F)$ or $81D(F)$ according as $3 \nmid d$ or $3 \mid d$.
The proofs are given in Sections 4 and 5.
\begin{thm}\label{Thm 1.3}
Suppose that $3 \nmid d$ and $v \notin M$.   Then the field $E=\Q(t)$
has discriminant $9D(F)$.
\end{thm}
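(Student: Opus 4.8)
The plan is to express $D(E)/D(F)$ through the relative discriminant of the cyclic cubic field $\tilde E=F(t)$ and then to compute that relative discriminant by a local analysis at the prime $3$; the same argument will also reprove Theorem~\ref{Thm 1.1}. Since $\epsilon:=\Nm_{\Q(\sqrt d)/\Q}(a+b\sqrt d)=a^2-b^2d\in\{1,-1\}$ and $vv'=\epsilon$, the number $t=v+v'$ is a root of
\[
 f(x)=x^3-3\epsilon x-2a ,
\]
whose discriminant equals $108(\epsilon-a^2)=-108\,b^2 d=(6b)^2(-3d)$. Hence $\sqrt{\operatorname{disc} f}\in F=\Q(\sqrt{-3d})$, so $E=\Q(t)$ is a cubic field with $D(E)<0$ (one real, two complex places), $\tilde E:=F(t)$ is the Galois closure of $E/\Q$, and $\tilde E/F$ is cyclic of degree $3$. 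Irreducibility of $f$ over $\Q$ is automatic: a rational root would have to be the unique real root $t$, forcing $v$ to be at most quadratic over $\Q$ and hence $a+b\sqrt d$ to be a cube in $\Q(\sqrt d)$. The conductor--discriminant formula for the $S_3$-extension $\tilde E/\Q$ gives $D(\tilde E)=\pm D(F)D(E)^2$, while the tower $\Q\subset F\subset\tilde E$ gives $D(\tilde E)=\pm D(F)^3\Nm_{F/\Q}\big(D(\tilde E/F)\big)$; comparing, $\Nm_{F/\Q}\big(D(\tilde E/F)\big)=(D(E)/D(F))^2$. As $D(E)$ and $D(F)$ are both negative, it suffices to prove $\Nm_{F/\Q}\big(D(\tilde E/F)\big)=81$.

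Next I would locate where $\tilde E/F$ can ramify and identify the meaning of $v\notin M$. Set $K_1=\Q(\sqrt d,\omega)=\Q(\sqrt d,\sqrt{-3})$ and $\eta=a+b\sqrt d$. One checks that $F(v)=K_1(v)=K_1(\eta^{1/3})$ (indeed $\sqrt d\in\Q(v)$ and $\sqrt{-3}=\sqrt{-3d}/\sqrt d\in F(v)$), that $F(v)/K_1$ is a cyclic cubic Kummer extension, and that $F(v)=\tilde E\cdot K_1$ with $\tilde E\cap K_1=F$. Because $\eta$ is a unit, $K_1(\eta^{1/3})/K_1$ is unramified away from $3$, and since $[\tilde E:F]=3$ and $[K_1:F]=2$ are coprime, $\tilde E/F$ is then unramified outside the unique prime $\p_3$ of $F$ above $3$ (there $3\OO_F=\p_3^2$ and $\Nm_{F/\Q}\p_3=3$). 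A short ramification check at $2$, at $3$, and at the primes dividing $d$ shows $K_1/F$ is everywhere unramified, so $K_1\subseteq H_F\subseteq M$; hence $v\in M\iff F(v)\subseteq M\iff\tilde E\subseteq M$. Finally $M$ is the ring class field of conductor $\mathfrak c\in\{(1),(2)\}$, which is prime to $3$, whereas the conductor $\mathfrak f(\tilde E/F)$ is a power of $\p_3$; therefore $\tilde E\subseteq M$ if and only if $\mathfrak f(\tilde E/F)=(1)$, i.e.\ if and only if $\tilde E/F$ is unramified. Consequently $v\notin M$ if and only if $\tilde E/F$ is ramified at $\p_3$.

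It then remains to show the conductor of $\tilde E/F$ at $3$ is exactly $\p_3^2$. For a cyclic cubic extension $D(\tilde E/F)=\mathfrak f(\tilde E/F)^2$ with $\mathfrak f(\tilde E/F)=\p_3^n$; ramification at $\p_3\mid3$ in a cubic extension is wild, so $n=0$ or $n\ge2$, and I claim $n\le2$. Since $K_1/F$ is unramified, $\mathfrak f(\tilde E/F)\OO_{K_1}=\mathfrak f(F(v)/K_1)$, so the claim is local: with $L=(K_1)_{\PP}$ for $\PP\mid3$ (a $3$-adic field containing $\omega$, with $v_L(3)=2$), I must bound the conductor of $L(\eta^{1/3})/L$. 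The key point is that the unit $\eta$ is automatically a cube in $(\OO_L/\p_L^2)^{\times}$: if $d\equiv1\pmod3$ then $L=\Q_3(\omega)$ and $\eta\in\Z_3^{\times}$ satisfies $\eta\equiv\pm1\pmod3$, where $\{\pm1\}$ is the cube subgroup of $(\OO_L/\p_L^2)^{\times}\cong C_6$; if $d\equiv2\pmod3$ then $L$ is the unramified quadratic extension of $\Q_3(\omega)$, the image of $\eta$ lies in the copy of $\OO_{\Q_3(\sqrt d)}/3\cong\mathbb F_9$ sitting inside $\OO_L/\p_L^2$, and that copy is precisely the Teichm\"{u}ller $=$ cube subgroup of $(\OO_L/\p_L^2)^{\times}\cong C_8\times C_3^2$. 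Hence $\eta\in(1+\p_L^2)(L^{\times})^{3}$, and the standard bound on conductors of degree-$p$ Kummer extensions (here $p=3$, $v_L(3)=2$) gives $\mathfrak f\big(L(\eta^{1/3})/L\big)\mid\p_L^2$. Thus $n\le2$, so $n\in\{0,2\}$ and $\Nm_{F/\Q}\big(D(\tilde E/F)\big)\in\{1,81\}$.

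Combining the last two steps, if $v\notin M$ then $\tilde E/F$ is ramified at $\p_3$, so $n=2$, so $\Nm_{F/\Q}\big(D(\tilde E/F)\big)=3^4=81$, so $D(E)^2=81\,D(F)^2$, and hence $D(E)=9\,D(F)$ by the sign normalisation; the complementary case $n=0$ reproves Theorem~\ref{Thm 1.1}. The crux is the local computation of the third paragraph: one must recognise that it is the \emph{unit} property of $\eta=a+b\sqrt d$, rather than its being a fundamental unit, that excludes the a priori possible conductor $\p_3^3$ -- precisely the exclusion that confines $\Nm_{F/\Q}\big(D(\tilde E/F)\big)$ to $\{1,81\}$ rather than also permitting $3^6$ -- together with the class-field-theoretic bookkeeping matching ``$v\notin M$'' with ramification of $\tilde E/F$ at $\p_3$, which rests on the ring class conductor being prime to $3$ and on $K_1/F$ being everywhere unramified.
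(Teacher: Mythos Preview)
Your proof is correct and takes a genuinely different route from the paper's.

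The paper works through the tower $\Q\subset C\subset K\subset L$ and its companion $\Q\subset F\subset F(t)\subset L$. It first bounds $D(K/C)$ by the elementary observation that it divides $27v^6$, deduces $D(L/B)$ has norm $3^8$ by comparing with $D(L/K)$, obtains $D(F(t))=3^4D(F)^3$ via $D(L)=D(F(t))^2$, and then computes $D(F(t)/k)$ by a prime-by-prime analysis (including a case split on $d\bmod 4$ to handle the prime~$2$). Only at the very end does $D(k)$ emerge.

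Your argument short-circuits almost all of this by invoking the conductor--discriminant formula for the $S_3$-extension $\tilde E/\Q$, which gives $(D(E)/D(F))^2=\Nm_{F/\Q}D(\tilde E/F)$ in one stroke. This reduces everything to computing the conductor of the cyclic cubic $\tilde E/F$ at the single prime $\p_3$, and you do that by passing to the Kummer side over $K_1$ and using the standard Hilbert-symbol bound once you have checked that $\eta$ is a cube modulo $\p_L^2$. Your explicit description of $\OO_L/3$ as $\mathbb F_q[\pi]/(\pi^2)$ (with $q=3$ or $9$) makes the cube claim transparent, since $(x+y\pi)^3=x^3$ there. The identification ``$v\notin M\Leftrightarrow \tilde E/F$ ramified at $\p_3$'' is also cleaner than the paper's treatment, which reaches the analogous conclusion only implicitly through Lemmas~4.2--4.3.

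What each approach buys: the paper's method is more elementary (no local class field theory or Hilbert symbols; the key bound comes from $D(K/C)\mid 27$) and yields the auxiliary data in Lemma~4.4 and Remark~(4F) along the way. Your method is conceptually tighter, avoids the $d\bmod 4$ case analysis entirely, and proves Theorems~1.1 and~1.3 simultaneously as the $n=0$ and $n=2$ alternatives. Your closing remark that only the \emph{unit} property of $\eta$ (not fundamentality) is used to exclude $n=3$ is a nice observation that the paper does not make explicit.
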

\begin{thm}\label{Thm 1.4}
Suppose that $3 \mid d$ and $v \notin M$.   Then the field $E=\Q(t)$
has discriminant $81D(F)$.
\end{thm}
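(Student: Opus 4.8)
The plan is to determine $D(E)$ through the ramification of the Galois closure $L:=EF$ of $E/\Q$ over $F$. Since $vv'=\epsilon:=\Nm(a+b\sqrt{d})\in\{1,-1\}$ and $v^3+v'^3=2a$, the element $t=v+v'$ is a root of
\[
f(x)=x^3-3\epsilon x-2a,
\]
and $\operatorname{disc}(f)=108(\epsilon^3-a^2)=108(\epsilon-a^2)=-108\,db^2$; writing $d=3d'$ with $3\nmid d'$ and using $a^2-db^2=\epsilon$, this equals $-2^2\cdot 3^4\cdot d'\,b^2<0$. Hence $f$ is irreducible over $\Q$ with one real root, $E=\Q(t)$ is a cubic field, and its Galois closure is $L=E(\sqrt{\operatorname{disc}(f)})=E(\sqrt{-3d})=EF$, with $\Gal(L/\Q)\cong S_3$ and $F$ the unique quadratic subfield; in particular $L/F$ is cyclic of degree $3$. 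The conductor--discriminant formula for $S_3$ gives $|D(L)|=|D(F)|\,|D(E)|^2$ (the standard $2$-dimensional character has conductor $|D(E)|$, and the sign character corresponds to $F$), while the discriminant tower formula gives $|D(L)|=\Nm_{F/\Q}\!\big(D(L/F)\big)\,|D(F)|^{3}$; comparing,
\[
|D(E)|=|D(F)|\cdot\Nm_{F/\Q}\!\big(D(L/F)\big)^{1/2}.
\]
Since $D(E)$ and $D(F)$ are both negative, Theorem~\ref{Thm 1.4} is equivalent to $\Nm_{F/\Q}\big(D(L/F)\big)=3^{8}$.

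I would then localize the ramification of $L/F$. Over $\Q(\sqrt{d},\sqrt{-3})=F(\sqrt{-3})$ the field $F(v)=F(\sqrt{-3})\big(\sqrt[3]{a+b\sqrt{d}}\big)$ is a Kummer cubic extension generated by the fundamental unit of $\Q(\sqrt{d})$, hence unramified outside the primes above $3$; and $F(\sqrt{-3})/F$ is unramified outside $3$ since $-3\equiv 1\pmod 4$. Thus $F(v)/F$, and a fortiori $L/F$, is unramified outside the primes $\p\mid 3$ of $F$, while $3$ is unramified in $F$ because $3\nmid d'$. Since the Hilbert class field $H_F$ is contained in $M$, the hypothesis $v\notin M$ forces $L\not\subseteq H_F$, so $L/F$ is genuinely ramified — necessarily at some $\p\mid 3$, and wildly, as $[L:F]=3$ equals the residue characteristic. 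Because $L/F$ is Galois, every $\p\mid 3$ then ramifies (totally, since $C_3$ has no proper nontrivial inertia), all with the same conductor exponent $n\ge 2$; the local conductor--discriminant formula for a cyclic cubic extension then makes $\p^{2n}$ the exact power of $\p$ in $D(L/F)$, and since $\sum_{\p\mid 3}f(\p/3)=2$ whether $3$ splits or is inert in $F$, one gets $\Nm_{F/\Q}\big(D(L/F)\big)=3^{4n}$. So the theorem reduces to the claim $n=2$.

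Proving $n=2$ — that the wild ramification at $3$ is as mild as possible — is the main obstacle. I would settle it by the concrete local computation at $3$ that also underlies Theorems~\ref{Thm 1.2} and~\ref{Thm 1.3}, namely by pinning down $v_3\big([\OO_E:\Z[t]]\big)$ (with $v_3$ the $3$-adic valuation), which is tied to $D(E)$ by $\operatorname{disc}(f)=[\OO_E:\Z[t]]^2\,D(E)$. From $a^2\equiv\epsilon\pmod 3$ one has $3\nmid a$, so $f\equiv(x+a)^3\pmod 3$, and the shifted polynomial is
\[
f(x-a)=x^3-3a\,x^2+9d'b^2\,x+3a(\epsilon-c),\qquad c:=\tfrac{a^2+2}{3}\in\Z,
\]
whose Newton polygon at $3$ — with vertices governed by $v_3(b)$ and $v_3(\epsilon-c)$ — after a short case analysis (Dedekind's criterion, and Ore's method for the inconclusive edge) shows that $v_3\big([\OO_E:\Z[t]]\big)$ equals $v_3(b)$ unless a certain congruence on $a$ modulo $27$ holds, in which case it equals $v_3(b)+2$. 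The byproduct congruences of the paper identify the exceptional alternative precisely with $v\in M$; hence under $v\notin M$ we get $v_3\big([\OO_E:\Z[t]]\big)=v_3(b)$, so $v_3(D(E))=v_3(\operatorname{disc}(f))-2v_3(b)=4$, i.e.\ (as $3\nmid D(F)$) $n=2$. Finally, at the primes other than $3$ — those dividing $d'$, the prime $2$, and primes dividing $b$ — the analysis does not depend on whether $v\in M$ and is carried out exactly as in the proof of Theorem~\ref{Thm 1.2}; combined with $n=2$, this yields $D(E)=81\,D(F)$.
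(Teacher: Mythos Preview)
Your reduction via the conductor--discriminant formula for $S_3$ is correct and in fact cleaner than the paper's route: once $\Nm_{F/\Q}\bigl(D(L/F)\bigr)=3^{8}$ is known (with $L=F(t)$), the identity $|D(E)|=|D(F)|\cdot\Nm(D(L/F))^{1/2}$ finishes the theorem outright, so your closing paragraph about ``primes other than $3$'' is redundant. The paper instead computes $D(F(t)/k)$ prime by prime and combines it with $D(F(t))$.

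There is, however, a genuine gap at ``$v\notin M$ forces $L\not\subseteq H_F$.'' Your $L=F(t)$ does not contain $v$, so $L\subseteq H_F\subseteq M$ yields only $t\in M$, not $v\in M$. One must add that $w\in M$ (so that $F(v)=F(t)(w)\subseteq M$ would follow), or give an Artin-symbol argument as in the paper's Lemma~5.2; either way this step is not automatic. A smaller slip: ``because $L/F$ is Galois, every $\p\mid 3$ ramifies'' is not the right reason, since distinct primes of $F$ over $3$ need not be $\Gal(L/F)$-conjugate; you want $L/\Q$ Galois together with $3$ unramified in $F$.

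The main problem is your argument for $n=2$. Appealing to ``the byproduct congruences of the paper'' means Theorem~6.1, whose proof relies on Lemma~5.3, which is exactly the statement $\Nm\bigl(D(F(t)/F)\bigr)=3^{8}$ you are trying to prove---so as written this is circular. You could attempt to break the circle by completing the Newton-polygon case analysis and then invoking Herz's criterion directly (which characterizes unramifiedness of $F(t)/F$ by congruences on $a$, independently of this paper), but that is a substantial detour. The paper's argument is far simpler: since $F(t)/F$ is cyclic of odd prime degree $3$, its relative discriminant is the \emph{square} of an ideal (the conductor squared); combined with the standard bound $3\le s\le 5$ on the different exponent of a wildly, totally ramified cubic over a base where $3$ is unramified, evenness forces $s=4$, i.e.\ $n=2$. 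This one-line parity observation replaces your entire proposed local computation.
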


The four proofs will use the following notation:
$w = (-1 + \sqrt{-3})/2$,
$C=\Q(\sqrt{d})$,  $B=\Q(\sqrt{-3},\sqrt{d})=F(\sqrt{d})=F(w)$,
$k=\Q(t)$, $K=\Q(v)=k(\sqrt{d})$, and
$L = F(v) = \Q(w,v) =K(w)$.
Note that $|L:B|=|K:C|=|k:\Q|=|F(t):F|=3$.
We remark in passing that by the Scholz reflection principle,
$3$ divides the class number of $F$
whenever $3$ divides the class number of $C$ \cite[Theorem 5]{CG}.
The Galois extension $L/F$ is cyclic of degree 6.
As a byproduct of the proofs, Theorem 1.5 gives an
evaluation of the norm of $D(L/F)$;  for a generalization, see Conjecture 7.7.
\begin{thm}\label{Thm 1.5}
When $v \in M$, the
norm of $D(L/F)$
equals $1$ or $3^6$ according as $3\nmid d$ or $3 \mid d$;
and when $v \notin M$, the norm of $D(L/F)$ equals $3^8$ or $3^{18}$
according as $3\nmid d$ or $3 \mid d$.  
\end{thm}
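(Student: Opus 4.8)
The plan is to compute $\Nm_{F/\Q}D(L/F)$ by applying the conductor--discriminant formula to the cyclic sextic extension $L/F$, thereby reducing everything to the two discriminants $D(F)$ and $D(k)=D(\Q(t))$ --- the latter supplied by Theorems~\ref{Thm 1.1}--\ref{Thm 1.4} --- together with a short local analysis at the primes above $3$. First I would identify the intermediate fields of $L/F$. Being cyclic of degree $6$, it has a unique quadratic subfield, namely $B=F(w)$, and a unique cubic subfield; I claim the latter is $Z:=F(t)=Fk$, the Galois closure of $k=\Q(t)$, with $\Gal(Z/\Q)\cong S_3$. Indeed $t$ is a root of the integral cubic $x^3-3\varepsilon\, x-2a$, where $\varepsilon:=vv'\in\{\pm1\}$, of discriminant $-108\, b^2 d=-3d\,(6b)^2$; since $-3d$ is not a square, $k$ is a non-cyclic cubic field whose quadratic resolvent is $\Q(\sqrt{-3d})=F$, so its Galois closure is the $S_3$-sextic $F(t)$, and this field lies in $L$ and contains $F$, hence is the cubic subfield $Z$. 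Let $\chi_3$ (order $2$), $\chi_2$ (order $3$) and $\chi_1=\chi_2\chi_3$ (order $6$) be characters of $\Gal(L/F)$ cutting out $B$, $Z$ and $L$. Grouping the six characters of $\Gal(L/F)$, the conductor--discriminant formula gives
\[
D(L/F)=D(B/F)\cdot D(Z/F)\cdot\mathfrak{f}(\chi_1)^2,\qquad D(Z/F)=\mathfrak{f}(\chi_2)^2,
\]
where $\mathfrak{f}(\,\cdot\,)$ is the Artin conductor; each of the three factors is supported only on primes above $3$.

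Next I would evaluate the three factors. For $D(B/F)$: the quadratic character $\chi_3$ cutting out $B=F(\sqrt{-3})=F(\sqrt d)$ over $F$ is simultaneously the restriction of the quadratic characters attached to $-3$ and to $d$, of conductors dividing $3$ and $4d$ respectively; when $3\nmid d$ these are coprime, so $\chi_3$ is everywhere unramified and $D(B/F)=(1)$, whereas when $3\mid d$ the field $F=\Q(\sqrt{-d/3})$ is unramified at $3$, so $B=F(\sqrt{-3})$ is tamely ramified at each prime above $3$ with discriminant exponent $1$ and $\Nm_{F/\Q}D(B/F)=9$. For $D(Z/F)$: the $S_3$ conductor--discriminant relation $|D(Z)|=|D(F)|\cdot D(k)^2$, combined with $|D(Z)|=\Nm_{F/\Q}D(Z/F)\cdot|D(F)|^{3}$, gives $\Nm_{F/\Q}D(Z/F)=\bigl(D(k)/D(F)\bigr)^2$; by Theorems~\ref{Thm 1.1}--\ref{Thm 1.4} the ratio $D(k)/D(F)$ equals $1$ when $v\in M$, equals $9$ when $v\notin M$ and $3\nmid d$, and equals $81$ when $v\notin M$ and $3\mid d$, so $\Nm_{F/\Q}D(Z/F)$ is $1$, $3^4$ or $3^8$ accordingly. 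Since $Z/\Q$ and $F/\Q$ are both Galois, $Z/F$ ramifies either at all primes above $3$ or at none; the second alternative is exactly $D(Z/F)=(1)$, i.e.\ $D(k)=D(F)$, i.e.\ $v\in M$. Finally, for $\mathfrak{f}(\chi_1)$: away from $3$ both $\chi_2$ and $\chi_3$ are unramified, while at a prime $\p\mid 3$ the conductor exponent of $\chi_3$ is at most $1$ ($3$ being odd) and that of $\chi_2$ is either $0$ or $\ge 2$ (a ramified cubic character of a $3$-adic field is wildly ramified, as its ramification index $3$ cannot divide $q-1$ with $q$ a power of $3$); hence the conductor exponent of $\chi_1=\chi_2\chi_3$ at $\p$ equals that of $\chi_2$ when $\chi_2$ is ramified there, and that of $\chi_3$ otherwise. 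Consequently $\mathfrak{f}(\chi_1)=\mathfrak{f}(\chi_2)$ when $v\notin M$, and $\mathfrak{f}(\chi_1)=D(B/F)$ when $v\in M$.

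Finally I would assemble the pieces. When $v\in M$ one has $D(Z/F)=(1)$ and $\mathfrak{f}(\chi_1)=D(B/F)$, so $D(L/F)=D(B/F)^{3}$ and $\Nm_{F/\Q}D(L/F)=\bigl(\Nm_{F/\Q}D(B/F)\bigr)^{3}$, which is $1$ if $3\nmid d$ and $9^{3}=3^{6}$ if $3\mid d$. When $v\notin M$ one has $\mathfrak{f}(\chi_1)=\mathfrak{f}(\chi_2)$ and $D(Z/F)=\mathfrak{f}(\chi_2)^2$, so $D(L/F)=D(B/F)\cdot D(Z/F)^{2}$ and $\Nm_{F/\Q}D(L/F)=\bigl(\Nm_{F/\Q}D(B/F)\bigr)\cdot\bigl(\Nm_{F/\Q}D(Z/F)\bigr)^{2}$, which is $1\cdot(3^{4})^{2}=3^{8}$ if $3\nmid d$ and $9\cdot(3^{8})^{2}=3^{18}$ if $3\mid d$. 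I expect the main obstacle to be the local work at $3$ underpinning the above: identifying $B$ and $Z$ as the two subfields, showing that $B/F$ is unramified exactly when $3\nmid d$, and establishing that a ramified cubic character over the $3$-adic completions of $F$ has conductor exponent at least $2$, so that it dominates the at-most-tamely-ramified $\chi_3$ inside $\chi_1$. Granting these local facts, the rest is routine bookkeeping with the conductor--discriminant formula and the values of $D(\Q(t))$ from Theorems~\ref{Thm 1.1}--\ref{Thm 1.4}.
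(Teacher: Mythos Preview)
Your argument is correct, but it runs in the opposite logical direction from the paper. In the paper, Theorem~1.5 is not deduced from Theorems~1.1--1.4 at all; rather, the four values of $\Nm_{F/\Q}D(L/F)$ are read off from the ramification lemmas that are proved \emph{en route} to Theorems~1.1--1.4 (Lemma~2.1 and Remark~(2A) for the case $v\in M$, $3\nmid d$; Lemma~3.1 and Remark~(3D) for $v\in M$, $3\mid d$; Lemma~4.4/equation~(4.4) and Remark~(4F) for $v\notin M$, $3\nmid d$; Lemma~5.3 and Remark~(5G) for $v\notin M$, $3\mid d$). Thus in the paper's development Theorem~1.5 is available \emph{before} Theorems~1.1--1.4 are finished, and is obtained by direct tower-by-tower ramification bookkeeping (e.g.\ computing $D(L/K)$, $D(L/B)$, $D(L/F(t))$ separately in each case). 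Your route instead treats Theorems~1.1--1.4 as input, feeds the single number $D(k)/D(F)$ into the $S_3$ relation $|D(F(t))|=|D(F)|\,D(k)^2$ to get $\Nm D(Z/F)$, and then assembles $D(L/F)$ via the conductor--discriminant formula for the cyclic sextic, the only extra work being the clean local observation that at primes over $3$ the order-$2$ character has conductor exponent at most $1$ while a ramified order-$3$ character has exponent at least $2$, so the order-$6$ character inherits the larger of the two. This is a genuinely different and more structural packaging: it replaces four separate case analyses by one uniform formula plus a single local lemma, at the cost of invoking Theorems~1.1--1.4 (so it cannot serve, as the paper's argument does, as an independent byproduct of the preparatory lemmas).
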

\begin{proof}
See Remarks
(2A), (3D), (4F), and (5G), respectively.
\end{proof}

The inclusion $v \in M$ is connected to cubic residuacity of 
$u = a + b \sqrt{d}  \pmod{p}$, 
where $p$ is any prime of the form $p=x^2 + 3dy^2$.
This is shown in Theorem 1.6 below.

When $d=2$, we have $\jac{d}{p}=1$.  When $d$ is odd,
one of $d,p$ is $1 \pmod{4}$, so that 
$\jac{d}{p}=\jac{p}{d} =1$.  Thus $d$ is a square $\pmod{p}$, so that
$u=a+b \sqrt{d}$ can be viewed as a rational integer $u_p \pmod{p}$.

\begin{thm}\label{Thm 1.6}
$v \in M$ if and only if 
$u_p$ is a cubic residue mod the primes $p=x^2 + 3dy^2$.
\end{thm}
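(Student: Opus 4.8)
The plan is to use class field theory to translate both sides of the claimed equivalence into the language of splitting behavior in the abelian extension $M/F$. Recall that $M$ is the ring class field for the order $\Z[\sqrt{-3d}]$, so by the main theorem of complex multiplication / ring class field theory, a prime $\p$ of $F$ (of degree $1$, unramified, prime to the conductor) splits completely in $M/F$ precisely when $\p$ is the extension of a rational prime $p$ that is represented by the principal form of discriminant $\disc(\Z[\sqrt{-3d}])$, i.e. $p = x^2 + 3dy^2$. So the rational primes $p = x^2+3dy^2$ are exactly those that split completely from $\Q$ all the way up to $M$.

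Next I would set up the sextic field $L = F(v) = \Q(w,v)$ and observe that $v \in M$ is equivalent to $K = \Q(v) \subseteq M$ (since $M$ contains $F$ but, being a ring class field over an imaginary quadratic field, $M \cap \R$ is small — one must check $M$ does not contain the real field $\Q(\sqrt d)$ unless forced, but $v$ generates a non-Galois cubic over $\Q$ so $\Q(v) \cap \R$ issue is moot: $v$ is real and the statement is literally about membership of the real number $v$ in the field $M \subset \overline{\Q}$ under a fixed embedding). The key point: $L/F$ is cyclic of degree $6$, its cubic subextension is $K(w)/F = L/F$'s unique... actually I would pin down that $v \in M \iff$ the cubic subfield $\Q(t) = k$ sits inside $M$ after adjoining $F$, i.e. $kF/F$ is a cubic subextension of $M/F$, and then compare Artin symbols.

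Then I would carry out the decisive computation: for $p = x^2 + 3dy^2$ with $\p \mid p$ in $F$, the prime $\p$ splits completely in $M$; I want to show $u_p$ being a cube mod $p$ is equivalent to $\p$ splitting completely in $L = F(v)$ — because adjoining $v = u^{1/3}$ to $F$ and asking how $\p$ splits is governed, via Kummer theory over $F(w)$ (note $w \in F$ already, since $\sqrt{-3} \in F$!), by whether $u$ is a cube modulo $\p$. Since $p = x^2+3dy^2 = \Nm(\p)$ and $u = u_p \bmod p$ corresponds to $u \bmod \p$, the condition "$u$ is a cube mod $\p$" is exactly "$u_p$ is a cube mod $p$". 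So $u_p$ is a cubic residue mod $p$ for all such $p$ $\iff$ every $\p$ over such $p$ splits completely in $L/F$ $\iff$ $L \subseteq M'$ where... and by comparing with "all such $\p$ split completely in $M$", together with the fact that $L/F$ and $M/F$ are both abelian and one checks $kF \subseteq M \iff L \subseteq \widetilde M$ appropriately, one concludes $v \in M \iff u_p$ is a cube mod $p$. One should be careful to run this for a single $p$ or for a density-one / Chebotarev argument: $v \in M \iff kF \subseteq M \iff$ (Chebotarev) the set of $\p$ splitting completely in $M$ is contained in the set splitting completely in $kF$, and the former set is exactly $\{\p \mid p : p = x^2+3dy^2\}$ while the latter is exactly $\{\p \mid p : u_p \text{ is a cube mod } p\}$ among degree-one primes.

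The main obstacle I anticipate is the bookkeeping around conductors and ramified/exceptional primes: the equivalence via Chebotarev gives the containment of Frobenius-unramified splitting sets, so I must ensure that $kF/\Q$ (equivalently $L/F$) is unramified outside the conductor of $M/F$ — this is precisely what Theorems 1.1–1.5 establish (the relative discriminant $D(L/F)$ is supported only on $3$, and $3 \mid \disc \Z[\sqrt{-3d}]$, so $L/F$ is unramified outside the conductor of the ring class field), so that no prime splitting completely in $M$ can ramify in $L$, making the Artin-symbol comparison clean. The other delicate point is the direction "$u_p$ cube mod $p$ for all $p = x^2+3dy^2$ $\Rightarrow v \in M$": here I must use that $M$ is the $\emph{maximal}$ abelian (ring class) extension with the given splitting set, so any abelian-over-$F$ extension in which all those $\p$ split completely is contained in $M$; since $L/F$ is abelian (indeed cyclic), once I know all such $\p$ split completely in $L$, I get $L \subseteq M$, hence $v \in M$. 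I would present this with explicit reference to the Artin map for $M/F$ and the Kummer pairing $\p \mapsto \left(\tfrac{u}{\p}\right)_3$ over $F$, noting $w \in F$ makes the Kummer theory available without base change.
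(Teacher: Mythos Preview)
Your overall strategy is exactly the paper's: translate both conditions into the splitting behaviour of the principal primes $\p = (x + y\sqrt{-3d})$ in the cyclic extension $L/F$, use Theorem~1.5 to guarantee $\p$ is unramified in $L$, and then invoke \cite[Thm.~8.19]{Cox} to pass between ``all such $\p$ split completely in $L$'' and ``$L \subset M$''. The architecture is right.

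The genuine gap is your parenthetical assertion that $w \in F$ ``since $\sqrt{-3} \in F$''. This is false: for squarefree $d>1$ one has $\sqrt{-3}\notin \Q(\sqrt{-3d})$, and $B = F(w)$ is an honest quadratic extension of $F$ (this is used throughout the paper, e.g.\ Lemma~4.2). Two things then collapse in your argument: cubic Kummer theory is \emph{not} available over $F$, and $u = a+b\sqrt d$ does not even lie in $F$, so ``$u$ is a cube modulo $\p$'' is ill-posed as you wrote it. The paper avoids this entirely by computing with the Frobenius in $L$: for $\sigma = \bigl(\tfrac{L/F}{\p}\bigr)$ and $\PP\mid\p$ in $L$, the defining congruence $\sigma(v)\equiv v^{p}\pmod{\PP}$ gives at once that $\sigma$ is trivial iff $v^{p-1}\equiv 1\pmod{\PP}$, i.e.\ iff $u_p^{(p-1)/3}\equiv 1\pmod p$. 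Your Kummer route can be repaired, but only by first observing that $p=x^2+3dy^2$ forces $p\equiv 1\pmod 3$, so that $\p$ splits in $B/F$, and then applying Kummer theory to the cubic extension $L/B$ at a prime of $B$ above $\p$; the shortcut through ``$w\in F$'' is not available.
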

\begin{proof}
Consider the principal prime ideal $\p=(x+y\sqrt{-3d})$ in $F$
of norm $p$.  By Theorem 1.5, $\p$ is unramified in $L$.
Let $\sigma$ denote the Artin symbol $\jac{L/F}{\p}$,
and let $\PP$ be a prime ideal in $L$ above $\p$.
Since $\sigma(v) \equiv v^p \pmod{\PP}$, we see that
$\sigma$ is trivial on $L$ if and only if 
$v^{p-1} \equiv 1 \pmod {\PP}$.
This last congruence is equivalent to 
$u_p^{(p-1)/3} \equiv 1 \pmod{p}$.
Thus $u_p$ is a cubic residue $\pmod{p}$ if and only if 
$\sigma$ is trivial on $L$.
It remains to show that $\sigma$ is trivial on $L$ if and only if
$L \subset M$.
We know that $\sigma$ is trivial on $L$ if and only if the primes $p$
split completely in $L$.  By \cite[Thm. 8.19]{Cox},
the primes $p$ split completely in $L$ if and only if
$L \subset M$, so the proof is complete.
\end{proof}

Let $M_c$ denote the ring class field of $F$ for the order
$\Z[\sqrt{-3dc^2}]$. Thus $M_1=M$ and $M_3$ is the
ring class field of $F$ for the order $\Z[\sqrt{-27d}]$.
Mimicking the proof of Theorem 1.6, we see that 
$v \in M_c$ if and only if
$u_p$ is a cubic residue mod the primes $p=x^2 + 3d(cy)^2$.

In Theorem 6.1, 
we give explicit criteria in terms of $a$ and $b$ for
$v=(a+b\sqrt{d})^{1/3}$ to lie in $M$,
where $a+b\sqrt{d}$ is the fundamental unit.
Theorem 6.2 shows that every $v$ lies in $M_3$.
Theorem 6.3 relates 
the ring class fields $M$ and $M_3$ to ray class fields of $F$.

In Section 7,  
we introduce a large class $S_d$ of integers $r+s\sqrt{d}$
with cubic norms, where $S_d$ properly contains the set of fundamental units
$a+b\sqrt{d}$.  Under certain conditions,
we extend Theorem 6.1 for elements in $S_d$.

A substantial generalization of Theorem 6.1 is given in
Section 8.
The proof makes no appeal to class field theory, but instead
relies wholly on the methods in \cite{S}.
As a corollary, we provide congruences for certain Lucas
numbers modulo primes $p=x^2 + 3dy^2$.

\section{Proof of Theorem 1.1}
Assume throughout this section that $v \in M$.
The proof of Theorem 1.1 utilizes the following five lemmas.  
\begin{lem}\label{Lemma 2.1}
When $3 \nmid d$, $L/F$ is unramified.
\end{lem}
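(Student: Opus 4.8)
The plan is to put $L$ inside a field that is unramified over $F$ outside the prime $2$, and then to dispose of the prime $2$ by a direct local computation.

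First I would record two standard facts about the ring class field $M$ of the order $\Z[\sqrt{-3d}]$ (see \cite{Cox}): it is Galois over $\Q$, with $\Gal(M/\Q)$ a generalized dihedral group; and the conductor of the abelian extension $M/F$ divides $f\OO_F$, where $f=[\OO_F:\Z[\sqrt{-3d}]]$ is the conductor of the order. Since $\mathrm{disc}(\Z[\sqrt{-3d}])=-12d$, $\mathrm{disc}(F)\in\{-3d,-12d\}$, and $f^2=\mathrm{disc}(\Z[\sqrt{-3d}])/\mathrm{disc}(F)$, we get $f\in\{1,2\}$; in particular $f$ is prime to $3$, and this is exactly where the hypothesis $3\nmid d$ is used. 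Next, since $v\in M$ and $M/\Q$ is Galois, $M$ contains the Galois closure of $\Q(v)/\Q$. That closure equals $L$: the minimal polynomial of $v$ over $\Q$ is $x^6-2ax^3+(a^2-b^2d)$, whose six roots are $w^iv$ and $w^iv'$ for $i=0,1,2$, and since $v'=\pm v^{-1}\in\Q(v)$ the splitting field is $\Q(w,v)=L$. Hence $L\subseteq M$, so $L/F$ is unramified at every prime of $F$ not lying above $2$.

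It remains to rule out ramification in $L/F$ at the primes above $2$, and for this I would use the explicit presentation $L=B(\sqrt[3]{u})$, where $u=a+b\sqrt d$ is the fundamental unit and $B=F(w)=\Q(\sqrt{-3},\sqrt d)$. First, $B/F$ is unramified at $2$: as $2$ is unramified in $\Q(\sqrt{-3})/\Q$, the inertia group of $2$ in $\Gal(B/\Q)\cong(\Z/2\Z)^2$ is contained in $\Gal(B/\Q(\sqrt{-3}))$, so it has order at most $2$ and restricts isomorphically onto the inertia group of $2$ in $\Gal(F/\Q)$; hence $2$ has the same ramification index in $B/\Q$ as in $F/\Q$, i.e.\ $B/F$ is unramified at $2$. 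Second, $u$ is a unit of $\OO_C\subseteq\OO_B$, so $\mathrm{disc}(x^3-u)=-27u^2$ is a unit at every prime of $B$ above $2$, whence the Kummer extension $L=B(\sqrt[3]{u})$ is unramified over $B$ at those primes. Combining the two, $L/F$ is unramified at $2$, and together with the previous paragraph this shows $L/F$ is unramified everywhere.

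The Galois-closure step and the index bookkeeping are routine. The part I expect to take the most care is the local analysis at $2$ --- verifying that $B/F$ is unramified at $2$ in every residue class of $d$ modulo $8$, and that the Kummer extension $B(\sqrt[3]{u})/B$ genuinely avoids ramification above $2$ --- and this is where the fact that $u$ is a unit is essential. Note also that the hypothesis $v\in M$ is used only to obtain $L\subseteq M$; the behaviour at $2$ follows from the structure $L=B(\sqrt[3]{\text{unit}})$ alone.
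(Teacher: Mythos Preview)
Your proof is correct and follows essentially the same strategy as the paper: place $L$ inside $M$ to get unramifiedness away from the conductor, then handle the prime $2$ by a direct tower argument exploiting that the cubic step is generated by the cube root of a unit. The only differences are cosmetic: the paper disposes of the $d\not\equiv 1\pmod 8$ cases first (where $M$ is already the Hilbert class field) and for the remaining case uses the tower $\Q\subset C\subset K\subset L$, whereas you treat all residues of $d$ uniformly via the tower $F\subset B\subset L$; both routes rest on the same observation that $x^3-u$ and $x^2+x+1$ have discriminants that are units at $2$. Your Galois-closure argument for $L\subset M$ is more than necessary (since $L=F(v)$ and $v\in M$ already), and your sentence ``restricts isomorphically onto the inertia group of $2$ in $\Gal(F/\Q)$'' would be cleaner stated as $I_B\cap\Gal(B/F)\subset\Gal(B/\Q(\sqrt{-3}))\cap\Gal(B/F)=1$, but these are matters of exposition rather than content.
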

\begin{proof}
By hypothesis, $L \subset M$. 
We may assume that $d \equiv 1 \pmod{8}$;
otherwise, by \cite[Thm. 7.24]{Cox},
$M$ is the Hilbert  class field of $F$ so that $L/F$
is unramified.  Under this assumption, 
the order $\Z[\sqrt{-3d}]$ has conductor $2$,
so it suffices to show that $2$ is unramified in $L$.
Consider the tower $\Q \subset C \subset K \subset L$.
Clearly $C/\Q$ is unramified at $2$.  
Also $K/C= \Q(v)/\Q(\sqrt{d})$ is unramified at $2$,
since the polynomial $x^3 - v^3$ has discriminant $-27v^6$.
Finally, $L/K= K(w)/K$ is unramified at $2$, since the polynomial
$x^2+x+1$ has discriminant $-3$.
\end{proof}

\begin{rem}[\bf{2A}]
Let $3 \nmid d$.
Since $L/F$ is a cyclic unramified extension of degree $6$
when $3 \nmid d$ by Lemma 2.1,
the class number of $F$ is divisible by $6$, and
$D(L/F)$ has (absolute) norm $1$.   This is in contrast with
the case $3 \mid d$; see Remark (3D).
\end{rem}

\begin{lem}\label{Lemma 2.2}
When $3 \nmid d$, $D(L/K)$ has norm $9$.
\end{lem}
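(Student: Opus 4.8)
The plan is to pin down exactly which primes of $K$ above $3$ ramify in $L=K(w)$, record their residue degrees, and read off the norm of $D(L/K)$ from that.

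\emph{Reduction.} Since $w$ generates $L$ over $K$ and satisfies $x^2+x+1$, whose discriminant is $-3$, the relative discriminant $D(L/K)$ divides $(3)\OO_K$; in particular it is supported on primes $\q\mid 3$, which re-proves that $L/K$ is unramified at $2$ (as in Lemma 2.1). At each such $\q$ the degree $[L:K]=2$ is prime to the residue characteristic $3$, so a ramified $\q$ is tamely ramified with different exponent exactly $1$ and contributes the factor $\q$ to $D(L/K)$. Hence $D(L/K)=\prod\q$ over the ramified $\q\mid 3$, and its norm is $3^{\sum f(\q\mid 3)}$. So it suffices to show that the ramified primes $\q\mid 3$ have residue degrees summing to $2$.

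\emph{A local criterion, then the factorization of $3$.} First I would check that a prime $\q\mid 3$ of $K$ ramifies in $L=K(\sqrt{-3})$ if and only if $e(\q\mid 3)$ is odd: the $\q$-adic valuation of $-3$ equals $e(\q\mid 3)$, so if it is odd the quadratic extension is tamely ramified, while if it is even we may write $3=\varepsilon\pi_\q^{e}$ with $\varepsilon$ a unit and reduce to adjoining $\sqrt{-\varepsilon}$, which over a local field of odd residue characteristic is trivial or unramified. Next I would factor $3$ in $K=C(u^{1/3})$, $C=\Q(\sqrt d)$. Since $3\nmid d$, the prime $3$ is unramified in $C$; fix a prime $\q_C\mid 3$ of $C$ of residue degree $f_0$, so $C_{\q_C}=\Q_{3^{f_0}}$, and study the algebra $C_{\q_C}[x]/(x^3-u)$, whose factors are the completions $K_\q$ for $\q\mid\q_C$. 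The key arithmetic fact is that $\mathbb F_{3^{f_0}}$ has order prime to $3$, so cubing is a bijection on it, and hence $\zeta_3$ lies neither in $C_{\q_C}$ nor in its unramified cubic extension. If $u$ is a cube in $C_{\q_C}$, say $u=c^3$, then $x^3-u=(x-c)(x^2+cx+c^2)$ with the quadratic irreducible, so $C_{\q_C}[x]/(x^3-u)\cong C_{\q_C}\times C_{\q_C}(\sqrt{-3})$; there are then two primes $\q\mid\q_C$ of $K$, both of residue degree $f_0$, with $e(\q\mid 3)$ equal to $1$ and $2$ respectively, and by the criterion only the first ramifies in $L$, contributing $f_0$. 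If $u$ is not a cube in $C_{\q_C}$, then $x^3-u$ is irreducible, so $\q_C$ has a single prime $\q$ above it in $K$ with $K_\q=C_{\q_C}(u^{1/3})$; this cubic extension is ramified, hence (residue characteristic $3$) totally ramified, so $e(\q\mid 3)=3$ and $f(\q\mid 3)=f_0$, and by the criterion $\q$ ramifies in $L$, again contributing $f_0$. Either way $\q_C$ contributes exactly $f(\q_C\mid 3)$, and summing over all $\q_C\mid 3$ gives $[C:\Q]=2$, so $D(L/K)$ has norm $3^2=9$.

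\emph{Main obstacle.} The delicate point is the claim that $C_{\q_C}(u^{1/3})$ is ramified rather than unramified in the second case, since this is what forces $e(\q\mid 3)$ to be odd there. To settle it I would argue that if $C_{\q_C}(u^{1/3})$ were the (unique) unramified cubic extension of $C_{\q_C}$, then $u$ would be a cube there, say $u=z^3$; applying the Frobenius $\sigma$ gives $(z/\sigma(z))^3=1$, so $z/\sigma(z)\in\mu_3$ of that field, which is trivial because $\zeta_3$ is absent, whence $z\in C_{\q_C}$ and $u$ is already a cube in $C_{\q_C}$ — a contradiction. Everything else is routine bookkeeping with completions and discriminants; this is the one spot where being in residue characteristic $3$ (so that the residue fields contain no non-cubes) is essential.
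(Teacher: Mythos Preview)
Your argument is correct and takes a genuinely different route from the paper's. The paper works globally: it invokes Lemma~2.1 (so the standing hypothesis $v\in M$ is used) to force every prime of $L$ above $3$ to have ramification index exactly $2$ over $\Q$, and it uses the discriminant $-108db^2$ of the minimal polynomial of $t$ to see that $3$ ramifies in $k$, hence in $K$. From these two global facts it reads off which primes of $K$ above $3$ ramify in $L$ and obtains norm $9$. Your argument, by contrast, is purely local: you never touch Lemma~2.1 or $t$, but instead analyse $x^3-u$ over each completion $C_{\q_C}$ and combine it with the parity criterion for ramification of $K(\sqrt{-3})/K$. The payoff is that your proof does not use $v\in M$ at all, so it actually shows that $D(L/K)$ has norm $9$ whenever $3\nmid d$; this is consistent with the paper's later computation (see \eqref{4.4}) in the case $v\notin M$. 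The cost is the small extra step you flagged---ruling out that $C_{\q_C}(u^{1/3})$ is the unramified cubic extension---which your Frobenius/$\mu_3$ argument handles cleanly, since any unramified extension of $\Q_3$ lacks primitive cube roots of unity.
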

\begin{proof}
We need only examine the ramification at $3$,
since the polynomial
$x^2+x+1$ has discriminant $-3$.
Note that $3$ ramifies in $F$ but there can be no further ramification
at $3$ in $L/F$ by Lemma 2.1.  Thus in the factorization of $(3)$
in $L$, every prime ideal occurs to the second power.

The minimal polynomial of $t=v+v'$ over $\Q$ is $x^3 -3 \epsilon x  -2a$,
where $\epsilon$ is the norm of the fundamental unit $v^3 = a+b\sqrt{d}$.
This polynomial has discriminant $-108 d b^2$, in which the exponent
of the factor $3$ is odd.  By \cite[Prop. 2.13]{N},
$3$ divides $D(k)$, so
$3$ must ramify in $k$ and in $K$.

The factorization of $(3)$ in $C$ is either
$\q$ or $\p \p'$,  where $\q$ has norm $9$
and $\p$, $\p'$ each have norm $3$.
Since $3$ ramifies in $K$, we have in $K$ either 
the prime ideal factorization $(\q) =\QQ_1^2 \QQ $ 
where the prime factors have norm $9$, or
$(\p \p') =\PP \PP_1^2 \PP' \PP_1'^2$ where the prime factors have norm $3$.
In the first case, $\QQ$ is the factor that ramifies in $L$,
while in the second case, $\PP$ and $\PP'$ are the factors that 
ramify in $L$.   Since the ramification is tame,
$D(L/K)$ equals $\QQ$ or $\PP \PP'$, and in either case
$D(L/K)$ has norm 9.
\end{proof}

\begin{lem}\label{Lemma 2.3}
When $q \ne 3$ is a rational prime, $(q) \nmid D(K/k)$ in $k$.
\end{lem}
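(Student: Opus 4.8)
The plan is to treat $K/k$ as the quadratic extension it is, and to show that above each rational prime $q\neq 3$ it is ``unramified enough'' that $(q)\nmid D(K/k)$. Two generators are at hand: $K=k(v)$, where $v$ is an algebraic unit with $v^2-tv+\epsilon=0$ over $k$, and $K=k(\sqrt d)$; hence $D(K/k)$ divides both $(t^2-4\epsilon)$ and $(4d)$. I would also record once and for all that $K=C(v)$ with $v^3=u=a+b\sqrt d$ a \emph{unit} of $C=\Q(\sqrt d)$, so $\operatorname{disc}_{\OO_C}(x^3-u)=(-27u^2)=(3)^3$; therefore $K/C$ is unramified at every prime of $C$ not above $3$, and since $[C:\Q]=2$, every prime $\QQ$ of $K$ over a rational prime $q\neq 3$ satisfies $e(\QQ/q)=e(\QQ\cap\OO_C\,/\,q)\le 2$. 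Finally, $(q)\nmid D(K/k)$ is equivalent to the existence of one prime $\q$ of $k$ over $q$ with $v_{\q}(D(K/k))<e(\q/q)$.

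For $q\neq 2,3$ the argument is short. Since $K/k$ is quadratic and $q$ is odd, $K/k$ is at worst tamely ramified above $q$, so $v_{\q}(D(K/k))\le 1$ for every $\q\mid q$. Thus if $(q)\mid D(K/k)$ then $e(\q/q)=1$ for all $\q\mid q$ (so $q$ is unramified in $k$) and every such $\q$ ramifies in $K=k(\sqrt d)$; the latter forces $v_q(d)$ to be odd, i.e.\ $q\mid d$. But for $q\mid d$ with $q$ odd and $q\neq 3$ the minimal polynomial $x^3-3\epsilon x-2a$ of $t$ has discriminant $-108\,b^2 d$, whose $q$-adic valuation $2v_q(b)+1$ is odd; hence $v_q(D(k))$ is odd, so $q$ ramifies in $k$, a contradiction. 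Therefore $(q)\nmid D(K/k)$ whenever $q\neq 2,3$.

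The remaining case $q=2$ is where the real work lies, since $K/k$ can be wildly ramified above $2$ and the tame bound is lost. If $d\equiv 1\pmod 4$, then $2$ is unramified in $C$, hence (by the remark above) in $K$, hence in $K/k$, so $(2)\nmid D(K/k)$. If $d\not\equiv 1\pmod 4$, then $2$ ramifies in $C$, so $e(\QQ/2)=2$ for every prime $\QQ\mid 2$ of $K$; consequently any prime $\q\mid 2$ of $k$ with $e(\q/2)=2$ is automatically unramified in $K/k$ (from $2=e(\QQ/2)=e(\q/2)\,e(\QQ/\q)$) and gives $v_{\q}(D(K/k))=0<e(\q/2)$. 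Since $(2)=\q^3$ is incompatible with $e(\QQ/2)=2$, ramification of $2$ in $k$ automatically produces such a prime, so the whole lemma reduces to: \emph{$2$ ramifies in $k=\Q(t)$ whenever $d\not\equiv 1\pmod 4$ (and $3\nmid d$)}. When $d\equiv 2\pmod 4$ this is immediate, as $v_2(-108\,b^2 d)=3+2v_2(b)$ is odd, forcing $2\mid D(k)$. The case $d\equiv 3\pmod 4$ is the main obstacle: here the norm equation $a^2-b^2 d=\pm1$ forces $\epsilon=1$ and makes $a,b$ of opposite parity, so $t$ is a root of $x^3-3x-2a$ with $a^2-b^2 d=1$. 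I would then run Dedekind's criterion at $2$ (the reduction mod $2$ is $x(x+1)^2$): when $b$ is odd, $\Z[t]$ turns out to be already $2$-maximal, whence $v_2(D(k))=v_2(-108\,b^2 d)=2>0$; when $b$ is even (so $4\mid b$ and $\Z[t]$ is not $2$-maximal) one must instead analyze the $2$-adic factorization of $x^3-3x-2a$ (equivalently of $y^3-3y^2+2(1-a)$) by hand, and here I expect to need the standing hypothesis $v\in M$, fed in via Lemma~\ref{Lemma 2.1}: since $d\not\equiv 1\pmod 8$ in this case, $M$ is the Hilbert class field of $F$, hence $L/F$ is unramified, which constrains the splitting of $2$ in the tower $\Q\subset C\subset K\subset L$ enough to pin down its behaviour in $k$. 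This last $2$-adic sub-case is the only part that is not routine bookkeeping with ramification indices and the divisibilities $D(K/k)\mid(t^2-4\epsilon)$ and $D(K/k)\mid(4d)$.
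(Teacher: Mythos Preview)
Your approach has a genuine gap in the case $q=2$, $d\equiv 3\pmod 4$: you do not prove that $2$ ramifies in $k$, and your proposed fix is to invoke the standing hypothesis $v\in M$ via Lemma~\ref{Lemma 2.1}. This is problematic for two reasons. First, Lemma~\ref{Lemma 2.3} is invoked verbatim in the proofs of Theorems~\ref{Thm 1.3} and~\ref{Thm 1.4}, where the standing hypothesis is $v\notin M$; so a proof that relies on $v\in M$ would not suffice. Second, even within Section~2 the logic runs the other way: in the paper, the fact that $2$ ramifies in $k$ when $d\equiv 2,3\pmod 4$ is \emph{deduced from} Lemma~\ref{Lemma 2.3} (see the paragraph leading to (2.7)), so trying to establish that ramification first and then derive the lemma risks circularity.

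The paper's proof is entirely different and avoids all of this case analysis. It is a short algebraic argument inside $L$: if $(q)\mid D(K/k)$, then $q$ divides the $k$-basis discriminant $t^2-4\epsilon=v^2+v'^2-2\epsilon$; applying the Galois substitutions $v\mapsto wv$, $v'\mapsto w^2 v'$ and then $w\mapsto w^2$, and taking a suitable linear combination, one finds that $q$ divides $3wv^2$ in $\OO_L$, which is impossible for $q\neq 3$ since $v$ is a unit. This uses neither $v\in M$ nor $3\nmid d$, which is exactly why the lemma can be reused in Sections~3--5. Your argument for $q\neq 2,3$ is correct, but the paper's three-line conjugation trick handles all $q\neq 3$ uniformly.
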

\begin{proof}
Suppose for the purpose of contradiction that $(q)$ divides $D(K/k)$.
Since $q$ divides the discriminant of the $k$-basis $\{1,v \}$ in $K$,
$q$ must divide
\begin{equation}\label{2.1}
t^2-4 = v^2 + v'^2 -2.
\end{equation}
Replacing $v$ in (2.1) by its
conjugate $w v$ in $L$, we see that in $L$,  $q$ divides
\begin{equation}\label{2.2}
w^2 v^2 + w v'^2 -2.
\end{equation}
Subtracting, we see that $q$ divides
\begin{equation}\label{2.3}
(w^2-1) v^2 + (w-1) v'^2.
\end{equation}
Replacing $w$ by $w^2$ in (2.3), we see that $q$ divides
\begin{equation}\label{2.4}
(w-1) v^2 + (w^2-1) v'^2.
\end{equation}
Multiplying (2.3) by $w+1$ and then subtracting from (2.4),
we see that $q$ divides $3wv^2$, which is impossible, since $q \neq 3$.
This contradiction  proves the lemma.
\end{proof}

\begin{lem}\label{Lemma 2.4}
Suppose that $3 \nmid d$. 
The only rational primes that ramify in $L$ are the ones that divide
$D(F)$.  If the rational prime $q$ divides $D(C)$,
then each prime ideal in the factorizations of $(q)$
in $K$ and $L$ occurs to the second power.
\end{lem}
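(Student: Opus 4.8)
The plan is to obtain both assertions from Lemma~2.1 together with two elementary discriminant computations, exploiting the towers $\Q \subset F \subset L$ and $\Q \subset C \subset K \subset L$.

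For the first assertion I would argue directly. Suppose a rational prime $p$ ramifies in $L$, so $e(\PP/p)>1$ for some prime $\PP$ of $L$, and put $\p = \PP \cap \OO_F$. Since $e(\PP/p) = e(\PP/\p)\,e(\p/p)$ and Lemma~2.1 gives $e(\PP/\p)=1$, we get $e(\p/p)=e(\PP/p)>1$; hence $p$ ramifies in $F$, that is, $p \mid D(F)$.

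For the second assertion, the crucial observation is that $3 \nmid d$ forces $3 \nmid D(C)$, since $D(C)$ equals $d$ or $4d$; thus any rational prime $p \mid D(C)$ satisfies $p \neq 3$. As $C/\Q$ is quadratic and $p$ ramifies in $C$, we have $(p) = \p^2$ in $C$ with $e(\p/p)=2$. I would then bound the relative discriminants above $p$: the generator $v$ of $K/C$ is a root of $x^3 - (a+b\sqrt{d}) \in \OO_C[x]$, whose discriminant $-27(a+b\sqrt{d})^2$ generates the ideal $(27)$ of $\OO_C$ because $a+b\sqrt{d}$ is a unit, so $D(K/C)$ divides $(27)$ and $p$ is unramified in $K/C$; similarly $w$ is a root of $x^2+x+1$ of discriminant $-3$, so $D(L/K)$ divides $(3)$ and $p$ is unramified in $L/K$.

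It then remains to assemble the factorization. In $K$, the prime $\p$ is the unique prime of $C$ over $p$ and it factors into distinct primes $\QQ_1,\dots,\QQ_r$, so $(p) = \prod_j \QQ_j^2$ with every exponent equal to $2$; applying the same reasoning to $L/K$, each $\QQ_j$ factors into distinct primes $\PP_{jk}$ in $L$, whence $(p) = \prod_{j,k} \PP_{jk}^2$, again with every exponent $2$. I do not expect a genuine obstacle here; the only point requiring a little care is the standard passage from the discriminant of the order $\OO_C[v]$ (resp.\ $\OO_K[w]$) to the relative discriminant $D(K/C)$ (resp.\ $D(L/K)$), which divides it, so that ramification away from $3$ is controlled exactly as in Lemmas~2.1 and~2.2.
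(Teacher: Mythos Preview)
Your proof is correct and follows essentially the same approach as the paper: both use Lemma~2.1 for the first assertion, and both control ramification in $K/C$ via the discriminant $-27v^6$ of $x^3 - v^3$. The only cosmetic difference is that for the factorization of $(p)$ in $L$ the paper invokes Lemma~2.1 directly (using the tower $\Q \subset F \subset L$, noting that $p \mid D(C)$ implies $p \mid D(F)$), whereas you reach $L$ through the tower $\Q \subset C \subset K \subset L$ and the discriminant $-3$ of $x^2+x+1$; either route works and the underlying ingredients are identical.
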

\begin{proof}
If the prime $p$ divides $D(F)$, then $p$ ramifies in $F$ with ramification
index $2$. Then since $L/F$ is unramified by Lemma 2.1,  
each prime ideal in the factorization of $(p)$
in $L$ occurs to the second power.
If $p$ does not divide $D(F)$, then clearly $p$ does not ramify in $L$.

Next, suppose that $q$ divides $D(C)$ (so that
$q \ne 3$).   Then $q$ ramifies in $C$ with ramification
index $2$. 
Recall that $K/C$ is unramified at $q$,
since the polynomial $x^3 - v^3$ has discriminant $-27v^6$.
Thus each prime ideal factor in the factorization of $(q)$
in $K$ occurs to the second power.
\end{proof}

\begin{lem}\label{Lemma 2.5}
When $3 \nmid d$, $D(K) = 9 D(C)^3$.
\end{lem}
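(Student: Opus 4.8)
The plan is to compute $D(K)$ by climbing the tower $\Q\subset C\subset K$ and invoking the standard relation
$$D(K)\;=\;N_{C/\Q}\bigl(D(K/C)\bigr)\cdot D(C)^{[K:C]}\;=\;N_{C/\Q}\bigl(D(K/C)\bigr)\cdot D(C)^{3},$$
so that the whole problem collapses to showing $N_{C/\Q}(D(K/C))=9$; in fact I will prove the cleaner statement $D(K/C)=(3)\OO_C$. First I would localize the ramification of $K/C$. Since $v^{3}=a+b\sqrt{d}$ is a unit, the minimal polynomial $x^{3}-v^{3}$ of $v$ over $C$ has discriminant $-27v^{6}$, a unit times $27$; hence $D(K/C)$ divides $(27)\OO_C$ and is supported only on the primes of $C$ above $3$. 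Because $3\nmid d$, the prime $3$ is unramified in $C$, so $(3)\OO_C$ is either a single prime $\q$ of norm $9$, or a product $\p\p'$ of two distinct primes of norm $3$.

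Next I would import the factorization of $3$ in $K$ from the analysis already done in the proof of Lemma~2.2 (which applies here, since $v\in M$ and $3\nmid d$): that analysis shows that $3$ genuinely ramifies in $K$, and that, given this, the only possibilities are $(\q)=\QQ_1^2\QQ$ with both factors of norm $9$, or $(\p\p')=\PP\PP_1^2\PP'\PP_1'^2$ with all factors of norm $3$. In either case the ramified primes ($\QQ_1$, resp.\ $\PP_1$ and $\PP_1'$) carry ramification index $2$ over the residue characteristic $3$, so the ramification is tame and the exponent of each such prime in the different $\mathfrak{d}_{K/C}$ is exactly $2-1=1$, while the unramified factors contribute nothing. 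Applying $N_{K/C}$ to $\mathfrak{d}_{K/C}$ then gives $D(K/C)=\q$ in the first case and $D(K/C)=\p\p'$ in the second; since $\q=(3)\OO_C$ and $\p\p'=(3)\OO_C$, in both cases $D(K/C)=(3)\OO_C$, whose norm down to $\Q$ is $3^{[C:\Q]}=9$. Substituting into the tower formula yields $D(K)=9\,D(C)^{3}$ (both sides positive, as $K$ has two complex places).

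The one delicate point is pinning down the precise shape of the factorization of $3$ in $K$ — specifically, that the ramified prime has $e=2$ and not $e=3$, and that in the split case both primes $\p,\p'$ acquire a ramified factor. This is exactly where Lemma~2.1 is needed: via the proof of Lemma~2.2 it forces every prime above $3$ in $L$ to occur to the second power, hence every prime above $3$ in the subfield $K$ has ramification index at most $2$ over $\Q$, which together with the fact that $3$ does ramify in $K$ leaves only the stated factorizations. Once that is in hand, everything else is a routine application of the tame different exponent $e-1$ and the tower formula for discriminants.
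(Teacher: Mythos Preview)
Your argument is correct, but it takes a different route from the paper's. You ascend the tower $\Q\subset C\subset K$ and compute $D(K/C)$ directly: using the factorization of $(3)$ in $K$ established in the proof of Lemma~2.2, you read off that the ramification in $K/C$ is tame of index~$2$, apply the different exponent $e-1=1$, and obtain $D(K/C)=(3)\OO_C$ of norm~$9$. The paper instead descends from $L$: by Lemma~2.1, $L/F$ is unramified, so $D(L)=D(F)^{6}$; by Lemma~2.2, $D(L/K)$ has norm~$9$, so $D(L)=9\,D(K)^{2}$; equating gives $D(K)^{2}=D(F)^{6}/9$, and taking the positive square root together with $D(F)=-3D(C)$ yields $D(K)=9D(C)^{3}$.

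The paper's route is shorter and uses only the \emph{conclusions} of Lemmas~2.1 and~2.2, whereas your route reaches back into the \emph{proof} of Lemma~2.2 for the explicit shape of the factorization of $3$ in $K$. In return, your argument pins down the stronger statement $D(K/C)=(3)\OO_C$ rather than just its norm. One point worth making explicit in your write-up: in the split case $(3)=\p\p'$ in $C$, you assert that \emph{both} $\p$ and $\p'$ acquire a ramified factor in $K$. Lemma~2.1 alone gives $e\le 2$ for each, and ``$3$ ramifies in $K$'' gives $e=2$ for at least one; the symmetry between $\p$ and $\p'$ (e.g.\ via the automorphism $v\mapsto v'=\pm v^{-1}$ of $K$, which restricts to $\sqrt{d}\mapsto -\sqrt{d}$ on $C$) is what forces both to ramify. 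The paper's Lemma~2.2 states this factorization without spelling out that step either, so you are in good company, but a sentence to this effect would make your argument self-contained.
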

\begin{proof}
By Lemma 2.1 and \cite[Prop. 4.15]{N},
$D(L)  = D(F)^6$.
Thus by Lemma 2.2,
\begin{equation}\label{2.5}
D(K)^2 = D(L)/9 = D(F)^6/9.
\end{equation}
Since $D(K)$ is positive \cite[Prop. 2.15]{N},
\begin{equation}\label{2.6}
D(K) = - D(F)^3/3 = 9D(C)^3.
\end{equation}
\end{proof}

We are now prepared for the proof of Theorem 1.1.

\begin{proof}
Consider the set  $S$ of rational primes $p$ that divide
the discriminant $D(C)$. Note that $3 \notin S$. By Lemma 2.4,
each prime ideal in the factorization of $(p)$ in $K$ (as
well as in $L$) occurs to the second power.
Each prime ideal in the factorization of $(p)$ in $k$
must occur to either
the first or second power, and those occurring to the first power are
exactly the ones that ramify in $K$.   Those that ramify tamely
in $K$ are the only ones that divide $D(K/k)$ to the first power
\cite[p. 260]{N}.

For $p \in S$, 
if every prime ideal factor of $(p)$ in $k$ were to occur to the
first power, then $(p)$ would divide $D(K/k)$, contradicting Lemma 2.3.
Thus $p$ ramifies in $k$, so there is a unique prime ideal $\p$ in $k$
that divides $(p)$ to the first power. Note that $\p$ has norm $p$.
If $p=2$, then $\p^e$ exactly divides $D(K/k)$ 
for some $e \ge 2$ depending on $d$,
while if $p >3$, then $\p$ exactly divides $D(K/k)$.

We  proceed to say more about the value of $e$.
To distinguish the prime ideal $\p$ in the case $p=2$, call it $\p_2$.
The discriminant of
the $k$-basis $\{1, \sqrt{d} \}$ for $K$ is $4d$, so that
$D(K/k)$ divides $4d$.  
First suppose that $d \equiv 3 \pmod{4}$.
Since $\p_2$ divides $(2)$ to the first power in $k$,
$\p_2$ divides $(4d)$ to the second power.  
Thus $e \le 2$ in this case, so that $e=2$.
Next suppose that $d \equiv 2 \pmod{4}$.
Then $8$ divides $4d$, so that $e \in \{2,3\}$ in this case.

So far we have shown that
\begin{equation}\label{2.7}
D(K/k)= \p_2^e \prod_{3< p \in S} \p,
\end{equation}
where $\p_2$ is to be interpreted as $1$ when $d \equiv 1 \pmod{4}$.
(No prime ideal above $3$ occurs in this product
since $3$ does not divide $4d$.)
Taking absolute norms on both sides of (2.7), we have
\begin{equation}\label{2.8}
D(K)/D(k)^2 =
\begin{cases}
d, \quad \quad & d \equiv 1 \pmod{4} \\
4d,   \quad  \quad  & d \equiv 3 \pmod{4} \\
2^{e-1} d, \quad  \quad  & d \equiv 2 \pmod{4}.
\end{cases}
\end{equation}
By Lemma 2.5,
\begin{equation}\label{2.9}
D(K) =
\begin{cases}
9 d^3, \quad \quad & d \equiv 1 \pmod{4} \\
9\cdot 2^6 d^3, \quad  & d \equiv 2,3 \pmod{4}.
\end{cases}
\end{equation}
Thus
\begin{equation}\label{2.10}
D(k)^2=
\begin{cases}
9 d^2, \quad \quad & d \equiv 1 \pmod{4} \\
9 \cdot 2^4 d^2, \quad  \quad  & d \equiv 3 \pmod{4} \\
9 \cdot 2^{7-e} d^2, \quad  \quad  & d \equiv 2 \pmod{4}.
\end{cases}
\end{equation}
This shows that $e$ must be odd, so $e=3$.  Finally,
since $D(k)$ is negative \cite[Prop. 2.15]{N},
we obtain the desired result
\begin{equation}\label{2.11}
D(k) =
\begin{cases}
-3d, \quad \quad & d \equiv 1 \pmod{4} \\
-12d,  \quad  & d \equiv 2,3 \pmod{4}.
\end{cases}
\end{equation}
\end{proof}

\section{Proof of Theorem 1.2}
Assume throughout this section that $v \in M$.
Write $d=3m$ so that $F= \Q(\sqrt{-m})$.
Note that the order $\Z[\sqrt{-3d}]=\Z[3\sqrt{-m}]$ has conductor $3$ or $6$.
The proof of Theorem 1.2 utilizes the following two lemmas.
\begin{lem}\label{Lemma 3.1}
When $3 \mid d$, the extension $F(t)/F$ is unramified.
\end{lem}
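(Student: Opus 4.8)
The plan is to exhibit $F(t)$ as a subfield of the ring class field $M$ and then play the abelian, small-conductor structure of $M/F$ against the fact that $F(t)/F$ is cyclic of degree $3$.

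First I would show $F(t)\subseteq M$, arguing exactly as in the first paragraph of the proof of Lemma 2.1. Every principal prime ideal $\p$ of $F$ of norm $p=x^2+3dy^2$ splits completely in $M$, and also in $F(w)$: since $3\mid d$, such a prime has $3\nmid x$ (otherwise $9\mid 3dy^2$ and $9\mid x^2$, so $9\mid p$), hence $p\equiv 1\pmod 3$, so $\jac{d}{p}=\jac{-3}{p}\jac{-3d}{p}=1$ because $\jac{-3d}{p}=1$ ($-3d$ being a square modulo $\p$). By \cite[Thm. 8.19]{Cox} this forces $F(w)\subseteq M$, and since $v\in M$ we obtain $F(t)\subseteq L=F(v)=\Q(v)\cdot F(w)\subseteq M$. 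As $L/F$ is cyclic of degree $6$ and $[F(t):F]=3$ (the minimal polynomial $x^3-3\epsilon x-2a$ of $t$ is irreducible over $\Q$, hence over $F$ since $[F:\Q]=2$), $F(t)$ is the cubic subextension of $L/F$; in particular $F(t)/F$ is cyclic of degree $3$.

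Next I would bound the ramification of $F(t)/F$. The order $\Z[\sqrt{-3d}]=\Z[3\sqrt{-m}]$ has conductor $f\in\{3,6\}$, so the conductor of the abelian extension $M/F$ divides $f\OO_F$ (the ring class field of conductor $f$ lies in the ray class field of modulus $f\OO_F$), and hence so does the conductor of the subextension $F(t)/F$; thus $F(t)/F$ is unramified outside the primes of $F$ above $2$ and $3$. Since $3\nmid m$, the prime $3$ is unramified in $F=\Q(\sqrt{-m})$, so at a prime $\q\mid 3$ the conductor exponent of $F(t)/F$ is at most $v_{\q}(f\OO_F)\le 1$; but ramification of the cyclic cubic $F(t)/F$ at $\q$ would be total and wild (ramification index $3$ equal to the residue characteristic), forcing conductor exponent $\ge 2$. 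Hence $F(t)/F$ is unramified above $3$.

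Finally I would treat a prime $\mathfrak r\mid 2$, which is the only place a concrete computation enters. Ramification of the degree-$3$ extension $F(t)/F$ at $\mathfrak r$ is necessarily tame, so it would require a primitive cube root of unity in the residue field $\OO_F/\mathfrak r$; when $2$ is not inert in $F$ this field is $\mathbb F_2$ and nothing more is needed. When $2$ is inert in $F$ I would argue locally: reducing $x^3-3\epsilon x-2a$ modulo $2$ gives $x^3+x=x(x+1)^2$, and the simple root $x\equiv 0$ lifts by Hensel's lemma to a root $\rho\in\Z_2\subseteq F_{\mathfrak r}$. Then some prime $\mathfrak R$ of $F(t)$ over $\mathfrak r$ has $e(\mathfrak R/\mathfrak r)=f(\mathfrak R/\mathfrak r)=1$, and since $F(t)/F$ is Galois, every prime above $\mathfrak r$ has $e=f=1$, so $\mathfrak r$ splits completely and is unramified; combining the three cases proves the lemma. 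The main obstacle is exactly this inert-at-$2$ case: the soft conductor bound only gives conductor exponent $\le 1$ there, and since $\mathbb F_4$ does contain cube roots of unity, tame ramification cannot be excluded on general grounds — it is the explicit Hensel splitting of $x^3-3\epsilon x-2a$ at $2$, together with the Galois property, that closes the gap. (Incidentally, for $3\mid d$ one has $\epsilon=a^2-3mb^2\equiv a^2\pmod 3$, so $\epsilon=1$ and $3\nmid a$, although the mod-$2$ reduction used above is insensitive to the sign of $\epsilon$.)
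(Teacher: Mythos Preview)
Your argument is essentially correct and takes a genuinely different route from the paper's. The paper introduces the intermediate ring class field $J$ for the order $\Z[\sqrt{-m}]$ (whose conductor is $1$ or $2$) and shows by a short degree argument that in fact $t\in J$: since $[M:J]\in\{2,4\}$ while $t$ is cubic over $\Q$, one cannot have $[J(t):J]=2$. Ramification of $F(t)/F$ away from $3$ is then excluded by the tower $F\subset F(w)\subset L$, and ramification at $3$ by the fact that $3$ is coprime to the conductor defining $J$. Your approach instead bounds the conductor of $M/F$ by $f\OO_F$ with $f\in\{3,6\}$ and treats the primes above $2$ and $3$ by direct local arguments; this avoids introducing $J$ but costs a case analysis at $2$. (Incidentally, your first paragraph can be shortened: since $v\in M$ by the standing hypothesis of the section, $L=F(v)\subset M$ is immediate, and a separate verification that $F(w)\subset M$ is unnecessary.)

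There is one small gap in your Hensel step. The case where $2$ is inert in $F$ forces $m\equiv 3\pmod 8$, hence $d=3m\equiv 1\pmod 8$, and then the fundamental unit $a+b\sqrt d$ may have $a\in\tfrac12\Z\setminus\Z$. In that event $2a$ is an odd integer, and the reduction of $x^3-3\epsilon x-2a$ modulo $2$ is $x^3+x+1$, not $x^3+x$. Your conclusion still holds, since $x^3+x+1$ is irreducible and separable over $\mathbb F_2$, so $2$ is unramified (indeed inert) in $\Q(t)$ and hence unramified in $F(t)$; but this case should be addressed explicitly.
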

\begin{proof}
Let $J$ denote the ring class field for the order $\Z[\sqrt{-m}]$ in $F$.
Note that this order has conductor $1$ or $2$.  The formula for class
numbers of orders \cite[Thm. 7.24]{Cox} shows that the extension
$M/J$ has degree $2$ or $4$.  Note that $t \in M$, since
$v \in M$.  

Since $t$ has degree $3$ over $\Q$ and $J(t) \subset M$, 
we have $|J(t)/J| \le 2$.
Assume for the purpose of contradiction that equality holds.
The cubic minimal polynomial of $t$ over $\Q$ is divisible over $J$ by
the quadratic minimal polynomial of $t$ over $J$.
Therefore some conjugate of $t$ lies in $J$, so that $J(t)=J$.
Thus the assumption is false, and $t \in J$.

Consider the tower $F \subset F(w) \subset F(v)=L$.
The extensions $F(w)/F$ and $L/F(w) = \Q(w)(v)/\Q(w)(\sqrt{d})$ 
cannot ramify at
any rational prime other than $3$.  Thus the same is true
of the extension $F(t)/F$.
Since $F(t) \subset J$ and $3$ does not divide the conductor of 
$\Z[\sqrt{-m}]$,
the extension $F(t)/F$ must be unramified.
\end{proof}

\begin{rem}[\bf{3B}]
By Lemma 3.1, $F(t)/F$ is a cyclic unramified cubic extension
when $3 \mid d$.  Thus
$F(t)$ lies in the Hilbert class field of $F$, so that $3$ divides
the class number of $F$. 
\end{rem}
\begin{rem}[\bf{3C}]
For $d=3m$, consider the principal prime ideal
$\p = (x+y \sqrt{-m})$ in $F$ of norm $p$.  By Lemma 3.1,
the  corresponding Artin symbol for the extension $L/F$ fixes $t$,
so it maps $v$ to either $v$ or $v'$.  In the first case, 
$p \equiv 1 \pmod{3}$ and $v^{p-1} \equiv 1 \pmod{\PP}$,
while in the second case, since here $vv'=1$, we have
$p \equiv -1 \pmod{3}$ and $v^{p+1} \equiv 1 \pmod{\PP}$,
where $\PP$ is a prime ideal of $L$ above $\p$.
\end{rem}

\begin{rem}[\bf{3D}]
When $3 \mid d$ and $v \in M$, $D(L/F)$ has norm $729$. 
To see this, first observe that by Lemma 3.1, $F(t)/Q$ is unramified
at $3$. Note that  $L=F(t)(\sqrt{-3})$. It follows that
$D(L/F(t)) = (3)$, since by \cite[p. 685]{V}, 
$D(L/F(t))$ is the product of the prime ideals in the factorization
of $(3)$ in $F(t)$ which divide $(3)$ to the first power.
Taking the norm, we obtain  $D(L)/D(F(t))^2 = 3^6$.
By Lemma 3.1, $D(F(t))=D(F)^3$, so that $D(L)/D(F)^6 = 3^6$.
This proves that $D(L/F)$ has norm $729$.
\end{rem}

\begin{lem}\label{Lemma 3.2}
Suppose that $3 \mid d$.
The only rational primes that ramify in $L$ are the ones that divide
$D(C)$.  For a rational prime $p$ dividing $D(C)$,
each prime ideal in the factorizations of $(p)$
in $K$ and $L$ occurs to the second power.
Consequently the extension $K/C$ is unramified.
\end{lem}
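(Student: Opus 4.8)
The plan is to run the two towers $\Q\subset C\subset K\subset L$ and $\Q\subset F\subset F(t)\subset L$ in parallel, singling out the prime $3$. First I would record two elementary ramification bounds. Since $v$ is a root of $x^{3}-v^{3}\in\OO_{C}[x]$, with $v^{3}=a+b\sqrt d$ a unit of $\OO_{C}$, and this cubic has discriminant $-27v^{6}$ (a unit times $27$), the relative discriminant $D(K/C)$ divides $(27)$, so $K/C$ ramifies at most at primes above $3$; likewise $D(L/K)$ divides the discriminant $-3$ of $x^{2}+x+1$, so $L/K$ ramifies at most at primes above $3$. Because $d=3m$ is squarefree, $3$ ramifies in $C=\Q(\sqrt{3m})$, that is, $3\mid D(C)$. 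Now if a rational prime $p$ ramifies in $L$, choose primes $\PP\mid\mathfrak r\mid\q\mid p$ in $L,K,C,\Q$; then $e(\PP\mid p)=e(\PP\mid\mathfrak r)\,e(\mathfrak r\mid\q)\,e(\q\mid p)>1$, and if $p\ne 3$ and $p\nmid D(C)$ all three factors equal $1$ by the above, a contradiction. Hence $p\mid D(C)$ (recall $3\mid D(C)$), which is the first assertion.

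For the second assertion, fix $p\mid D(C)$, so $p$ ramifies in $C$ and $(p)=\q^{2}$ with $N\q=p$. If $p\ne 3$, then $K/C$ and $L/K$ are unramified at $p$ by the bounds above, so $\q\OO_{K}$ is a product of distinct primes, $(p)\OO_{K}=(\q\OO_{K})^{2}$, and this squaring persists in $L$; thus every prime in the factorization of $(p)$ in $K$ and in $L$ occurs exactly to the second power. For $p=3$ I would first compute the ramification of $3$ in $L$ through the second tower: $3$ is unramified in $F=\Q(\sqrt{-m})$ (since $3\nmid m$), $F(t)/F$ is unramified by Lemma 3.1, and $L=F(t)(\sqrt{-3})$ with $D(L/F(t))=(3)$ by Remark~(3D). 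Since $3$ is unramified in $F(t)/\Q$, the ideal $(3)$ is a product of distinct primes of $F(t)$, each therefore dividing $D(L/F(t))=(3)$, hence each ramifying in $L$ with index $2$; multiplying ramification indices down $\Q\subset F\subset F(t)\subset L$ gives $e(\PP\mid 3)=2$ for every prime $\PP$ of $L$ above $3$, so $(3)\OO_{L}=\prod_{\PP}\PP^{2}$. To pass back to $K$, take $\mathfrak r\mid\q$ in $K$ and $\PP\mid\mathfrak r$ in $L$ (with $(3)=\q^{2}$ in $C$): then $e(\mathfrak r\mid 3)=2\,e(\mathfrak r\mid\q)\ge 2$, while $2=e(\PP\mid 3)=e(\PP\mid\mathfrak r)\,e(\mathfrak r\mid 3)$ forces $e(\mathfrak r\mid\q)=1$; thus $(3)\OO_{K}=(\q\OO_{K})^{2}=\prod_{\mathfrak r}\mathfrak r^{2}$ and $K/C$ is unramified at $3$. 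Together with the case $p\ne 3$ and the fact that $D(K/C)\mid(27)$ (so $K/C$ is unramified away from $3$), this shows $K/C$ is unramified.

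The only delicate point — the one step needing more than these discriminant bounds and multiplicativity of ramification indices — is the identity $e(\PP\mid 3)=2$ in $L$. It relies on Lemma 3.1 and Remark~(3D); equivalently, each prime of $F(t)$ above $3$ ramifies in $L=F(t)(\sqrt{-3})$ because $3$ has odd valuation there ($3$ being unramified in $F(t)$), while $F(t)/F$ being unramified prevents $3$ from acquiring any further ramification in $L$ beyond that quadratic step.
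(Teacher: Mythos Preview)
Your proof is correct and follows essentially the same approach as the paper's: both use the discriminant bounds on $K/C$ and $L/K$ to handle primes $p\ne 3$, and both pass through the tower $\Q\subset F\subset F(t)\subset L$ via Lemma~3.1 to control the ramification at $3$. The only minor difference is that you invoke Remark~(3D) to see that primes above $3$ in $F(t)$ ramify in $L$, whereas the paper argues more directly: since $3$ already ramifies in $C\subset L$, and $3$ is unramified in $F(t)$ with $[L:F(t)]=2$, the ramification index of $3$ in $L$ is squeezed between $2$ and $2$. Both routes yield $e(\PP\mid 3)=2$, and your explicit treatment of the first assertion (primes unramified in $L$ unless they divide $D(C)$) is a welcome addition that the paper leaves implicit.
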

\begin{proof}
If $p$ divides $D(C)$, then $p$ ramifies in $C$ with ramification
index $2$. First assume that $p \ne 3$.
Then $L/C$ is unramified at $p$, since $K/C$ and $L/K$ are
unramified at $p$.
Thus each prime ideal in the factorizations of $(p)$
in $K$ and $L$ occurs to the second power.
Next consider the case $p=3$.  
We know that $3$ ramifies in $K$ and in $L$, because it ramifies in $C$.
By Lemma 3.1,  $3$ does not ramify
in $F(t)$.  Since $F(t)$ is a subfield of $L$ of index $2$,
it follows that 
each prime ideal in the factorizations of $(3)$
in $K$ and $L$ occurs to the second power.
Thus when $p$ divides $D(C)$, there can be no further ramification
from $C$ to $L$, so that $L/C$ and $K/C$ are unramified.
\end{proof}

We are now prepared for the proof of Theorem 1.2.
\begin{proof}
Consider the set  $S$ of rational primes $p$ that divide
the discriminant $D(C)$. Note that $3 \in S$. By Lemma 3.2,
each prime ideal in the factorization of $(p)$ in $K$
occurs to the second power.
Each prime ideal in the factorization of $(p)$ in $k$
must occur to either
the first or second power, and those occurring to the first power are
exactly the ones that ramify in $K$.

Mimicking the proof of (2.7), we obtain
\begin{equation}\label{3.1}
D(K/k)= 3\p_2^e \prod_{3< p \in S} \p.
\end{equation}
The reason for the factor $3$ is as follows.  
The proof of Theorem 1.1 applies here to show that all primes in $S$ 
other than $3$ ramify in $k$.
However, $3$ does not ramify in $k$; this is due to Lemma 3.1,
since $k \subset F(t)$.
Consequently each prime ideal in the factorization of $(3)$ in $k$
occurs to the first power, so each ramifies tamely in $K$.
This explains the factor $3$ in (3.1).

Taking absolute norms on both sides of (3.1), we have
\begin{equation}\label{3.2}
D(K)/D(k)^2 =
\begin{cases}
9d, \quad \quad & d \equiv 1 \pmod{4} \\
4\cdot 9d,   \quad  \quad  & d \equiv 3 \pmod{4} \\
2^{e-1} 9d, \quad  \quad  & d \equiv 2 \pmod{4}.
\end{cases}
\end{equation}
By Lemma 3.2, $D(K)=D(C)^3$ so that
\begin{equation}\label{3.3}
D(K) =
\begin{cases}
 d^3, \quad \quad & d \equiv 1 \pmod{4} \\
2^6 d^3, \quad  & d \equiv 2,3 \pmod{4}.
\end{cases}
\end{equation}
Thus
\begin{equation}\label{3.4}
D(k)^2=
\begin{cases}
m^2, \quad \quad & d \equiv 1 \pmod{4} \\
2^4 m^2, \quad  \quad  & d \equiv 3 \pmod{4} \\
2^{7-e} m^2, \quad  \quad  & d \equiv 2 \pmod{4}.
\end{cases}
\end{equation}
This shows that $e$ must be odd, so $e=3$.  Finally,
since $D(k)$ is negative,
we obtain the desired result
\begin{equation}\label{3.5}
D(k) =
\begin{cases}
-m, \quad \quad & d \equiv 1 \pmod{4} \\
-4m,  \quad  & d \equiv 2,3 \pmod{4}.
\end{cases}
\end{equation}
\end{proof}

\section{Proof of Theorem 1.3}
Assume throughout this section that $v \notin M$ and $3 \nmid d$.
The proof of Theorem 1.3 utilizes the following five lemmas.
\begin{lem}\label{Lemma 4.1}
Suppose that $3 \nmid d$.
If $D(C)$ is divisible by the prime $q$, then
each prime ideal in the factorizations of $(q)$
in $F(t)$, $K$, and $L$ occurs to the second power.
On the other hand, if $D(C)$ is not divisible by $q$
and $q \ne 3$, then $q$ is unramified in $L$.
\end{lem}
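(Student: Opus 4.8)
The plan is to reduce everything to the behaviour of $q$ in the tower $\Q \subset C \subset K \subset L$, using the ramification data already recorded in the proof of Lemma 2.1. The key point, valid for any rational prime $q \ne 3$, is that no ramification is introduced at $q$ above $C$: the extension $K/C = \Q(\sqrt{d})(v)/\Q(\sqrt{d})$ is unramified away from $3$ since $x^3 - v^3$ has discriminant $-27v^6$ and $v^3 = a+b\sqrt{d}$ is a unit, while $L/K = K(w)/K$ is unramified away from $3$ since $x^2 + x + 1$ has discriminant $-3$. Hence $L/C$, and \emph{a fortiori} $K/C$, is unramified at every prime above such a $q$.

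For the first assertion, suppose $q \mid D(C)$. Because $3 \nmid d$ we have $D(C) \in \{d, 4d\}$, so $3 \nmid D(C)$ and in particular $q \ne 3$; moreover $q$ ramifies in $C/\Q$ with ramification index $2$. I would first observe that $q$ also divides $D(F)$: for odd $q$ this is clear since then $q \mid d \mid 3d$, and for $q = 2$ one uses $-3d \equiv d \pmod{4}$, so that $2$ divides $D(C)$ precisely when it divides $D(F)$, namely when $d \not\equiv 1 \pmod{4}$. Thus $q$ also ramifies in $F/\Q$ with index $2$. Now I combine these facts. For a prime $\PP$ of $L$ above $q$, multiplicativity of ramification indices in $\Q \subset C \subset L$ together with the unramifiedness of $L/C$ at $q$ gives $e(\PP \mid q) = 2$; the same computation in $\Q \subset C \subset K$ gives $e(\cdot \mid q) = 2$ for primes of $K$ above $q$. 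This settles the claim for $K$ and $L$. For $F(t)$, recall $F \subset F(t) \subset L$ (indeed $F(t) \subseteq F(v) = L$): if $\p$ is a prime of $F(t)$ above $q$ and $\PP$ a prime of $L$ above $\p$, then $e(\p \mid q)$ divides $e(\PP \mid q) = 2$ and is divisible by the ramification index $2$ of $q$ in $F/\Q$, hence $e(\p \mid q) = 2$. Therefore each prime ideal dividing $(q)$ in $F(t)$, $K$, or $L$ occurs to the second power.

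For the second assertion, suppose $q \nmid D(C)$ and $q \ne 3$. Then $q$ is unramified in $C/\Q$, and by the key point above $L/C$ is unramified at $q$, so $q$ is unramified in $L/\Q$.

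I do not anticipate a genuine difficulty: the argument is bookkeeping of ramification indices in towers, resting entirely on the two discriminant computations $-27v^6$ and $-3$ from Lemma 2.1, and the hypothesis $v \notin M$ of this section is not actually used. The only step requiring a moment's care is the verification that every prime dividing $D(C)$ divides $D(F)$ --- needed so that the ``ramification index $2$ from below'' is available when passing to $F(t)$ --- and within that verification the sole non-trivial case is $q = 2$, which reduces to the congruence $-3d \equiv d \pmod{4}$.
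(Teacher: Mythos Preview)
Your proof is correct and follows essentially the same approach as the paper: use the discriminant computations $-27v^6$ and $-3$ to see that $L/C$ is unramified away from $3$, then propagate the ramification index $2$ from $C$ (resp.\ $F$) up to $K$, $L$ (resp.\ $F(t)$). You are more explicit than the paper in two places---the verification that $q \mid D(C)$ forces $q \mid D(F)$ (the paper simply asserts ``$q$ ramifies in $C$ and in $F$''), and the handling of $F(t)$ via the tower $F \subset F(t) \subset L$---but these are elaborations of the same argument, not a different route.
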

\begin{proof}
First suppose that $q \nmid D(C)$ and $q \ne 3$.
The extension $C/\Q$ is unramified at $q$.
The extension $K/C$ is also unramified at $q$
since it can ramify only at $3$, due to the fact that
$x^3 - v^3$ has discriminant $-27v^6$.
Moreover, $L/K$ can ramify only at $3$, since $L=K(w)$
and $x^2+x+1$ has discriminant $-3$.  Thus $q$
is unramified in $L$.

Next suppose that $q$ divides $D(C)$.  Then $q$
ramifies in $C$ and in $F$, but there can be no
further ramification in the extension $L/C$, since $q \ne 3$. Thus
each prime ideal in the factorizations of $(q)$
in $F(t)$, $K$, and $L$ occurs to the second power.
\end{proof}

\begin{lem}\label{Lemma 4.2}
When $3 \nmid d$,
the extensions $L/F(t)$ and $B/F$ are unramified.
\end{lem}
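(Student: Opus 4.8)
The plan is to reduce both assertions to the single claim that $2 \nmid |I_\p|$ for every prime $\p$ of $F$, where $I_\p$ denotes the inertia subgroup of $\p$ in $\Gal(L/F)\cong \Z/6\Z$. Since $L/F$ is cyclic of degree $6$, its subgroups of orders $2$ and $3$ are unique, and they equal $\Gal(L/F(t))$ and $\Gal(L/B)$ respectively, because $F(t)/F$ has degree $3$ and $B/F$ has degree $2$. A standard inertia computation then gives: $B/F$ is unramified at $\p$ iff the image of $I_\p$ in $\Gal(B/F)$ is trivial, i.e. $I_\p\subseteq\Gal(L/B)$; and $L/F(t)$ is unramified above $\p$ iff $I_\p\cap\Gal(L/F(t))=1$. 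Since $I_\p$ is a subgroup of the cyclic group $\Z/6\Z$, hence cyclic, both conditions are equivalent to $2\nmid|I_\p|$. So it suffices to prove that $B/F$ is unramified.

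First I would note that the only rational prime that can ramify in $B/F$ is $3$: indeed $B=F(w)$ where $w$ is a root of the monic polynomial $x^2+x+1$, which has discriminant $-3$, so $D(B/F)$ divides $(3)$. (The same observation applied to $L=F(t)(w)$ shows $D(L/F(t))$ divides $(3)$.) Since $3\nmid d$, the prime $3$ is tamely ramified in $F=\Q(\sqrt{-3d})$ with ramification index $2$, so $(3)=\p_3^2$ for a unique prime $\p_3$ of $F$; it remains to check that $\p_3$ is unramified in $B$.

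For this I would pass through $\Q(\sqrt{-3})\subset B$. The prime $3$ is ramified in $\Q(\sqrt{-3})$ with ramification index $2$, and $B=\Q(\sqrt{-3})(\sqrt{d})$ where $\sqrt{d}$ is a root of $x^2-d$; since $3\nmid d$, this polynomial has discriminant $4d$ prime to $3$, so $3$ is unramified in $B/\Q(\sqrt{-3})$. Therefore $3$ has ramification index exactly $2$ in $B/\Q$. As ramification indices are multiplicative in the tower $\Q\subset F\subset B$, and $3$ already has ramification index $2$ in $F/\Q$, we get that $\p_3$ is unramified in $B/F$. Hence $B/F$ is unramified, and the statement for $L/F(t)$ follows from the equivalence above (equivalently, $L/F(t)$ is obtained from the unramified extension $B/F$ by the base change $F(t)/F$, since $L=B\cdot F(t)$).

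The only real obstacle is the bookkeeping at $3$: because $3$ is the residue characteristic, one cannot dismiss wild ramification in $B/F$ a priori, and routing the computation of $e(3,B/\Q)$ through $\Q(\sqrt{-3})$ is what pins it down cleanly. Everything else is formal; in particular the argument does not use the running hypothesis $v\notin M$.
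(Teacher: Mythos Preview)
Your argument is correct, but it takes a more circuitous route than the paper's. The paper observes in two lines that each of the quadratic extensions $L/F(t)$ and $B/F$ can be generated by \emph{either} $w$ or $\sqrt{d}$: since $L=F(t,w)=F(t,\sqrt{d})$ and $B=F(w)=F(\sqrt{d})$, the relative discriminant in each case divides both $-3$ (from $x^2+x+1$) and $4d$ (from $x^2-d$), and $3\nmid d$ makes these coprime. You noticed half of this (that $D(L/F(t))$ and $D(B/F)$ divide $(3)$), but instead of invoking the second generator to finish immediately, you set up the inertia-group equivalence in $\Gal(L/F)\cong\Z/6\Z$ to reduce $L/F(t)$ to $B/F$, and then handle the prime $3$ in $B/F$ by computing $e(3,B/\Q)=2$ through the tower $\Q\subset\Q(\sqrt{-3})\subset B$.

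Both approaches are sound. The paper's is shorter and exploits the coincidence that two natural quadratic generators are available; yours is more structural, showing a priori that the two unramifiedness claims are \emph{equivalent} (not merely both true), and your treatment of the prime $3$ via multiplicativity of $e$ is a standard alternative to the ``two coprime basis discriminants'' trick. Your closing remark that the hypothesis $v\notin M$ plays no role is correct and applies equally to the paper's proof.
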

\begin{proof}
Any prime ideal in $F(t)$ ramifying in
$L=F(t,w)=F(t,\sqrt{d})$ would have to divide
both $F(t)$-basis discriminants $4d$ and $-3$,
which is impossible.
The extension $B/F$ is unramified for the same reason.
\end{proof}

\begin{lem}\label{Lemma 4.3}
When $3 \nmid d$,
the extension $L/B$ ramifies only at $3$.
\end{lem}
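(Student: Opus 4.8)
The plan is to exploit the fact that $L$ is obtained from $B$ by adjoining a cube root of a \emph{unit}. Recall that $L = \Q(w,v) = K(w)$ and $B = \Q(\sqrt{-3},\sqrt{d}) = \Q(w,\sqrt{d})$; since $\sqrt{d} \in K \subseteq L$, we have $B \subseteq L$, and $L = B(v)$. Now $v^3 = a + b\sqrt{d}$ is the fundamental unit of $C = \Q(\sqrt{d})$, hence a unit of $\OO_C \subseteq \OO_B$, so $v \in \OO_L$. Because $L/F$ has degree $6$ we have $[L:\Q]=12$, while $[B:\Q]=4$, so $[L:B]=3$ and therefore $x^3 - (a+b\sqrt{d})$ is the minimal polynomial of $v$ over $B$.

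First I would compute the discriminant of this polynomial, namely $-27(a+b\sqrt{d})^2$. Since $a+b\sqrt{d}$ is a unit of $\OO_B$, this equals a unit times $3^3$. As $v \in \OO_L$, we have the relation $\operatorname{disc}\bigl(x^3 - (a+b\sqrt{d})\bigr)\OO_B = [\OO_L : \OO_B[v]]^2\, D(L/B)$, so in particular $D(L/B)$ divides $\bigl(-27(a+b\sqrt{d})^2\bigr) = (3)^3$. Consequently every prime ideal of $B$ ramifying in $L$ lies above $3$, which is exactly the assertion of the lemma. This is the same discriminant computation already used in Lemmas 2.2 and 4.1, where $x^3 - v^3$ is noted to have discriminant $-27v^6$.

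There is essentially no obstacle here. The only points deserving a word of justification are that $x^3 - (a+b\sqrt{d})$ is irreducible over $B$ — which is immediate from $[L:B]=3$ — and that $v$ is an algebraic integer, which is clear since it is a root of this monic integral polynomial; together these legitimize the use of the conductor--discriminant relation above. One might alternatively phrase the argument via the tower $B \subseteq L$ and note directly that a prime $\q$ of $B$ with $\q \nmid (3)$ is unramified in $B[v]/B$ because it does not divide the polynomial discriminant, but the bookkeeping is identical.
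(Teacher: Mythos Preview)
Your computation that $D(L/B)$ divides $(3)^3$ is correct and shows that any prime of $B$ ramifying in $L$ must lie above $3$. However, this is not the full content of the lemma as the paper intends it and as it is used later: Lemma~4.3 is stated in Section~4 under the standing hypothesis $v \notin M$, and its real point is that $L/B$ \emph{does} ramify at $3$. Your argument never invokes $v \notin M$, and indeed without that hypothesis the conclusion fails: when $v \in M$ and $3 \nmid d$, Lemma~2.1 shows $L/F$ is unramified, hence so is $L/B$.

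The paper's own proof takes your conclusion (equivalently, Lemma~4.1) as the easy half and then argues by contradiction for the other: if $L/B$ were unramified, then since $B/F$ is also unramified (Lemma~4.2), the whole extension $L/F$ would be unramified; this would force $L \subset M$, contradicting $v \notin M$. That $L/B$ actually ramifies is what Lemma~4.4 relies on when it asserts that the primes of $B$ above $3$ ramify wildly in $L$ with index $3$. So you should add this second step: assume $L/B$ unramified, combine with Lemma~4.2 to get $L/F$ unramified, and derive $v \in M$, a contradiction.
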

\begin{proof}
In view of Lemmas 4.1 and 4.2,  we need only show that
the extension $L/B$ is ramified.
Suppose for the purpose of contradiction that it isn't.
Then by Lemma 4.2, the extension $L/F$ would be unramified.
Consequently  $L \subset M$, which contradicts $v \notin M$.
\end{proof}

\begin{lem}\label{Lemma 4.4}
When $3 \nmid d$, $D(L/B)$ has norm $3^8$ and $D(K/C)$ has norm $3^6$.
\end{lem}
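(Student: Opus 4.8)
I would reduce to the prime $3$, work out the ramification of the towers $C\subset K\subset L$ and $C\subset B\subset L$ above $3$, compute the $3$-parts of the relative discriminants via transitivity of the discriminant, and then take norms. By Lemma 4.3 the discriminant $D(L/B)$ is supported only at primes above $3$, and the same holds for $D(K/C)$: since $3\nmid d$, no rational prime dividing $D(C)$ equals $3$, and Lemma 4.1 shows that every such prime --- as well as every prime $q\neq 3$ not dividing $D(C)$ --- is unramified in $K/C$. Moreover $L/\Q$ is Galois (the conjugates of $v$ are $w^{i}v$ and $w^{i}v'$, all in $L$), so the primes above $3$ are conjugate and it suffices to work at a single one; fix a prime $\QQ$ of $L$ above $3$ and let $\PP_B$, $\PP_K$, $\p$ be its restrictions to $B$, $K$, $C$.

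Next I would pin down the ramification at $3$. Since $3\nmid d$, the prime $3$ is unramified in $C$; since $3\mid D(F)$, it ramifies in $F$, so $e(\PP_B/3)=2$ because $B/F$ is unramified (Lemma 4.2). By Lemma 4.3, $L/B$ is ramified at $3$, and since $[L:B]=3$ with residue characteristic $3$ this forces $L/B$ to be totally (wildly) ramified there, whence $e(\QQ/3)=6$. Comparing with $C\subset K\subset L$, where $[K:C]=3$, $[L:K]=2$ and $3$ is unramified in $C$, forces $e(\PP_K/3)=3$ (so $K/C$ is totally ramified at $3$) and $e(\QQ/\PP_K)=2$ (so $L/K$ is ramified of degree $2$ at $3$). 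The quadratic steps $B/C=C(\sqrt{-3})/C$ and $L/K=K(\sqrt{-3})/K$ are ramified at $3$, since $-3$ has odd valuation at $\p$ and at $\PP_K$, but only tamely; hence $v_{\p}(D(B/C))=1$ and $v_{\PP_K}(D(L/K))=1$.

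The crux is $v_{\p}(D(K/C))=3$. For the upper bound, $v$ has minimal polynomial $x^{3}-u_{0}$ over $C$ with $u_{0}=a+b\sqrt{d}$ a \emph{unit}, so $D(K/C)$ divides the ideal generated by $\mathrm{disc}(x^{3}-u_{0})=-27u_{0}^{2}$, i.e.\ by $27$, giving $v_{\p}(D(K/C))\le v_{\p}(27)=3$. For the lower bound, $3$ ramifies in $K/C$, hence --- being a degree-$3$ extension of a field of residue characteristic $3$ --- wildly, so the discriminant exponent there is at least $3$; with the upper bound this gives $v_{\p}(D(K/C))=3$. Now transitivity of the discriminant gives, at primes above $3$,
\[
D(L/C)=\Nm_{K/C}\bigl(D(L/K)\bigr)\cdot D(K/C)^{2}=\Bigl(\prod_{\p\mid 3}\p\Bigr)\Bigl(\prod_{\p\mid 3}\p^{6}\Bigr)=\prod_{\p\mid 3}\p^{7},
\]
using $\Nm_{K/C}(\PP_K)=\p$ since $K/C$ is totally ramified at $3$. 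Applying transitivity through $C\subset B\subset L$ instead, $D(L/C)=\Nm_{B/C}\bigl(D(L/B)\bigr)\cdot D(B/C)^{3}$ and the last factor contributes $\prod_{\p\mid 3}\p^{3}$, so $\Nm_{B/C}\bigl(D(L/B)\bigr)=\prod_{\p\mid 3}\p^{4}$; since $D(L/B)$ is supported above $3$ and $B/C$ is totally ramified there, this forces $D(L/B)=\prod_{\PP_B\mid 3}\PP_B^{4}$. Finally $\prod_{\p\mid 3}\Nm_{C/\Q}(\p)=3^{2}$ and $\prod_{\PP_B\mid 3}\Nm_{B/\Q}(\PP_B)=3^{2}$ (from $\sum ef=[C:\Q]=2$ with each $e(\p/3)=1$, resp.\ $\sum ef=[B:\Q]=4$ with each $e(\PP_B/3)=2$), so $\Nm D(K/C)=(3^{2})^{3}=3^{6}$ and $\Nm D(L/B)=(3^{2})^{4}=3^{8}$.

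The main obstacle is the local work at $3$: establishing that $K/C$ (equivalently $L/B$) is totally ramified there --- this is the point at which the hypothesis $v\notin M$ is used, through Lemma 4.3 --- and then squeezing $v_{\p}(D(K/C))$ between the upper bound coming from the minimal polynomial $x^{3}-u_{0}$ of the unit $v$ and the lower bound coming from wild ramification. Everything after that is routine bookkeeping with the tower formula and norm computations.
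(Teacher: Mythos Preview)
Your proof is correct and follows essentially the same approach as the paper: reduce to the prime $3$, deduce total ramification in $K/C$ from Lemma~4.3, pin down $v_{\p}(D(K/C))=3$ by squeezing between the wild-ramification lower bound and the upper bound from $\mathrm{disc}(x^{3}-u_{0})=-27u_{0}^{2}$, compute the exponent of $D(L/K)$ from $L=K(\sqrt{-3})$, and then compare two towers via transitivity of the discriminant. The only organizational difference is that you carry out the comparison locally over $C$ (computing $v_{\p}(D(L/C))$ through $C\subset K\subset L$ and $C\subset B\subset L$), whereas the paper passes to absolute discriminants over $\Q$ (writing $D(L)=9^{s}D(F)^{6}$ from the $B$-tower and $D(L)=3^{8}D(F)^{6}$ from the $K$-tower, then solving for $s$); the content is the same.
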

\begin{proof}
We have the prime ideal factorization $(3) = (\sqrt{-3})^2$ in $\Q(\sqrt{-3})$.
In $B$, either $(\sqrt{-3})=\q$ or $(\sqrt{-3})=\p\p'$, where
$\q$ has norm $9$ and $\p, \p'$ have norm $3$.
By Lemma 4.3, these prime ideals ramify wildly 
in the cubic cyclic extension $L/B$,
with ramification index $3$. Thus for some integer $s \ge 3$,
$D(L/B)=\q^s$ or $D(L/B)=(\p\p')^s$ \cite[Cor.2, p. 260]{N}.
Consequently $D(L/B)$ has norm $9^s$. Equivalently,
\begin{equation}\label{4.1}
D(L)=9^s D(B)^3.
\end{equation}

We now know that the ramification index of $3$ in 
the Galois extension $L/\Q$ is 
divisible by $3$. 
In $C$, either $(3)=\q$ or $(3)=\p\p'$, where
$\q$ has norm $9$ and $\p, \p'$ have norm $3$.
As $K/C$ is a cubic extension,  we have wild ramification in $K$ of the form
$(3)=\QQ^3$ or $(3)=(\PP\PP')^3$, where
$\QQ$ has norm $9$ and $\PP, \PP'$ have norm $3$.
It follows that $D(K/C)$ equals $\q^r$ or 
$(\p \p')^r$ for some integer $r \ge 3$.
Thus $D(K/C)$ has norm $9^r$
Moreover, since $D(K/C)$ divides $27v^6$, the norm of $D(K/C)$
divides $27^2=9^3$.Thus $r=3$, so that $D(K/C)$ has the desired norm $3^6$.
Equivalently, 
\begin{equation}\label{4.2}
D(K)=3^6 D(C)^3 =  - 3^3 D(F)^3.
\end{equation}

By Lemma 4.2,  $D(B)=D(F)^2$.  Thus, by (4.1),
\begin{equation}\label{4.3}
D(L)=9^s D(F)^6.
\end{equation}

Since $L=K(\sqrt{-3})$,
$D(L/K)$ equals the product of the prime ideals
in the factorization of $(3)$ in $K$ that occur to odd powers  
\cite[p. 685]{V}.
Thus, in the notation above, $D(L/K)$ equals $\QQ$ or $\PP\PP'$. 
Consequently, $D(L/K)$ has norm $9$, so that by (4.2),
\begin{equation}\label{4.4}
D(L) = 9 D(K)^2= 3^8 D(F)^6.
\end{equation}
Comparing equations (4.3) and (4.4), we see that $s=4$.
Therefore, by (4.1), $D(L/B)$ has the desired norm $3^8$.
\end{proof}

\begin{rem}[\bf{4E}]
Since each prime ideal factor of $D(L/B)$
occurs to the power $s=4$, the corresponding
higher ramification groups $G_i$ must be trivial
for $i \ge 2$, in view of
\cite[p. 265]{N}.   This is because  $s=4=(|G_0|-1) + (|G_1|-1)$,
where $G_0=G_1=\Gal(L/B)$.
\end{rem}

\begin{lem}\label{Lemma 4.5}
When $3 \nmid d$,
$D(F(t)) = 3^4 D(F)^3$.
\end{lem}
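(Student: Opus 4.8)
The plan is to combine the tower $F\subset F(t)\subset L$ with the value $D(L)=3^{8}D(F)^{6}$ established in the proof of Lemma 4.4. Since $L/F$ is cyclic of degree $6$ and $[F(t):\Q]=6$, the field $F(t)$ is the unique cubic subextension of $L/F$; hence $[L:F(t)]=2$, and as $F(t)/F$ has odd degree it cannot contain $w$, so $L=F(t)(w)=F(t)(\sqrt{-3})$. Because $w$ satisfies $x^{2}+x+1$, whose discriminant is $-3$, the relative discriminant $D(L/F(t))$ divides $(3)$, so only primes of $F(t)$ above $3$ can divide it.

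The key step is to show that $L/F(t)$ is in fact unramified. Arguing as in Remark (3D) (cf.\ \cite[p.~685]{V}), $D(L/F(t))$ is the product of those prime ideals of $F(t)$ that divide $(3)$ to an odd power. But $3\nmid d$ forces $3$ to divide $D(F)$ exactly once, so $3$ ramifies in $F$ with $(3)=\q^{2}$; hence for every prime $\PP$ of $F(t)$ above $3$, writing $\q=\PP\cap\OO_{F}$, we get $v_{\PP}(3)=e(\PP/\q)\,e(\q/3)=2\,e(\PP/\q)$, which is even. Thus no prime of $F(t)$ divides $(3)$ to an odd power, and $D(L/F(t))=(1)$.

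It follows that $D(L)=D(F(t))^{2}$, whence $D(F(t))^{2}=3^{8}D(F)^{6}$. A sign check finishes the proof: $F$ is imaginary quadratic, so $F(t)$ has no real embedding and $D(F(t))<0$ by \cite[Prop.~2.15]{N}; since $D(F)<0$ as well, both $D(F(t))$ and $3^{4}D(F)^{3}$ are negative numbers with the same square, so $D(F(t))=3^{4}D(F)^{3}$. The only point that genuinely needs care is the unramifiedness claim, and even there the essential observation is just that $e(\q/3)=2$ — a property of $F$ alone, insensitive to how the cubic $F(t)/F$ ramifies at $3$. Everything else is routine bookkeeping.
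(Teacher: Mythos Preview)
Your argument is correct and follows the same overall strategy as the paper: use $D(L)=3^{8}D(F)^{6}$ from (4.4), show that $L/F(t)$ is unramified so that $D(F(t))^{2}=D(L)$, and take the negative square root. The only difference is in how you establish unramifiedness of $L/F(t)$: you invoke Vaughan's criterion and the parity of $v_{\PP}(3)$, whereas the paper simply quotes Lemma~4.2, whose proof observes that $L=F(t)(\sqrt{-3})=F(t)(\sqrt{d})$, so any ramified prime would have to divide both basis discriminants $-3$ and $4d$, which is impossible since $3\nmid d$. The two-generator trick is shorter and avoids the detour through \cite{V}, but your parity argument is perfectly valid and has the mild advantage of not needing to recognize the second presentation $L=F(t)(\sqrt{d})$.
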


\begin{proof}
By (4.4),
$D(L)=3^8 D(F)^6$.   By Lemma 4.2,
$D(F(t))^2=D(L)$, so that
$D(F(t)) = 3^4 D(F)^3$.
\end{proof}

\begin{rem}[\bf{4F}]
When $3 \nmid d$ and $v \notin M$, 
it follows from (4.4) that $D(L/F)$ has norm $3^8$.
This is in contrast with
the case $3 \mid d$; see Remark (5G).
\end{rem}

We are now prepared for the proof of Theorem 1.3.
\begin{proof}
Since $F(t)=k(\sqrt{-3d})$, the odd part of $D(F(t)/k)$
is the product of the prime ideals of odd norm in the factorization
of $(-3d)$ in $k$ which occur to odd powers \cite{V}.
Let $p$ be any odd prime dividing $3d$. By \cite[Prop. 2.13]{N}, $p | D(k)$,
since the minimal polynomial of $t$ over $\Q$ has
discriminant $-108db^2$, which $p$ divides to an odd power. 
Thus
$p$ ramifies in $k$, so that exactly one prime ideal
$\p_p$ in the factorization of $(p)$ in $k$
occurs to an odd power, where $\p_p$ has norm $p$.
Therefore the odd part of $D(F(t)/k)$ equals $\prod \p_p$,
where $p$ runs through the odd prime factors of $3d$.

We now consider the case where $2 \mid D(C)$,
i.e., $d \e 2,3 \pmod 4$.
By Lemma 4.1, each prime ideal in the factorizations of $(2)$
in $F(t)$ and $K$ occurs to the second power.
Each prime ideal in the factorization of $(2)$
in $k$ must occur to the first or second power,
and those occurring to the first power are exactly the ones
that ramify in $F(t)$ and in $K$.  If every prime ideal factor
of $(2)$ in $k$ were to occur to the first power, then $(2)$
would divide $D(K/k)$, contradicting Lemma 2.3.  Thus exactly
one prime ideal $\p_2$ in $k$ divides $(2)$ to the first
power, and $\p_2$ has norm $2$.
Then $\p_2^e$ exactly divides $D(F(t)/k)$ for some $e \ge 2$ depending on $d$.

The discriminant of
the $k$-basis $\{1, \sqrt{-3d} \}$ for $F(t)$ is $-12d$, so that
$D(F(t)/k)$ divides $12d$.
First suppose that $d \equiv 3 \pmod{4}$.
Since $\p_2$ divides $(2)$ to the first power in $k$,
$\p_2$ divides $(12d)$ to the second power.
Thus $e \le 2$ in this case, so that $e=2$.
Next suppose that $d \equiv 2 \pmod{4}$.
Then $8$ divides $12d$, so that $e \in \{2,3\}$ in this case.

So far we have shown that
\begin{equation}\label{4.5}
D(F(t)/k)= \p_2^e \prod_{p} \p_p,
\end{equation}
where $p$ runs through the odd primes dividing $3d$, and
where $\p_2$ is to be interpreted as $1$ when $d \equiv 1 \pmod{4}$.
Taking norms on both sides of (4.5), we have
\begin{equation}\label{4.6}
D(F(t))/D(k)^2 =
\begin{cases}
-3d, \quad \quad & d \equiv 1 \pmod{4} \\
-12d,   \quad  \quad  & d \equiv 3 \pmod{4} \\
-3d \cdot 2^{e-1} , \quad  \quad  & d \equiv 2 \pmod{4}.
\end{cases}
\end{equation}
By (4.6) and Lemma 4.5,
\begin{equation}\label{4.7}
D(k)^2=
\begin{cases}
3^6 d^2, \quad \quad & d \equiv 1 \pmod{4} \\
3^6 \cdot 2^4 d^2, \quad  \quad  & d \equiv 3 \pmod{4} \\
3^6 \cdot 2^{7-e} d^2, \quad  \quad  & d \equiv 2 \pmod{4}.
\end{cases}
\end{equation}
This shows that $e$ must be odd, so $e=3$.  Finally,
since $D(k)$ is negative \cite[Prop. 2.15]{N},
we obtain the desired result
$D(k) = 9 D(F)$.
\end{proof}

\section{Proof of Theorem 1.4}

Assume throughout this section that $v \notin M$ and $d = 3m$.
Then  $F= \Q(\sqrt{-m})$.
Note that the order $\Z[\sqrt{-3d}]=\Z[3\sqrt{-m}]$ 
in $F$ has conductor $3$ or $6$.
The proof of Theorem 1.4 utilizes the following three lemmas.

\begin{lem}\label{Lemma 5.1}
Let $p$ be a prime dividing $D(F)$.  Then each prime ideal
in the factorizations of $(p)$ in $F(t)$, $K$, and $L$
occurs to the second power.
\end{lem}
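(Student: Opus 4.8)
The plan is to pin down the prime $p$, show that $L/C$ is unramified at $p$, and then deduce the ramification of $(p)$ in $F(t)$, $K$, and $L$ from the multiplicativity of ramification indices along the two towers $\Q\subset C\subset K\subset L$ and $\Q\subset F\subset F(t)\subset L$. Both towers make sense: $\Q\subset C\subset K\subset L$ is built into the notation, while $\Q\subset F(t)\subset L$ holds because $t=v+v'\in\Q(v)=K$ and $F\subset L$, and of course $F\subset F(t)$.

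First I would note that $p\ne 3$: since $3\mid d$ with $d$ squarefree we have $3\nmid m$, so $3\nmid D(F)=D(\Q(\sqrt{-m}))$. Moreover $d=3m\equiv -m\pmod 4$, so the quadratic fields $C=\Q(\sqrt d)$ and $F=\Q(\sqrt{-m})$ have the same ramified primes apart from $3$, which ramifies in $C$ but not in $F$. Hence $p$ also divides $D(C)$, so $p$ ramifies in $C$; and since $C$ and $F$ are quadratic, $p$ has ramification index $2$ in each.

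Next I would rule out any further ramification at $p$ above $C$. The field $K=C(v)$ is generated by a root of $x^3-v^3$, whose discriminant $-27v^6$ is a unit away from $3$ because $v^3=a+b\sqrt d$ is a unit in $\OO_C$; since $p\ne 3$, the extension $K/C$ is unramified at $p$. Likewise $L/K=K(w)/K$ is unramified at $p$ because $x^2+x+1$ has discriminant $-3$. Therefore $L/C$ is unramified at $p$. Writing $(p)=\p^2$ in $C$ and taking any prime $\PP$ of $L$ above $p$, we get $e(\PP\mid p)=e(\PP\mid\p)\,e(\p\mid p)=1\cdot 2=2$.

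It remains to descend. A prime of $K$ above $p$ lies above $\p$, so its ramification index over $p$ is a multiple of $e(\p\mid p)=2$; it also divides $e(\PP\mid p)=2$ for any $\PP$ of $L$ above it; hence it equals $2$, i.e.\ it occurs to the second power in the factorization of $(p)$ in $K$. The same argument along $\Q\subset F\subset F(t)\subset L$, using that $p$ ramifies in $F$, shows each prime of $F(t)$ above $p$ occurs to the second power, and the case of $L$ was settled above. I do not expect a genuine obstacle here: the only points needing care are the reduction to the case $p\ne 3$ — which is exactly what guarantees there is no wild ramification above $p$ to analyze, the primes above $3$ being treated separately in the later lemmas of this section — and the fact that the argument for $K$ must be routed through $C$ rather than $F$, since the real field $K$ does not contain the imaginary quadratic field $F$.
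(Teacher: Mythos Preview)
Your proof is correct and follows essentially the same route as the paper's: observe $p\ne 3$, note $p$ ramifies in both $C$ and $F$, use the discriminants of $x^3-v^3$ and $x^2+x+1$ to see that $K/C$ and $L/K$ are unramified at $p$, conclude the ramification index in $L$ is exactly $2$, and then descend to $K$ and $F(t)$ by sandwiching. The paper's version is terser---it asserts ``$p$ ramifies in $C$'' without the $d\equiv -m\pmod 4$ justification and compresses the descent into ``and the result follows''---so your added explanations (especially the explicit reason $p\mid D(C)$ and the remark that the $K$ case must be routed through $C$ rather than $F$) are welcome but do not constitute a different argument.
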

\begin{proof}
The prime $p$ ramifies in $C$ and in $F$ with ramification index $2$.
Since $L=K(w)$ and $x^2 + x + 1$ has discriminant $-3$ and $p \ne 3$,
the extension $L/K$ is unramified at $p$.   Similarly, $K/C$ is
unramified at $p$, since $x^3-v^3$ has discriminant $-27v^6$.
Thus each prime ideal
in the factorization of $(p)$ in $L$ 
occurs to the second power, and the result follows.
\end{proof}

\begin{lem}\label{Lemma 5.2}
The extension $F(t)/F$ is ramified at $3$.
\end{lem}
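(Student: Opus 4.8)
The plan is to argue by contradiction. Assume $F(t)/F$ is unramified at $3$; I will deduce $v\in M$, contradicting the standing hypothesis $v\notin M$. The first step is to strengthen the assumption to: $F(t)/F$ is unramified at every prime. Consider the tower $F\subseteq B=F(w)\subseteq L=F(v)$. Since $w$ is a root of $x^2+x+1$ (discriminant $-3$), we have $D(B/F)\mid(3)$; and since $v$ is a root of $x^3-(a+b\sqrt{d})$ with $a+b\sqrt{d}$ a unit, the $B$-basis $\{1,v,v^2\}$ of $L$ has discriminant $-27v^6$, a unit multiple of $27$, so $D(L/B)\mid(27)$. Hence the only rational primes that ramify in $L/\Q$ — and therefore the only ones that can ramify in the subfield $F(t)/\Q$ — are $3$ together with the primes dividing $D(F)$. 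Since $d=3m$ forces $3\nmid D(F)$, every such prime $p\mid D(F)$ has $p\ne3$, and Lemma 5.1 says the primes of $F(t)$ above $p$ occur to the second power, i.e.\ are unramified over $F$. With the hypothesis at $3$ this makes $F(t)/F$ unramified everywhere. As $F(t)$ is the unique degree-$3$ subfield of the cyclic extension $L/F$, the extension $F(t)/F$ is abelian, so $F(t)$ lies in the Hilbert class field of $F$, hence in $M$ (the Hilbert class field is the ring class field of conductor $1$, and so is contained in $M$).

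The second step is to show $B=F(w)\subseteq M$, which holds whether or not $v\in M$; this is exactly the argument in the first paragraph of the proof of Lemma 2.1, and that argument uses neither $3\nmid d$ nor $v\in M$. Indeed, a principal prime ideal $\p=(x+y\sqrt{-3d})$ of $F$ of norm $p=x^2+3dy^2$ satisfies $p\equiv 1\pmod 3$, so $w^{N\p}=w^p=w$, the Artin symbol of $\p$ for $F(w)/F$ fixes $w$, and $\p$ splits completely in $F(w)$; by \cite[Thm.~8.19]{Cox} the primes of $F$ splitting completely in $M$ are exactly these, hence every prime that splits completely in $M$ splits completely in $F(w)$, so $F(w)\subseteq M$. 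Now $[F(t):F]=3$ and $[B:F]=2$ are coprime, so $F(t)\cap B=F$, the compositum $F(t)\cdot B$ has degree $6$ over $F$, and (being contained in $L$, which also has degree $6$ over $F$) $F(t)\cdot B=L$. Therefore $L=F(t)\cdot B\subseteq M$, so $v\in L\subseteq M$, contradicting $v\notin M$. Hence $F(t)/F$ ramifies at $3$.

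The step I expect to be the crux is $B\subseteq M$: that is where the theory of ring class fields genuinely enters, and the only real care needed is checking that the cited paragraph of Lemma 2.1's proof does not covertly rely on hypotheses particular to Section 2. The rest — the discriminant bookkeeping along $F\subseteq B\subseteq L$, the application of Lemma 5.1 at the primes dividing $D(F)$, and the implication ``unramified and abelian over $F$'' $\Rightarrow$ ``contained in the Hilbert class field, hence in $M$'' — I expect to be routine. One background fact to keep in mind, part of the running setup of the paper, is that $t$ generates a cubic field, so that $F(t)$ is genuinely the degree-$3$ layer of $L/F$ with $L=F(t)(w)$ and $[L:F(t)]=2$; this is what makes the Galois-theoretic steps legitimate.
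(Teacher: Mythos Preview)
Your proof is correct. The first step --- using Lemma~5.1 together with the discriminant bounds along $F\subseteq B\subseteq L$ to conclude that $F(t)/F$ is everywhere unramified, hence that $F(t)$ lies in the Hilbert class field of $F$ --- agrees with the paper (which states it more tersely, citing Lemma~5.1 directly). From there the two arguments diverge. The paper proceeds via an explicit Artin-symbol computation: since $t$ then lies in the ring class field $J$ for $\Z[\sqrt{-m}]$, the Frobenius at a prime $\p=(x+y\sqrt{-m})$ of norm $p=x^2+my^2$ fixes $t$, and by the dichotomy of Remark~(3C) it fixes $v$ itself whenever $p\equiv 1\pmod 3$ (equivalently $3\mid y$); hence primes $p=X^2+9mY^2$ split completely in $L$, and Cox's Theorem~8.19 gives $L\subseteq M$. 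You instead note that the Hilbert class field is contained in $M$, so $F(t)\subseteq M$, and separately that $B=F(w)\subseteq M$ by the first-paragraph argument of Lemma~2.1 (which, as you correctly check, uses neither $3\nmid d$ nor $v\in M$); taking the compositum yields $L=F(t)\cdot B\subseteq M$. Your route is shorter and more structural, avoiding the case analysis of Remark~(3C); the paper's version has the compensating virtue of making the link to primes of the shape $X^2+9mY^2$ explicit, in keeping with the paper's recurring representability-by-forms theme.
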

\begin{proof}
Suppose for the purpose of contradiction that the lemma is false.
Then by Lemma 5.1, the cubic cyclic extension $F(t)/F$ is unramified.
Consequently, $F(t)$ is contained in the Hilbert class field
of $F$.  Thus $t$ is contained in the ring class field $J$
for the order $\Z[\sqrt{-m}]$,
so that the Artin symbol in Remark (3C) fixes $t$. 
As noted in Remark (3C),
the Artin map fixes $v$ when  $p = x^2 +m y^2 \equiv 1 \pmod{3}$,
which occurs for example when $3|y$.
It follows that the primes $p = X^2 +9 m Y^2$ split
completely in $L$.  These are the primes that split completely in $M$.
By \cite[Thm. 8.19]{Cox}, we have $L \subset M$,
so that $v \in M$, as contradiction.
\end{proof}

\begin{lem}\label{Lemma 5.3}
The norm of  $D(F(t)/F)$ is equal to $3^8$.
\end{lem}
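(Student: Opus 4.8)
The plan is to localize the computation at the prime $3$ and to show that the relative different exponent of $F(t)/F$ at every prime of $F$ above $3$ equals $4$; taking norms then yields $3^8$ in both the inert and the split case.

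First I would record that $F(t)/F$ is cyclic of degree $3$ (it is the unique cubic subextension of the cyclic sextic extension $L/F$) and that it ramifies only at primes of $F$ above $3$. Indeed, in $L=C(v,w)$ the extensions $K/C$ and $L/K$ are unramified away from $3$, so the rational primes ramifying in $L$ are $3$ together with those dividing $D(C)$; a direct check using $d=3m$ shows these are exactly $3$ and the primes dividing $D(F)$, and by Lemma 5.1 the latter are unramified in $L/F$, hence in $F(t)/F$. Thus $D(F(t)/F)$ is supported at primes above $3$. Since $3\nmid m$, the prime $3$ is unramified in $F=\Q(\sqrt{-m})$, so either $(3)=\q_F$ with $\Nm\q_F=9$, or $(3)=\p_F\p_F'$ with $\Nm\p_F=\Nm\p_F'=3$. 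By Lemma 5.2 some prime above $3$ ramifies in $F(t)/F$, necessarily totally and wildly, since $[F(t):F]=3$ equals the residue characteristic; and in the split case complex conjugation is an automorphism of $F(t)$ fixing $k=\Q(t)$ and interchanging $\p_F$ and $\p_F'$, so both primes ramify. Writing $d_3$ for the common relative different exponent at a ramified prime $\mathfrak P\mid 3$ (so $f=1$ there),
\[
\Nm D(F(t)/F)=\prod_{\mathfrak P\mid 3}\Nm(\mathfrak P)^{d_3}=\Big(\prod_{\mathfrak P\mid 3}\Nm(\mathfrak P)\Big)^{d_3}=\bigl(3^2\bigr)^{d_3},
\]
so it remains to prove $d_3=4$.

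To compute $d_3$ I would pass to the completion $F_{\mathfrak P}$, which is unramified over $\Q_3$, so $3$ is a uniformizer of $F_{\mathfrak P}$. The local extension $F(t)_{\mathfrak Q}/F_{\mathfrak P}$ is cyclic of degree $3$ and totally, wildly ramified. Because $F_{\mathfrak P}/\Q_3$ is unramified we have $(1+3\OO_{F_{\mathfrak P}})^3=1+9\OO_{F_{\mathfrak P}}$, so $1+\mathfrak P^2\OO_{F_{\mathfrak P}}$ consists of cubes; by local class field theory the conductor of $F(t)_{\mathfrak Q}/F_{\mathfrak P}$ therefore divides $\mathfrak P^2$, and since this extension is ramified of degree equal to the residue characteristic it is not tamely ramified, so its conductor is exactly $\mathfrak P^2$. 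By the conductor--discriminant formula (the two nontrivial characters of $\Gal(F(t)_{\mathfrak Q}/F_{\mathfrak P})$ each having conductor $\mathfrak P^2$), the relative discriminant of $F(t)_{\mathfrak Q}/F_{\mathfrak P}$ is $\mathfrak P^4$, and since the extension is totally ramified this equals $\mathfrak P^{d_3}$. Hence $d_3=4$, and $\Nm D(F(t)/F)=(3^2)^4=3^8$. (Alternatively, one can argue that the unique ramification break $j$ of a ramified cyclic extension of degree $p$ of a local field of absolute ramification index $e_0$ satisfies $j\le pe_0/(p-1)$, which for $p=3$ and $e_0=1$ forces $j=1$ and hence $d_3=(p-1)(j+1)=4$.)

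The main obstacle is the upper bound $d_3\le 4$. The naive containment $D(F(t)/F)\mid\bigl(\mathrm{disc}_F(1,t,t^2)\bigr)=(-108\,d\,b^2)$ only yields $d_3\le 4+2v_3(b)$, which is insufficient when $3\mid b$; the local argument above is what pins $d_3$ down uniformly. A secondary point requiring care is the verification that every prime of $F$ above $3$ really does ramify in $F(t)/F$ (handled above via complex conjugation in the split case) and that no prime away from $3$ contributes.
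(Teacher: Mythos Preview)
Your proof is correct; the route differs from the paper's mainly in how you pin down the different exponent $d_3=4$. The paper argues more coarsely: it records the standard bounds $3\le s\le 5$ for the exponent of a totally wildly ramified prime in a cubic extension (citing \cite[pp.~260,~262]{N}), and then invokes the fact that the discriminant of a cyclic cubic extension is the square of an ideal (\cite[p.~266]{N}), forcing $s$ even, hence $s=4$. You instead compute the local conductor directly: since $F_{\mathfrak P}/\Q_3$ is unramified, $1+9\OO_{F_{\mathfrak P}}=(1+3\OO_{F_{\mathfrak P}})^3$ lies in the local norm group, so the conductor is at most $\mathfrak P^2$, and wildness forces equality; the conductor--discriminant formula then gives $\mathfrak P^4$. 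Your parenthetical alternative (bounding the ramification break by $pe_0/(p-1)=3/2$) is essentially the paper's bound $s\le 5$ sharpened, and already suffices without the parity trick. Either way the answer is the same; your argument trades the parity observation for a short local computation, and along the way you make explicit two points the paper leaves implicit---that only primes above $3$ ramify in $F(t)/F$, and that in the split case both $\p_F,\p_F'$ ramify (your complex-conjugation argument is fine here; equivalently one can note that $F(t)/\Q$ is Galois, being the Galois closure of $k/\Q$).
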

\begin{proof}

In $F$, either $(3)=\q$ or $(3)=\p\p'$, where
$\q$ has norm $9$ and $\p, \p'$ have norm $3$.
By Lemma 5.2,
these prime ideals ramify wildly in $F(t)$
with ramification index $3$.  Thus
\begin{equation}\label{5.1}
D(F(t)/F) = \q^s \ \mbox{or}\  D(F(t)/F) = (\p\p')^s
\end{equation}
for some integer $s$ with $3 \le s \le 5$ \cite[pp. 260,262]{N}.
In either case, $D(F(t)/F)$ has norm  $9^s$.
Since $F(t)/F$ is a cubic cyclic extension, 
$D(F(t)/F)$ is equal to the square of an ideal in $F$ \cite[Cor. 2, p. 266]{N}.
Thus by (5.1), $s=4$, so $D(F(t)/F)$ has norm  $3^8$.
\end{proof}

We are now prepared for the proof of Theorem 1.4.
\begin{proof}
Let $p$ be a prime dividing $D(F)$.   By Lemma 5.1,
each prime ideal in the factorization of $(p)$ in $k$
must occur to the first or second power, and those
that occur to the first power are exactly the ones that
ramify in $F(t)$ and in $K$.  If every prime ideal factor
of $(p)$ in $k$ were to occur to the first power, then
$(p)$ would divide $D(K/k)$, contradicting Lemma 2.3.
Thus $p$ ramifies in $k$, so that exactly one prime ideal
$\p_p$ in the factorization of $(p)$ in $k$ occurs to an
odd power, and $\p_p$ has norm $p$.  Since 
$F(t) = k(\sqrt{-m})$, the odd part of 
$D(F(t)/k)$ is the product of the prime ideals of odd norm
in the factorization of $(m)$ in $k$ which occur to odd powers
\cite{V}. Thus the odd part of $D(F(t)/k)$ equals
$\prod \p_p$, where $p$ runs through the odd prime factors of $m$.

Consider now the case when $p=2$.
Since $\p_2$ ramifies wildly in $F(t)$,
$\p_2^e$ exactly divides $D(F(t)/k)$ 
for some integer $e \ge 2$.
Arguing as in the paragraph above (2.7), we see that
$e=2$ when $d \equiv  3 \pmod{4}$ and $e \in \{2,3\}$
when $d \equiv 2 \pmod{4}$. Thus
\begin{equation}\label{5.2}
D(F(t)/k) = \p_2^e \prod_{p} \p_p ,
\end{equation}
where $p$ runs through the odd prime factors of $m$, and
where $\p_2$ is to be interpreted as $1$ when $d \equiv 1 \pmod{4}$.

Taking norms on both sides of (5.2), we have
\begin{equation}\label{5.3}
D(F(t))/D(k)^2 =
\begin{cases}
-m, \quad \quad & d \equiv 1 \pmod{4} \\
-4m,   \quad  \quad  & d \equiv 3 \pmod{4} \\
-2^{e-1} m, \quad  \quad  & d \equiv 2 \pmod{4}.
\end{cases}
\end{equation}
By Lemma 5.3,
\begin{equation}\label{5.4}
D(F(t)) = 3^8 D(F)^3 =
\begin{cases}
-m^3 3^8, \quad \quad & d \equiv 1 \pmod{4} \\
-m^3 2^6 3^8, \quad  & d \equiv 2,3 \pmod{4}.
\end{cases}
\end{equation}
By (5.3) and (5.4),
\begin{equation}\label{5.5}
D(k)^2=
\begin{cases}
m^2 3^8, \quad \quad & d \equiv 1 \pmod{4} \\
2^4 m^2 3^8, \quad  \quad  & d \equiv 3 \pmod{4} \\
2^{7-e} m^2 3^8, \quad  \quad  & d \equiv 2 \pmod{4}.
\end{cases}
\end{equation}
This shows that $e$ must be odd, so $e=3$.  Finally,
since $D(k)$ is negative \cite[Prop. 2.15]{N},
we obtain the desired result $D(k) = 81 D(F)$.
\end{proof}

\begin{rem}[\bf{5G}]
When $3 \mid d$ and $v \notin M$, $D(L/F)$ has norm $3^{18}$.
To see this, 
note that by the proof of Lemma 5.3, the 
prime ideal factorization of $(3)$
in $F(t)$ is either $\QQ^3$ or $(\PP\PP')^3$,
where $\QQ$ has norm $9$ and $\PP, \PP'$ have norm $3$.
Since these prime ideals occur to the odd power $3$,
and since $L=F(t)(\sqrt{-3})$, it follows from \cite{V} that
$D(L/F(t))$ equals $\QQ$ or $\PP\PP'$.  In either case,
$D(L/F(t))$ has norm $9$, so that $D(L) = 9 D(F(t))^2$.
By Lemma 5.3, $D(F(t))=3^8 D(F)^3$.
Combining these last two equalities, we obtain
the desired result $D(L) = 3^{18} D(F)^6$.
\end{rem}

\section{Ring class and ray class fields}
Recall that $v$ is the real cube root of the 
fundamental unit $u=a + b \sqrt{d}$.
The following theorem gives 
explicit criteria in terms of $a$ and $b$ for $v$ to lie 
in the ring class field $M$.
\begin{thm}\label{Thm 6.1}
We have $v \in M$ if and only if
\begin{equation}\label{6.1}
a \equiv 0 \pmod{9} \quad \mbox{or} \quad 
\begin{cases}
a \equiv \pm 2 \pmod{9}  \quad & \ \mbox{when} \ \ a^2 - d b^2 =-1 \\
a \equiv \pm 1 \pmod{27}  \quad & \ \mbox{when} \ \ a^2 - d b^2 =+1.
\end{cases}
\end{equation}
\end{thm}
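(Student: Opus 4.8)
The plan is to reduce $v\in M$ to a ramification statement at the prime $3$, then to a congruence on the fundamental unit $u=a+b\sqrt d$ coming from Kummer theory, and finally to evaluate that congruence. Combining Lemma 2.1 with Lemmas 4.1--4.3 (when $3\nmid d$) and Lemma 3.1 with Lemmas 5.1--5.2 (when $3\mid d$) yields the equivalence: $v\in M$ holds if and only if the cyclic cubic extension $F(t)/F$ is unramified. Here $x^3-3\epsilon x-2a$ with $\epsilon=a^2-db^2=\pm1$ has discriminant $-108\,d\,b^2=(6b\sqrt{-3d}\,)^2\in F^{\times2}$, so $F(t)/F$ is cyclic of degree $3$; by the quoted lemmas the only rational prime that can ramify in $F(t)/F$ is $3$, while a short Newton-polygon computation at $2$ --- using $a^2-db^2=\pm1$ to pin down the $2$-adic valuation of $a\pm1$ --- shows $F(t)/F$ is automatically unramified at $2$, which is why $(6.1)$ involves no power of $2$. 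Since $B=F(w)$ contains a primitive cube root of unity and $L/F(t)$ is unramified at $3$, the condition becomes: the cubic Kummer extension $L/B=B(u^{1/3})/B$ is unramified above $3$.

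Pass to a completion $B_{\PP}$ at a prime $\PP\mid3$. In every case $\PP^{2}=(3)$ in $B_{\PP}$, and $B_{\PP}$ is one of the two local fields $\Q_3(\sqrt{-3})$ (degree $2$) or $\Q_3(\sqrt3,\sqrt{-1})$ (degree $4$), the choice being dictated by the splitting of $3$ in $C=\Q(\sqrt d)$; in all cases $u$ is a local unit. Then $B_{\PP}(u^{1/3})/B_{\PP}$ is unramified exactly when the class of $u$ in $B_{\PP}^{\times}/(B_{\PP}^{\times})^{3}$ lies in the order-$3$ subgroup cut out by the unramified cubic extension of $B_{\PP}$; since $1+\PP^{4}\subseteq(B_{\PP}^{\times})^{3}$ (the cube map on the principal-unit filtration $1+\PP^{i}$ stabilizes at $i=4$ because $\PP^{2}=(3)$), this is a congruence on $u$ modulo $\PP^{4}$, i.e.\ essentially modulo $9\,\OO_C$. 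A closer analysis of the filtration should show that when $a^2-db^2=-1$ the mod-$9$ congruence is the binding one, whereas when $a^2-db^2=+1$ it is automatically satisfied, so that the decisive obstruction is pushed one level higher, to modulo $27\,\OO_C$; this is the origin of the two moduli in $(6.1)$ (note that $a^2-db^2=-1$ is impossible when $3\mid d$, since then a prime $\equiv3\pmod4$ divides $d$).

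With the modulus $9$ (resp.\ $27$) in hand I would compute directly, splitting on $d\bmod3$. When $3\nmid d$ one has $\gcd\bigl(3,\lvert(\OO_C/3)^{\times}\rvert\bigr)=1$, so the ``mod $3$'' part of the obstruction is vacuous and only the higher unit filtration matters; when $3\mid d$, $3$ is ramified in $C$ and the local computation is over $\Q_3(\sqrt{-3})$ or its ramified degree-$4$ extension. In each case one eliminates $b$ via $db^2=a^2\mp1$, so that the cube condition on $a+b\sqrt d$ collapses to a condition on $a$ alone; after the bookkeeping this is exactly $(6.1)$: $a\equiv0\pmod9$, or $a\equiv\pm2\pmod9$ when $a^2-db^2=-1$, or $a\equiv\pm1\pmod{27}$ when $a^2-db^2=+1$.

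The main obstacle is the middle step: determining the exact modulus, and the exact subgroup of $B_{\PP}^{\times}/(B_{\PP}^{\times})^{3}$ that corresponds to genuinely unramified --- as opposed to merely potentially unramified --- behavior, together with a clean explanation of why the sign of $a^2-db^2$ shifts the decisive modulus from $9$ to $27$. Because $3$ is at once the residue characteristic and (up to the harmless factor $2$) the degree of the Kummer extension, the wild range of the unit filtration is precisely where the discrimination happens; the most transparent organization is probably to factor $x^3-(a+b\sqrt d)$ over $B_{\PP}$ by Newton polygons after a substitution normalizing $u$ modulo cubes, read off the exponent of $\PP$ in the relative discriminant $D\bigl(B_{\PP}(u^{1/3})/B_{\PP}\bigr)$, and match it against the global values --- $D(F(t)/F)$ of norm $1$ if $v\in M$, of norm $3^{4}$ or $3^{8}$ if $v\notin M$ (according as $3\nmid d$ or $3\mid d$) --- furnished by Lemmas 4.5 and 5.3.
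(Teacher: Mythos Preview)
Your reduction of $v\in M$ to the unramifiedness of $F(t)/F$ via Lemmas 2.1, 3.1 and their Section~4--5 counterparts is exactly what the paper does (it cites 2.1, 3.1, 4.5, 5.3). Where you diverge is in the second half: the paper does not carry out any local computation at all---it simply invokes \cite[Thm.~6]{Herz}, whose proof already establishes that the cyclic cubic $F(t)/F$ is unramified if and only if the congruences in (6.1) hold. Your local Kummer analysis over $B_\PP$ is thus an attempt to reprove Herz's criterion from scratch.

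That approach is sound in outline and would yield an independent proof, but as written it is a plan, not a proof: the ``closer analysis of the filtration'' that is supposed to explain the $9$-versus-$27$ dichotomy is never carried out, and you say as much in your final paragraph. One concrete slip: the assertion that $L/F(t)$ is unramified at $3$ is only valid when $3\nmid d$ (Lemma~4.2); when $3\mid d$ one has $D(L/F(t))=(3)$ by Remark~(3D). The equivalence ``$F(t)/F$ unramified at $3$ $\Leftrightarrow$ $L/B$ unramified at $3$'' is still true in that case, but it requires comparing ramification indices (both $B/F$ and $L/F(t)$ contribute an index $2$ at $3$, and $F/\Q$ is unramified at $3$ since $F=\Q(\sqrt{-m})$ with $3\nmid m$), not the blanket unramifiedness you claim.

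In short: the skeleton is right and matches the paper up to the point where the paper hands off to Herz; your proposed replacement for Herz's argument is plausible but incomplete, and has a repairable error in the $3\mid d$ bridge to $L/B$.
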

\begin{proof}
By the proof of \cite[Thm. 6]{Herz},  (6.1) holds
if and only if $F(t)/F$ is unramified.
The result now follows from Lemmas 2.1, 3.1, 4.5, and 5.3.
\end{proof}

Recall that $M_3$ denotes the ring class field of $F$
for the order $\Z[\sqrt{-27d}]$.  It follows from
\cite[Thm. 7.24]{Cox}  that $|M_3:M|=3$.
Consider the example $v = (5/2 + \sqrt{21}/2)^{1/3}$ for $d=21$.
By Theorem 6.1, $v$ does not lie in $M$, since
$a = 5/2 \equiv 16 \not\equiv \pm 1 \pmod{27}$.
On the other hand, this $v$ does lie in $M_3$.
In fact, Theorem 6.2 shows that $v \in M_3$ for every $v$,
i.e., every $u_p$ is a cubic residue mod the primes $p=x^2 +27dy^2$.
This fact had been conjectured by the first author, and the proof is due
to the third author.
For an extension of Theorem 6.2, see Conjecture 7.8.

Note that for each squarefree $d>1$, 
the fundamental unit $u$ can be written in the form
$u = (m + n\sqrt{d})/2$ with 
nonzero integers $m, n$ such that  $m^2 - dn^2 =\pm4$.
\begin{thm}\label{Thm 6.2}
Every $v$ lies in $M_3$. 
\end{thm}
\begin{proof}
We will utilize the  
integral quadratic forms below when $d \equiv 1 \pmod{4}$:
\begin{equation}\label{6.2}
p= A^2 +3dB^2 \Longrightarrow p=x^2 + xy + \frac{3d+1}{4} y^2
\end{equation}
and
\begin{equation}\label{6.3}
p= A^2 +27dB^2 \Longrightarrow p=x^2 + xy + \frac{27d+1}{4} y^2,
\end{equation}
where $x=A-B$ and $y=2B$.

\noindent {\bf Case 1:}
$m^2 - dn^2 = -4$.   

\noindent
In this case we cannot have $d \equiv 3 \pmod{4}$.
First assume that $9\mid m$ and $p= A^2 +3dB^2$.
If $d \equiv 2 \pmod{4}$, then $v \in M$
by \cite[Thm. 5.1]{S} with $k=2$.
If $d \equiv 1 \pmod{4}$, then $v \in M$
by (6.2) and \cite[Thm. 5.1]{S} with $k=1$.

Next assume that $9\nmid m$ and $p= A^2 +27dB^2$.
If $d \equiv 2 \pmod{4}$, then $v \in M_3$
by \cite[Thm. 5.1]{S} with $k=6$.
If $d \equiv 1 \pmod{4}$, then $v \in M_3$
by (6.3) and \cite[Thm. 5.1]{S} with $k=3$.
This completes the proof in Case 1.

\noindent {\bf Case 2:}
$m^2 - dn^2 = 4$.   

\noindent
First assume that $9\mid m$ and $p= A^2 +3dB^2$.
If $d \equiv 2,3 \pmod{4}$, then $v \in M$
by \cite[Thm. 5.3]{S} with $k=2$.
If $d \equiv 1 \pmod{4}$, then $v \in M$
by (6.2) and \cite[Thm. 5.3]{S} with $k=1$.

Next assume that $9\nmid m$.
Since $(m-2)(m+2)=dn^2$, we may choose the sign of $m$ so that
$\ord_3(m-2) \ge \ord_3 n$, where $\ord_3$ denotes the $3$-adic order.
There is no loss of generality in fixing this sign, since
the conjugate $v'$ of $v$ satisfies $vv'=\pm1$. 
We will consider separately the cases $d\equiv 2,3\pmod 4$,
$d\equiv 5\pmod 8$, and $d\equiv 1\pmod 8$.
\par  If $d\equiv 2,3\pmod 4$, $9\nmid \frac{m-2}{(m-2,n)}$ and $p=A^2+27dB^2$,
then $v \in M_3$ by \cite[Thm. 5.5]{S} with $k=2 \cdot 3$.
If $d\equiv 2,3\pmod 4$, $9\mid \frac{m-2}{(m-2,n)}$ and $p=A^2+3dB^2$, 
then $v \in M$ by \cite[Thm. 5.5]{S} with $k=2 \cdot 1$.
(We can ignore the restriction $p \nmid n$ in 
\cite[Thm. 5.5]{S} because $u \equiv 1 \pmod p$ when $p \mid n$.)
\par  If $d\equiv 5\pmod 8$, $9\nmid \frac{m-2}{(m-2,n)}$ and $p=A^2+27dB^2$,
then $v \in M_3$
by (6.3) and \cite[Thm. 5.5]{S} with $k=1 \cdot 3$.
If $d\equiv 5\pmod 8$, $9\mid \frac{m-2}{(m-2,n)}$ and $p=A^2+3dB^2$,
then $v \in M$
by (6.2) and \cite[Thm. 5.5]{S} with $k=1 \cdot 1$.
\par From now on assume that $d\equiv 1\pmod 8$. 
Set $r=\ord_2\frac{4(m-2)}{(m-2,n)^2}$.
If $r>0$, $r\equiv 0,1\pmod 3$,  
$9\nmid \frac{m-2}{(m-2,n)}$ and $p=A^2+27dB^2$,
then $v \in M_3$
by \cite[Thm. 5.5]{S} with $k=2 \cdot 3$.
If $r>0$, $r\equiv 0,1\pmod 3$,  
$9\mid \frac{m-2}{(m-2,n)}$ and $p=A^2+3dB^2$,
then $v \in M$
by \cite[Thm. 5.5]{S} with $k=2 \cdot 1$.

Finally consider the case where either $r=0$ or $r\equiv 2\pmod 3$.
If $9\nmid \frac{m-2}{(m-2,n)}$ and $p=A^2+27dB^2$,
then $v \in M_3$
by (6.3) and \cite[Thm. 5.5]{S} with $k=1 \cdot 3$.
If $9\mid \frac{m-2}{(m-2,n)}$ and $p=A^2+3dB^2$,
then $v \in M$
by (6.2) and \cite[Thm. 5.5]{S} with $k=1 \cdot 1$.
This completes the proof in Case 2.
\end{proof}

Let $\OO$ denote the order $\Z[\sqrt{-3d}]$ in $F$.
Let $F_{(N)}$ denote the ray class field of $F$ modulo $(N)$.
Define the class field $F_{(N),\OO}$ 
as in \cite[p. 853]{Cho}.
By \cite[Thm. 1]{Cho} with $n=3d$,
\begin{equation}\label{6.4}
M = F_{(1),\OO}, \quad M_3 = F_{(3),\OO}.
\end{equation}
By \cite[Thm. 1]{Cho},
\begin{equation}\label{6.5}
F_{(N)} \subset  F_{(N),\OO} \subset  F_{(NT)},
\end{equation}
where $T$ denotes the conductor of the order $\OO$.
Theorem 6.3 relates the ring class fields $M$ and $M_3$
to ray class fields of $F$.
\begin{rem}[\bf{6H}]
Analogous to (6.4), we have $M_6 = F_{(6),\OO}$,
where $M_6$ is the ring class field of $F$ for the
order $\Z[\sqrt{-108d}]$.
\end{rem}
\begin{thm}\label{Thm 6.3}
If $3 \nmid d$, then 
\begin{equation}\label{6.6}
\begin{cases}
M_3=F_{(6)}  \quad & \ \mbox{when} \ \ d \equiv 1 \pmod{4} \\
M_3=F_{(3)}  \quad & \ \mbox{when} \ \ d \equiv 2,3 \pmod{4}.
\end{cases}
\end{equation}
If $3\mid d$, then
\begin{equation}\label{6.7}
\begin{cases}
M=F_{(6)}  \quad & \ \mbox{when} \ \ d \equiv 1 \pmod{4} \\
M=F_{(3)}  \quad & \ \mbox{when} \ \ d \equiv 2,3 \pmod{4}.
\end{cases}
\end{equation}
\end{thm}
\begin{proof}
Suppose first that 
$3 \nmid d$ and
$d \equiv 2,3 \pmod{4}$, so that $T=1$.
Then by (6.4) and (6.5) with $N=3$,  we have
$M_3=F_{(3)}$.  Next suppose that
$3 \nmid d$ and
$d \equiv 1 \pmod{4}$, so that $T=2$.
Then by (6.4) and (6.5) with $N=3$,  we have
$M_3 \subset F_{(6)}$.
To show that $M_3=F_{(6)}$,
it suffices to show that
\begin{equation}\label{6.8}
|M_3:F| = |F_{(6)}:F|.
\end{equation}
Using the formula $\phi((6))/2 \ $ \cite[p. 50]{Chil}
for the ray class number on the right,
and the formula \cite[Thm. 7.24]{Cox}
for the ring class number on the left,
we see that both members of (6.8) are equal to
\begin{equation}\label{6.9}
\begin{cases}
3h_F,  \quad & \ \mbox{when} \ \ d \equiv 5 \pmod{8} \\
9h_F, \quad & \ \mbox{when} \ \ d \equiv 1 \pmod{8},
\end{cases}
\end{equation}
where $h_F$ is the class number of $F$.
This completes the proof of (6.6).

Now suppose that $d = 3m$, so that $F=\Q(\sqrt{-m})$
and $\OO = \Z[3 \sqrt{-m}]$.
Let $g$ denote the ordered pair of Kronecker symbols
$g=((-m/3),(-m/2))$.

First assume that $d \equiv 1 \pmod{4}$, so that $T=6$.
Then by (6.4) and (6.5) with $N=1$,  we have
$M \subset F_{(6)}$.
To show that $M=F_{(6)}$,
it suffices to show that
\begin{equation}\label{6.10}
|M:F| = |F_{(6)}:F|.
\end{equation}
Appealing again to \cite{Cox} and \cite{Chil},
we see that both members of (6.10) are equal to
\begin{equation}\label{6.11}
\begin{cases}
4h_F,  \quad & \ \mbox{when} \ \ g=(-1,1) \\
12h_F, \quad & \ \mbox{when} \ \ g=(-1,-1) \\
2h_F, \quad & \ \mbox{when} \ \ g=(1,1) \\
6h_F, \quad & \ \mbox{when} \ \ g=(1,-1).
\end{cases}
\end{equation}
Finally assume that $d \equiv 2,3 \pmod{4}$, so that $T=3$.
Then by (6.4) and (6.5) with $N=1$,  we have
$M \subset F_{(3)}$.
To show that $M=F_{(3)}$,
it suffices to show that
\begin{equation}\label{6.12}
|M:F| = |F_{(3)}:F|.
\end{equation}
Again by  \cite{Cox} and \cite{Chil},
we see that both members of (6.12) are equal to
\begin{equation}\label{6.13} 
\begin{cases}
2h_F, \quad & \ \mbox{when} \ \ (-m/3)=1 \\
4h_F, \quad & \ \mbox{when} \ \ (-m/3)=-1.
\end{cases}
\end{equation}
This completes the proof of (6.7).
\end{proof}

\section{Generalizations for integers $u \in \Q(\sqrt{d})$ with cubic norms}
For each squarefree $d >1$, let $\OO(d)$ denote the ring of integers
in $\Q(\sqrt{d})$, and write $f(d)$ for the fundamental unit in $\OO(d)$.  
Let $S_d$ denote the set of $u \in \OO(d)$
for which the norm of $u$ is a cube, $u$ is not the cube of an element in
$\OO(d)$, and $u$ is not divisible in $\OO(d)$ by the cube of a rational prime.
(The notation $u$ will no longer be restricted solely
for fundamental units.)
Denote the norm of $u$ by $N(u)=n^3$, and let $\nu$ denote the real cube root
$u^{1/3}$.

Let $S_d^*$ denote the subset of $u \in S_d$ 
such that $p\mid d$ for each rational
prime $p$ dividing $u$.
For example, $S_d^*$ contains the fundamental unit $f(d)$.
More generally, whenever $\mu \in \OO(d)$ is not divisible by a rational prime,
$S_d^*$ contains 
\begin{equation}\label{7.1}
\mu^3 f(d)^{\pm 1}.
\end{equation}
Examples of elements in $S_d^*$ not of the form (7.1) are
\begin{equation}\label{7.2}
17+2\sqrt{79},\ 13+\sqrt{142},\ 14+\sqrt{223},\ (11+\sqrt{229})/2,
\ 28+3\sqrt{235},\ 77+2\sqrt{254},
\end{equation}
whose norms are $-27,\ 27,-27,-27,-11^3, 17^3$, respectively.

Let $P_d$ denote the set of $\mu = a + b\sqrt{d} \in \OO(d)$
with norm $N(\mu)=n^3$ satisfying the following conditions:
\begin{align*}
n \equiv 2, 5, 8 \pmod{9} &\implies
	a \equiv 0, \pm2, \pm7, \pm9, \pm11  \pmod{27};\\
n \equiv 1 \pmod{9} &\implies
        a \equiv 0,\pm1, \pm 9 \pmod{27};\\
n \equiv 4 \pmod{9} &\implies
        a \equiv 0,\pm8, \pm 9 \pmod{27};\\
n \equiv 7 \pmod{9} &\implies
        a \equiv 0,\pm9, \pm10 \pmod{27};\\
n \equiv 0 \pmod{9} &\implies
        a \equiv \pm4, \pm5, \pm13 \pmod{27};\\
n \equiv \pm3 \pmod{9} &\implies
        a \equiv 0, \pm4, \pm5, \pm9, \pm13 \pmod{27}.
\end{align*}
When $u$ is a fundamental unit (so that $n=\pm1$), 
it follows from Theorem 6.1 that
$v \in M$ if and only if $u \in P_d$.  Theorem 7.3 shows that
this equivalence holds for a more general set of $u$ under the condition that
the class number $h(d)$ is not a multiple of $3$.
We conjecture that the condition on the class number can be dropped.
The proof of Theorem 7.3 depends on the following two lemmas.

\begin{lem} \label{Lem 7.1}
Let $u \in S_d$.  Then the principal ideal
$(u)$ factors in $\OO(d)$  as
\begin{equation}\label{7.3}
(u) = \A^3 q \QQ,
\end{equation}
where $\A$ and $\QQ$ are ideals, and
$q$ is a squarefree integer with $q=N(\QQ)$,
where each rational prime factor of $q$ splits in $\OO(d)$.
\end{lem}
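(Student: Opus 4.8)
The plan is to factor the principal ideal $(u)$ into prime ideals of $\OO(d)$ and to examine, one rational prime $p$ at a time, the prime ideals lying above $p$. Two hypotheses do all the work. First, since $N(u)=n^3$, the ideal $(u)$ has norm $|n|^3$, a perfect cube; hence for every rational prime $p$ the exponent of $p$ in $|n|^3$ is divisible by $3$, and this exponent equals $2\,\mathrm{ord}_{\p}(u)$ when $p$ is inert (so $(p)=\p$), equals $\mathrm{ord}_{\p}(u)$ when $p$ ramifies (so $(p)=\p^2$), and equals $\mathrm{ord}_{\p}(u)+\mathrm{ord}_{\overline{\p}}(u)$ when $p$ splits (so $(p)=\p\,\overline{\p}$). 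Second, the hypothesis that $p^3\nmid u$ in $\OO(d)$ says precisely that $\mathrm{ord}_{\p}(u)\le 2$ in the inert case, $\mathrm{ord}_{\p}(u)\le 5$ in the ramified case, and $\min\{\mathrm{ord}_{\p}(u),\,\mathrm{ord}_{\overline{\p}}(u)\}\le 2$ in the split case. (The hypothesis that $u$ is not a cube in $\OO(d)$ is not needed here; dropping it merely allows $q=1$ and $\QQ=(1)$.)

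First I would dispose of the inert primes: $3\mid 2\,\mathrm{ord}_{\p}(u)$ forces $3\mid\mathrm{ord}_{\p}(u)$, which with $\mathrm{ord}_{\p}(u)\le 2$ gives $\mathrm{ord}_{\p}(u)=0$, so no inert prime divides $N(u)$. For a ramified prime, $3\mid\mathrm{ord}_{\p}(u)\le 5$ gives $\mathrm{ord}_{\p}(u)\in\{0,3\}$, so $\p^{\mathrm{ord}_{\p}(u)}$ is the cube of an ideal and is absorbed into the $\A^3$-part. For a split prime $p$, write $\mathrm{ord}_{\p}(u)=3a+r$ and $\mathrm{ord}_{\overline{\p}}(u)=3b+s$ with $r,s\in\{0,1,2\}$; divisibility of $r+s$ by $3$ forces $(r,s)\in\{(0,0),\,(1,2),\,(2,1)\}$, while $\min\le 2$ rules out the possibility that $(r,s)\ne(0,0)$ with both orders $\ge 3$. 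Hence in the nontrivial case the ``remainder'' above $p$ is exactly $\p^2\,\overline{\p}$ or $\p\,\overline{\p}^2$, that is, $(p)$ times one of $\p,\overline{\p}$; write $\QQ_p$ for that chosen prime above $p$.

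To finish I would set $\A=\prod_{\p}\p^{\lfloor\mathrm{ord}_{\p}(u)/3\rfloor}$, let $q$ be the product of the pairwise distinct split rational primes $p$ whose remainder is nontrivial, and put $\QQ=\prod_{p\mid q}\QQ_p$. Comparing exponents at each prime ideal of $\OO(d)$ — in the split case, matching $\p^{3a}$ from $\A^3$, the factor $\p\,\overline{\p}$ from $q$, and the leftover $\p$ or $\overline{\p}$ from $\QQ$; in the ramified case, matching $\p^3$ from $\A^3$ alone — yields $(u)=\A^3\,q\,\QQ$. Then $q$ is squarefree because each such prime occurs once, $N(\QQ)=\prod_{p\mid q}N(\QQ_p)=\prod_{p\mid q}p=q$, and every prime factor of $q$ splits in $\OO(d)$ by construction. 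The only step requiring real care is the split-prime case: the delicate point is that the cube-free hypothesis on $u$ — entering through $\min\le 2$ — is exactly what prevents the split remainder from being anything other than trivial or $(p)\,\QQ_p$, after which the exponent bookkeeping must be verified separately in the subcases $(r,s)=(0,0),\,(1,2),\,(2,1)$.
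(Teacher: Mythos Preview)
Your argument is correct and follows essentially the same prime-by-prime analysis as the paper's proof: rule out inert primes via the cube-free hypothesis, absorb ramified contributions into $\A^3$, and in the split case peel off a cube to leave $\p^r\overline{\p}^s$ with $(r,s)\in\{(0,0),(1,2),(2,1)\}$, i.e., either trivial or $(p)$ times a prime of norm $p$. One small correction of emphasis: your remark that the cube-free condition ``$\min\le 2$'' is the delicate point in the split case is a red herring---the shape of the remainder there follows purely from $3\mid(r+s)$ with $r,s\in\{0,1,2\}$, and (as in the paper) the hypothesis $p^3\nmid u$ is genuinely used only to exclude inert primes.
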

\begin{proof}
Let $\p$ be a prime ideal factor of $(u)$
lying above a rational prime $p$,  
so that for some $e \ge 1$, $\p^e || (u)$.
We will use the term
``$p$-part" of $(u)$ to denote the contribution of the
ideals above $p$ to the ideal factorization of $(u)$.
As  $u \in S_d$, we can write $n^3 = N(u) = u u'$,
where $u'$ is the conjugate of $u$ in $\OO(d)$.

Suppose first that $p$ does not split in $\OO(d)$.
Then $\p^{2e}$ exactly divides $(uu')=(n^3)$,
so that $3 \mid e$.  Thus the $p$-part of 
$(u)$ is a cube, which can be absorbed in (7.3)
by $\A^3$.

Now suppose that $p$ splits and write $(p) = \p\p'$.
Since $\p^e || (u)$, taking norms yields $p^e \mid n^3$.
Moreover, if $\p' \nmid (u)$, then $p^e || n^3$,
so that $3 \mid e$ and again the $p$-part of
$(u)$ is a cube that can be absorbed by $\A^3$.
It remains to consider the case when $\p' \mid (u)$.
In this case, $p^i$ divides $u$ for some $i \in \{1,2\}$
with $e \ge i$.
(We cannot have $i>2$ by definition of $S_d$.)
We may assume that $3 \nmid e$, otherwise we revert back
to the previous situations  where the $p$-part is a cube.
We have $\p^{e-i}||(u/p^i)$.
By taking norms, $p^{e-i}||(n^3/p^{2i})$.
Thus $p^{e+i} ||n^3$,so that $3|(e+i)$.
Therefore $e =2i + 3k$ for some $k \in \Z$.
Since $-i \le e-2i = 3k$, we must have $k \ge 0$.
The $p$-part of $(u)$ is $p^i\p^{e-i} = p^i\p^i \p^{3k}$.
The cube $\p^{3k}$ can be absorbed by $\A^3$.
Taking the product of the 
$p$-parts of $(u)$ over all $p$, we could
obtain (7.3), where $\QQ$ is the product of the $\p^i$
and $q=N(\QQ)$ is the product of the $p^i$, but
we need $i=1$ for every $p$ in order to ensure that $q$ is squarefree.
Fortunately, we can rearrange each product $p^2\p^2$
so that the exponents equal 1, using
$p^2\p^2 = p \p' \p^3$.
\end{proof}

Let $R_d$ be the set of $\beta \in \OO(d)$ having the form
$\beta=nr +ns \sqrt{d}$,  where $r +s \sqrt{d}$  has norm $n$
(so that $N(u) = n^3$).  For example, $\beta=(31 + 155\sqrt{5})/2 \in R_5$
with $N(\beta)= -31^3$. An example of an element of $S_d$ that is not
in $R_d$ is $u=1376 +387\sqrt{79}$, for which
$N(u)= -215^3$.

We provide  a  computer-generated proof via Mathematica
for the following lemma.

\begin{lem}\label{Lem 7.2}
Let $\beta, \gamma \in R_d$, $\mu \in \OO(d)$, with norms $N(\mu)$
and $N(\gamma)$ both nonzero modulo $3$. Then
\begin{equation}\label{7.4}
\beta \in P_d \Longleftrightarrow  \beta \mu^3 \in P_d 
\end{equation}
and
\begin{equation}\label{7.5}
(\beta \in P_d \ \mbox{and} \ \gamma \in P_d)
\Longrightarrow  \beta \gamma  \in P_d.
\end{equation}
\end{lem}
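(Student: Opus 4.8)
The plan is to reduce both equivalences in the lemma to finite congruence checks modulo $27$ and to carry these out by computer. First I would put everything in coordinates. An element $\beta\in R_d$ has the shape $\beta=nr+ns\sqrt{d}$ with $r^2-ds^2=n$, so that $N(\beta)=n^3$; likewise $\gamma=mr'+ms'\sqrt{d}$ with $r'^2-ds'^2=m$, and $\mu=c+e\sqrt{d}$ with $c^2-de^2=:\ell$. Expanding,
\[
\mu^3=(c^3+3cde^2)+(3c^2e+de^3)\sqrt{d},
\]
so the rational parts of the relevant products are
\[
a_{\beta\mu^3}=n\bigl(r(c^3+3cde^2)+sd(3c^2e+de^3)\bigr),\qquad
a_{\beta\gamma}=nm\bigl(rr'+dss'\bigr),
\]
with norms $N(\beta\mu^3)=(n\ell)^3$ and $N(\beta\gamma)=(nm)^3$. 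Substituting $n=r^2-ds^2$, $m=r'^2-ds'^2$, $\ell=c^2-de^2$ makes the rational parts $a_\beta=nr$, $a_\gamma=mr'$, $a_{\beta\mu^3}$, $a_{\beta\gamma}$ and the norm cube roots $n$, $m$, $n\ell$, $nm$ into explicit polynomials in $r,s,c,e,d$ (respectively $r,s,r',s',d$).

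The decisive observation is that, by its very definition, membership of an element in $P_d$ depends only on the residue of its rational part modulo $27$ and on the residue modulo $9$ of the cube root of its norm. By the polynomial formulas above, both of these residues are determined by the residues of the parameters modulo $27$; hence each of (7.4) and (7.5) is a statement quantified over tuples in $(\Z/27\Z)^5$. I would therefore have Mathematica enumerate all such tuples, impose the relevant side condition ($\ell\not\equiv0\pmod 3$ for (7.4), $m\not\equiv0\pmod 3$ for (7.5)), and verify in every admissible case that $\beta\in P_d\Leftrightarrow\beta\mu^3\in P_d$ and that $\beta,\gamma\in P_d\Rightarrow\beta\gamma\in P_d$. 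Since every residue class modulo $27$ contains a squarefree integer $>1$, running over all $d\bmod 27$ covers every $d$ to which the lemma applies; and because $2$ is a unit modulo $27$, the half-integral coordinates that occur when $d\equiv1\pmod 4$ cause no trouble — one simply lets all of $r,s,c,e$ (and $r',s'$) range over $\Z/27\Z$, which merely enlarges the set of tuples tested.

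The main obstacle is bookkeeping rather than ideas: one must feed the $P_d$-test the cube root $n$ (resp.\ $n\ell$, $nm$) of the norm and not the norm itself, must respect the asymmetry in the hypotheses — the restriction is imposed on $N(\mu)$ and $N(\gamma)$ but deliberately not on $N(\beta)$, and it is exactly the admissibility of omitting any condition on $N(\beta)$ that the exhaustive check must confirm — and must set up the $d\equiv1\pmod 4$ coordinates consistently. Granting this, (7.4) and (7.5) follow from the routine verification of the roughly $27^5$ residue tuples.
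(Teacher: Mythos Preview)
Your proposal is correct and follows essentially the same approach as the paper: both reduce the lemma to an exhaustive Mathematica check over quintuples of residues modulo $27$, imposing exactly the side condition $N(\mu)\not\equiv0\pmod3$ (respectively $N(\gamma)\not\equiv0\pmod3$) and verifying the equivalence and implication case by case. The paper carries out precisely this enumeration over the resulting $9{,}565{,}938$ admissible quintuples and refers to an online notebook for the details; your additional remarks on half-integer coordinates and on the asymmetric hypotheses are sound but do not change the method.
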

\begin{proof}
Write  $\mu = x+y \sqrt{d}$ and $\beta = nr+ns\sqrt{d}$
with $n=r^2 - d s^2$, so that $N(\beta)= n^3$.
Create in Mathematica 
the master set of $9{,}565{,}938$ quintuples $(r,s,x,y,d)$
modulo $27$ for which $x^2 -d y^2$ is nonzero modulo $3$.
Compute the subset of the master set
for which $(nr+ns\sqrt{d})(x+y\sqrt{d})^3 \in P_d$.
This turns out to be exactly the same subset for which
$\beta=nr+ns\sqrt{d} \in P_d$, thereby proving (7.4).

Next, write $\gamma =mx+my\sqrt{d}$
with $m=x^2 - dy^2$, so that $N(\gamma)= m^3$.
Using the same master set, compute the subset for which
$(nr+ns\sqrt{d})(mx+my\sqrt{d}) \in P_d$.
This turns out to contain the subset for which both
of these factors lie in $P_d$, thus proving (7.5).
The details of the Mathematica proof are given in \cite{EVV}.
\end{proof}

For $u \in S_d$,
recall that $\nu$ denotes the real cube root $u^{1/3}$, $n$ denotes the
cube root of the norm of $u$, and $v$ denotes the real cube root of
the fundamental unit $f(d)$.

\begin{thm}\label{Thm 7.3}
Let $u \in S_d^*$ have nonzero norm modulo $3$.
Assume that $3\nmid h(d)$.
Then $\nu \in M$ if and only if $u \in P_d$.
\end{thm}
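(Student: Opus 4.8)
The plan is to reduce the statement, via Lemmas 7.1 and 7.2, to the case $u=f(d)$, for which the equivalence $v\in M\Longleftrightarrow f(d)\in P_d$ is known (it is the fundamental-unit case noted just after the definition of $P_d$). First I would write $u=f(d)^k\alpha^3$ with $3\nmid k$. Since $u\in S_d^*$, every rational prime dividing $u$ divides $d$ and hence ramifies in $\OO(d)$, so no split prime divides $u$; thus in the factorization $(u)=\A^3q\QQ$ of Lemma 7.1 one has $q=1$, $\QQ=(1)$, and $(u)=\A^3$. (Equivalently, one checks straight from the definition of $S_d$ that every prime-ideal valuation of $u$ is a multiple of $3$.) Now $[\A]^3=[(u)]$ is trivial in the class group of $\OO(d)$, whose order $h(d)$ is prime to $3$, so $[\A]$ is trivial and $\A=(\alpha)$ for some $\alpha\in\OO(d)$; then $u/\alpha^3$ is a unit of $\OO(d)$, hence equals $\pm f(d)^k$, and after absorbing the sign into $\alpha$ we get $u=f(d)^k\alpha^3$. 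Here $3\nmid k$, for otherwise $u$ would be a cube in $\OO(d)$. Taking norms gives $|N(\alpha)|=|n|$, so $3\nmid N(\alpha)$, and taking real cube roots gives $\nu=v^k\alpha$ up to sign.

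Next I would show $\nu\in M\Longleftrightarrow v\in M$. All but finitely many primes $p$ that split completely in $M$ have the form $p=x^2+3dy^2$ (as used in the proof of Lemma 2.1), and any such prime is $\equiv 1\pmod 3$ and hence splits completely in $\Q(\sqrt{-3})$; comparing sets of split primes forces $\Q(\sqrt{-3})\subseteq M$. Together with $\sqrt{-3d}\in F\subseteq M$ this gives $\sqrt d\in M$, so $C=\Q(\sqrt d)\subseteq M$ and $\alpha\in M$. Hence $\nu\in M\Longleftrightarrow v^k\in M$. Since $v^3=f(d)\in C\subseteq M$, the degree $[M(v):M]$ is $1$ or $3$, and because $3\nmid k$ this forces $v^k\in M\Longleftrightarrow v\in M$.

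Finally I would show $u\in P_d\Longleftrightarrow f(d)\in P_d$ using Lemma 7.2. Membership in $P_d$ depends only on the norm and on the rational coefficient of its argument, so $P_d$ is invariant under $\beta\mapsto\overline\beta$. Write $k=3j+r$ with $r\in\{1,2\}$, so $u=f(d)^r\cdot(f(d)^j\alpha)^3$. One checks that $f(d)^r\in R_d$ and that $\mu:=f(d)^j\alpha$ has norm prime to $3$, so (7.4) gives $u\in P_d\Longleftrightarrow f(d)^r\in P_d$. If $r=1$ we are done. If $r=2$, apply (7.4) again with $\beta=f(d)^2\in R_d$ and $\mu=\overline{f(d)}$ (of norm $\pm1$): since $f(d)\,\overline{f(d)}=N(f(d))=\pm1$, we have $f(d)^2\,\overline{f(d)}^3=\overline{f(d)}$, whence $f(d)^2\in P_d\Longleftrightarrow\overline{f(d)}\in P_d\Longleftrightarrow f(d)\in P_d$. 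Combining with the previous step and with the fundamental-unit case of Theorem 6.1 yields $\nu\in M\Longleftrightarrow v\in M\Longleftrightarrow f(d)\in P_d\Longleftrightarrow u\in P_d$.

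The main obstacle should be the middle step: one must verify $\Q(\sqrt{-3})\subseteq M$ (hence $\alpha\in M$) uniformly in $d$, including the case $3\mid d$ where $F=\Q(\sqrt{-d/3})$ and $3$ need not ramify in $F$; and one must make the degree argument $v^k\in M\Leftrightarrow v\in M$ (for $3\nmid k$) precise. Steps 1 and 3 are routine once Lemmas 7.1 and 7.2 are in hand.
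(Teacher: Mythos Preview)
Your argument is correct and follows the same route as the paper: use Lemma~7.1 and $3\nmid h(d)$ to write $(u)=\A^3$ with $\A$ principal, hence $u=\mu^3 f(d)^{\pm 1}$, reduce $\nu\in M$ to $v\in M$ using $\mu\in M$, and then invoke (7.4) from Lemma~7.2 together with the fundamental-unit case. The paper is terser---it absorbs your exponent $k$ into $\mu$ to get $u=\mu^3 f(d)^{\pm1}$, assumes the plus sign ``without loss of generality,'' and asserts ``Clearly $\nu=v(x+y\sqrt d)\in M$ iff $v\in M$''---so your explicit justification that $\sqrt{-3}\in M$ (hence $\sqrt d\in M$, hence $\alpha\in M$) and your handling of the $r=2$ case via conjugation are filling in exactly the steps the paper elides; the concern you flag about the ``middle step'' is real but already implicitly resolved by the Artin-symbol argument in the proof of Lemma~2.1 (which shows $F(w)\subset M$ and works verbatim when $3\mid d$ since $p=x^2+3dy^2\equiv 1\pmod 3$).
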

\begin{proof}
By definition of $S_d^*$, any rational prime dividing $u$
must ramify in $\OO(d)$.  Thus, by Lemma 7.1, we have
$(u) = \A^3$ for some ideal $\A$
in $\OO(d)$.   
Since $3\nmid h(d)$ and both of  $\A^3$ , $\A^{h(d)}$ are principal,
it follows that $\A$ is principal.
This shows that $(u) = (\mu^3)$ for some $\mu \in \OO(d)$,
so that $u$ has the form (7.1). 
Without loss of generality, we will assume the plus sign in (7.1),
and we write  $u = f(d) (x+y \sqrt{d})^3$, where $(x+y \sqrt{d})$
is an element of $\OO(d)$ whose norm is not divisible by $3$.

Clearly $\nu = v(x+y \sqrt{d})\in M$ if and only if $v \in M$.
As noted above Lemma 7.1, $v \in M$ if and only if
$f(d) \in P_d$. Thus $\nu \in M$ if and only if $f(d) \in P_d$.
It remains to prove that
\begin{equation}\label{7.6}
f(d) \in P_d \Longleftrightarrow  f(d)(x+y\sqrt{d})^3  \in P_d .
\end{equation}
This follows from the special case $\beta=f(d)$ of Lemma 7.2.
\end{proof}

Let $M_e$ denote the ring class field for the order $\Z[e\sqrt{-3d}]$ 
in $F=\Q(\sqrt{-3d})$.  In particular, $M_1 = M$. 
For $u \in \OO(d)$, let $c=c(u)$ be the product
of the distinct rational primes which divide $u$ but not $d$.
Note that for $u \in S_d$, we have  $c(u)=1$ if and only if $u \in S_d^*$.

Theorem 7.5 below extends Theorem 7.3.
Just as for Theorem 7.3, 
we conjecture that the condition on the class number can be dropped.
The proof of Theorem 7.5 is conditional on the following conjecture.

\begin{conj}\label{Conj 7.4}
Let $\beta \in \OO(d)$ have a cubic norm with $3\nmid N(\beta)$.
Then
$\beta^{1/3} \in M_c$ if and only if
$\beta \in P_d$, where $c=c(\beta)$.
In particular, this equivalence holds for every  $\beta \in R_d$ 
with $3\nmid N(\beta)$.
\end{conj}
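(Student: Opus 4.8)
The plan is to adapt, for a general $\beta$, the argument that proves Theorem 6.1 --- which is precisely the case $\beta=f(d)$, $c=1$ --- replacing ``unramified'' by ``conductor dividing $(cT)$'', where $T$ is the conductor of $\Z[\sqrt{-3d}]$. If $\beta$ is the cube of an element $\mu\in\OO(d)$ (in particular if $\beta=r^3$, $r\in\Z$), then $\beta^{1/3}=\mu\in\Q(\sqrt d)\subset B=F(w)\subset M\subset M_c$, while $\beta\in P_d$ follows from $1\in P_d\cap R_d$ and (7.4); so assume henceforth that $\beta$ is not a cube. Set $K_\beta=C(\beta^{1/3})$ and $L_\beta=F(\beta^{1/3},w)=K_\beta(w)$. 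Because $\beta\beta'=N(\beta)=n^3$ forces $\beta'^{1/3}=n(\beta^{1/3})^{-1}\in\Q(\beta^{1/3})$, the computation used for the fundamental unit shows $L_\beta/F$ is cyclic of degree $6$; and $F(w)\subset M\subset M_c$ by the argument of Lemma 2.1 (the primes of $F$ splitting completely in $M$ have norm $p=x^2+3dy^2\equiv1\pmod3$, so they split in $F(w)$). Hence $\beta^{1/3}\in M_c$ if and only if $L_\beta\subseteq M_c$, and by \cite[Thm.~8.19]{Cox} together with \cite[Thm.~1]{Cho} this holds if and only if the abelian extension $L_\beta/F$ has conductor dividing $(cT)$ \emph{and} the Artin map of $L_\beta/F$ annihilates every principal ideal of $F$ with a rational generator modulo $cT\OO_F$.

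I would check the conductor condition prime by prime. Away from $3$: the factorisation $(\beta)=\A^3q\QQ$ of Lemma 7.1 (whose proof only uses that $N(\beta)$ is a cube, not that $\beta\in S_d$), in which $q$ is squarefree with split prime factors, shows that $L_\beta/F$ is ramified away from $3$ only at primes dividing $q$, and there tamely with ramification index $3$; moreover $q\mid c(\beta)$, so the prime-to-$3$ part of the conductor of $L_\beta/F$ divides $(c)$ with no further hypothesis. At $3$ the set $P_d$ enters through a Herz-type criterion: for $\beta=a+b\sqrt d$ with $3\nmid N(\beta)$, the cubic extension $F(t_\beta)/F$, with $t_\beta$ a root of $x^3-3nx-(\beta+\beta')$ generating the cubic subfield of $L_\beta$, is unramified at $3$ if and only if $a$ lies, as a function of $n\bmod 9$, in exactly the residue classes modulo $27$ listed in the definition of $P_d$. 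For $n=\pm1$ this is the content extracted from \cite[Thm.~6]{Herz} in the proof of Theorem 6.1; for general $n$ prime to $3$ it is the same purely local computation at the primes above $3$, and its verification is precisely the finite case analysis modulo $27$ carried out by the Mathematica argument behind Lemma 7.2. Granting it, and noting that the $3$-ramification in $L_\beta/F$ coming from the subextension $F(w)/F$ is always tame (and trivial when $3\nmid d$), one gets: $L_\beta/F$ has conductor dividing $(cT)$ if and only if $\beta\in P_d$.

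It remains to settle the Artin-map condition. When $c=1$ it is vacuous, because $T\in\{1,2,3,6\}$ and for each of these moduli $(\Z/T)^\times=\{\pm1\}$, so a principal ideal with a rational generator modulo $T\OO_F$ already has a generator $\equiv1\pmod{T\OO_F}$ after multiplication by $-1$; thus $M$ is itself a ray class field of $F$ (compare Theorem 6.2) and the conductor bound alone forces $L_\beta\subseteq M$. For $c>1$ this coincidence fails, and one must show that the Artin symbol in $L_\beta/F$ of every principal ideal $(\alpha)$ with $\alpha\equiv a\pmod{cT\OO_F}$, $a\in\Z$, is trivial. My approach would be to use the multiplicativity relations (7.4) and (7.5) of Lemma 7.2, applied to the factorisation of $\beta$ from Lemma 7.1, to reduce to two building blocks: $\beta$ a cube times the fundamental unit (settled by the $c=1$ case) and $\beta$ contributing a single split rational prime $\ell\mid c$, for which a direct cubic-reciprocity computation identifies the Artin symbol of $(\alpha)$ with the cubic residue character of $\beta$ modulo $\ell$ --- and the congruences on $a$ modulo $27$ defining $P_d$ are exactly those making this character trivial on the ideals in question. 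Matching the cubic-reciprocity data against the ring-class-versus-ray-class discrepancy, uniformly in $d$ and $c$, is the main obstacle and the reason the statement is only conjectured; by contrast I expect the $27$-modular bookkeeping in the Herz-type step to be routine. For the ``in particular'' clause, each $\beta\in R_d$ has a factorisation $(\beta)=\A^3\prod\p_\ell^{\,2}\p_\ell'$ of an especially simple shape, which makes both the conductor computation and the cubic-reciprocity computation completely explicit, so the reduction above should close cleanly there.
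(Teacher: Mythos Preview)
The statement you are addressing is labeled as a \emph{conjecture} in the paper (Conjecture~7.4); the paper offers no proof, uses it only as a hypothesis in Theorem~7.5, and cites numerical evidence from \cite{EVV}. So there is no ``paper's own proof'' to compare against, and the relevant question is whether your proposal actually closes the gap.

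It does not, and you say so yourself: the step where the Artin symbol of $L_\beta/F$ must kill every principal ideal with a generator that is rational modulo $cT\OO_F$ is left as ``the main obstacle and the reason the statement is only conjectured''. Your suggested reduction via Lemma~7.2 to a fundamental-unit piece plus single-split-prime pieces, followed by an unspecified cubic-reciprocity calculation, is a strategy, not an argument; nothing in the paper or in your sketch supplies the reciprocity input needed to make it go through uniformly in $d$ and $c$. That is a genuine gap, not a routine omission.

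Two smaller issues in the parts you present as settled. First, you claim the proof of Lemma~7.1 ``only uses that $N(\beta)$ is a cube, not that $\beta\in S_d$''; this is false. The inert case in that proof reads ``If $p$ were inert, then $p^3\mid u$, contradicting the definition of $S_d$'', so the no-cube-divisor hypothesis is used. For a general $\beta$ you must first strip rational-prime cubes before invoking the factorisation. Second, you assert that the Herz-type ramification criterion at $3$ for general $n$ prime to $3$ ``is precisely the finite case analysis modulo $27$ carried out by the Mathematica argument behind Lemma~7.2''. It is not: Lemma~7.2 checks the closure relations $\beta\in P_d\Leftrightarrow\beta\mu^3\in P_d$ and $(\beta,\gamma\in P_d)\Rightarrow\beta\gamma\in P_d$ entirely on the combinatorial $P_d$ side, and says nothing about ramification of $F(t_\beta)/F$. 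Extending Herz's criterion to arbitrary $n$ with $3\nmid n$ would be a separate local computation, not the content of Lemma~7.2.
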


\begin{rem}[\bf{7I}]
Let $\beta, \gamma \in R_d$ with $3 \nmid N(\beta\gamma)$.
If $\beta^{1/3} \in M_{c(\beta)}$
and $\gamma^{1/3} \in M_{c(\gamma)}$, 
then by definition of $M_{c(u)}$, we have
$(\beta\gamma)^{1/3} \in M_{c(\beta\gamma)}$.
This demonstrates that (7.5) is consistent with Conjecture 7.4.
\end{rem}

\begin{thm}\label{Thm 7.5}
Let $u \in S_d$ have nonzero norm modulo $3$.  
Assume that Conjecture 7.4 holds and that $3 \nmid h(d)$.   Then
$\nu \in M_c$ if and only if 
$u \in P_d$, where $c=c(u)$. 
\end{thm}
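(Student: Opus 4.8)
The plan is to reduce the statement for an arbitrary $u\in S_d$ to the case $\beta\in R_d$ of Conjecture~7.4 (its ``in particular'' clause), using Lemma~7.1 and the hypothesis $3\nmid h(d)$. First I would invoke Lemma~7.1 to write $(u)=\A^{3}(q)\QQ$ in $\OO(d)$ with $q=N(\QQ)$ squarefree and composed of primes that split in $\OO(d)$; one then checks that $q$ agrees with $c(u)=c$ after, if necessary, padding by a rational cube (the only subtlety being a ramified prime not dividing $d$ --- necessarily $2$, when $d\equiv 3\pmod 4$ --- which divides $u$ exactly to the third power and can be carried along as such). Because $(u)$ and $(q)$ are principal, $[\QQ]=[\A]^{-3}$ in the class group. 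Choosing $b\in\{h(d),2h(d)\}$ with $b\equiv 1\pmod 3$ (possible as $3\nmid h(d)$), the exponent $b$ annihilates both $[\A]$ and $[\QQ]$, so $\QQ^{\,b}=(\rho)$ with $N(\rho)=q^{b}$, and $\gamma_{0}:=q^{b}\rho\in R_d$ with $c(\gamma_{0})=q$ and $3\nmid N(\gamma_{0})=q^{3b}$. The fractional ideal $\mathfrak{D}:=\A\bigl((q)\QQ\bigr)^{(1-b)/3}$ satisfies $\mathfrak{D}^{3}=(u)(\gamma_{0})^{-1}$ and has trivial class, since $[\mathfrak{D}]=[\A]\,[\QQ]^{(1-b)/3}=[\A]^{b}=1$. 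Hence $\mathfrak{D}=(\lambda)$ for some $\lambda\in\Q(\sqrt{d})^{\times}$ with $3\nmid N(\lambda)$, and $u=\zeta\,\gamma_{0}\,\lambda^{3}$ for a unit $\zeta$. Writing $\zeta=\eta^{3}f(d)^{\epsilon_{0}}$ with $\epsilon_{0}\in\{0,1,2\}$ and absorbing the cube, I arrive at
\[ u=\gamma\,\lambda'^{\,3},\qquad\text{where}\quad \gamma:=f(d)^{\epsilon_{0}}\gamma_{0}\in R_d,\ \ c(\gamma)=c,\ \ 3\nmid N(\gamma),\ \ \lambda'\in\Q(\sqrt{d})^{\times},\ \ 3\nmid N(\lambda').\]

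With this factorization in hand I would treat the two implications in parallel. For the field side, note $C=\Q(\sqrt{d})\subseteq B=F(\sqrt{-3})$, and $B\subseteq M\subseteq M_c$: when $3\nmid d$ because $B/F$ is unramified (as in the proof of Lemma~4.2), and when $3\mid d$ because $B\subseteq F_{(3)}$ and $F_{(3)}\subseteq M$ by Theorem~6.2. Thus $\lambda'\in M_c$, and taking real cube roots in $u=\gamma\lambda'^{3}$ gives $\nu=\gamma^{1/3}\lambda'$, whence $\nu\in M_c$ if and only if $\gamma^{1/3}\in M_c$. For the congruence side, I would clear the denominator of $\lambda'$: write $\lambda'=\mu/N_{0}$ with $\mu\in\OO(d)$, $N_{0}\in\Z_{>0}$ and $3\nmid N_{0}$ (the denominator of $\lambda'$ is supported on the split primes dividing $q$, all prime to $3$). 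Then $N_{0}^{3}u=\gamma\mu^{3}$, so equation~(7.4) of Lemma~7.2 (which applies since $\gamma\in R_d$ and $3\nmid N(\mu)$) gives $\gamma\in P_d\iff N_{0}^{3}u\in P_d$. Since membership in $P_d$ depends only on $a\bmod 27$ and $n\bmod 9$, and multiplication by the rational cube $N_{0}^{3}$ moves this pair by a cube unit modulo $27$ --- and every such unit is a power of $8$ and preserves the residue tables defining $P_d$, a finite verification --- one gets $N_{0}^{3}u\in P_d\iff u\in P_d$, hence $\gamma\in P_d\iff u\in P_d$.

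To finish, I would apply the ``in particular'' clause of Conjecture~7.4 to $\gamma\in R_d$ (legitimate since $3\nmid N(\gamma)$): $\gamma^{1/3}\in M_{c(\gamma)}\iff\gamma\in P_d$, and $c(\gamma)=c$. Combining, $\nu\in M_c\iff\gamma^{1/3}\in M_c\iff\gamma\in P_d\iff u\in P_d$, which is the assertion. I expect the main obstacle to be the first paragraph: the simultaneous collapse of the three ideal classes $[\A]$, $[\QQ]$, $[\mathfrak{D}]$ by a single exponent $b\equiv 1\pmod 3$, and --- the fussier point --- the bookkeeping needed to guarantee $c(\gamma)=c(u)$ exactly (the ramified-prime adjustment mentioned above, which one handles by an extra cube-related factor so that $\gamma$ sees the same primes as $u$). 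Everything after the factorization $u=\gamma\lambda'^{3}$ is a routine combination of Lemma~7.2, Theorem~6.2, and the stated conjecture.
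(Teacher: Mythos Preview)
Your approach is essentially the paper's: factor $(u)=\A^3(q)\QQ$ via Lemma~7.1, pick an exponent in $\{h(d),2h(d)\}$ congruent to $1\pmod 3$ to make the relevant ideals principal, produce an element of $R_d$ differing from $u$ by a cube, then apply Conjecture~7.4 to that element and transfer $P_d$-membership via Lemma~7.2. The paper packages the middle step more cleanly: it raises $u$ itself to the power $T$ (your $b$), writes $u^T=\mu_0^3\beta$ with $\beta\in R_d$, and then---since $T=3k+1$---multiplies $u^{3k}u=\mu_0^3\beta$ by the conjugate $u'^{3k}$ to obtain the \emph{integral} relation $j^3u=\mu^3\beta$ with $j=N(u)^k\in\Z$. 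This sidesteps your fractional $\lambda'$, the denominator-clearing $N_0$, and the separate justification that $\Q(\sqrt d)\subset M$. On the other hand, your remark about the ramified prime $2$ when $d\equiv 3\pmod 4$ (so that $c(u)$ can differ from $q$ by a factor of $2$) is a genuine edge case that the paper's flat assertion ``$q=c$'' passes over; Theorem~7.6 is the natural patch. Finally, note that the closing equivalence---your $N_0^3u\in P_d\Leftrightarrow u\in P_d$, the paper's $j^3u\in P_d\Leftrightarrow u\in P_d$---is not literally an instance of Lemma~7.2, since $u$ need not lie in $R_d$; both arguments tacitly rely on the direct finite check you flag (in the paper's version it is eased by $j=n^{3k}$ being a rational cube, so that the residue of $n$ modulo $9$ is unchanged and $a$ is multiplied by $\pm 1$ modulo $27$).
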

\begin{proof}
Let $T \in \{h(d),2h(d)\}$ be chosen such that $3 \mid (T-1)$.
By (7.3),
\begin{equation}\label{7.7}
(u^T) = (\A^T)^3 q^T \QQ^T.
\end{equation}
Since $\QQ^T$ is a principal ideal of norm $q^T$,  we have
$(q^T \QQ^T) = (\gamma)$ with $\gamma \in R_d$.
By (7.7), $q=c$, where $c = c(u) = c(\gamma)$.
Since $\A^T$ is principal, we have
$u^T = \mu_0^3 \beta$ for some $\mu_0 \in \OO(d)$ and $\beta \in R_d$.
Therefore $u^{3k}u = \mu_0^3 \beta$  for some $k$.
Multiplying by the conjugate $u'^{3k}$, we obtain
\begin{equation}\label{7.8}
j^3 u = \mu^3 \beta
\end{equation}
for some $\mu \in \OO(d)$, where 
$j=N(u)^k$ is not divisible by 3.  Consequently
\begin{equation}\label{7.9}
j \nu  = \mu \beta^{1/3}.
\end{equation}
Since $j, \mu \in M \subset M_c$, 
\begin{equation}\label{7.10}
\nu \in M_c \Longleftrightarrow \beta^{1/3} \in M_c
\Longleftrightarrow \beta \in P_d,
\end{equation}
where the first equivalence follows from (7.9)
and the second follows from Conjecture 7.4.
By Lemma 7.2 and (7.8),
\begin{equation}\label{7.11}
\beta \in P_d \Longleftrightarrow \mu^3\beta \in P_d
\Longleftrightarrow j^3 u \in P_d \Longleftrightarrow u \in P_d.
\end{equation}
The result now follows from (7.10) and (7.11).
\end{proof}

Some numerical examples supporting Theorem 7.5 with $3 \mid h(d)$
are given in the last section of \cite{EVV}.

\begin{thm}\label{Thm 7.6}
Let $u \in \OO(d)$ have a cubic norm and write $\nu = u^{1/3}$.
Then $\nu \in M_{c}$ if and only if $\nu \in M_{c'}$,
where $c'=c'(u)$ denotes the odd part of $c=c(u)$.
\end{thm}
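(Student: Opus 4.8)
The plan is to reduce to the case where $c$ is even and there to exploit that $M_c/M_{c'}$ is then a quadratic extension --- too small to absorb the cubic subfield of $F(\nu)$ --- while the quadratic subfield of $F(\nu)$ already lies inside $M_{c'}$. If $c$ is odd then $c=c'$ and there is nothing to prove, so assume $2\mid c$; by the definition of $c=c(u)$ this forces $2\mid u$ and $2\nmid d$, and we write $c=2c'$. Since $c'\mid c$, the order $\Z[c'\sqrt{-3d}]$ contains $\Z[c\sqrt{-3d}]$, so $M_{c'}\subseteq M_c$ and the implication $\nu\in M_{c'}\Rightarrow\nu\in M_c$ is automatic; the content is the converse. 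If $u$ is a cube in $\OO(d)$ then $\nu\in\OO(d)\subseteq F\subseteq M_{c'}$, so assume not. Then $u\notin\Q$, $[\Q(\nu):\Q]=6$, $\Q(\sqrt d)\subseteq\Q(\nu)$, and $L:=F(\nu)=\Q(w,\nu)$ is Galois over $\Q$ with $[L:F]=6$. Writing $\nu'=n/\nu$ with $n^3=N(u)$ and $u=a+b\sqrt d$, the element $\nu+\nu'$ is a root of $x^3-3nx-2a$, which is irreducible over $\Q$ (else $\nu$ would have degree $<6$), so $F(\nu+\nu')$ has degree $3$ over $F$; meanwhile $B:=\Q(\sqrt{-3},\sqrt d)=F(w)$ has degree $2$ over $F$. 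As $\gcd(2,3)=1$, the compositum $F(\nu+\nu')\cdot B$ has degree $6$ over $F$, hence equals $L$. Since each $M_e\supseteq F$, we have $\nu\in M_e\iff L\subseteq M_e\iff\bigl(F(\nu+\nu')\subseteq M_e\text{ and }B\subseteq M_e\bigr)$.

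The first main step is to show $B\subseteq M_{c'}$ unconditionally. The extension $B/F$ is unramified when $3\nmid d$ (this is part of Lemma~4.2, whose proof does not use $v\notin M$) and is at worst tamely ramified above $3$ when $3\mid d$; hence its conductor divides $(3)\OO_F$, and since $3$ divides the conductor $T$ of $\Z[\sqrt{-3d}]$ whenever $3\mid d$, this conductor divides $(c'T)$ in every case. On the other hand $B/\Q$ is biquadratic, so $\Gal(B/\Q)$ has exponent $2$; consequently, for every rational integer $a$ prime to $3$, the Artin symbol of $(a)\OO_F$ in $B/F$ is a square in $\Gal(B/\Q)$, hence trivial. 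Therefore the Artin map of $B/F$ kills the subgroup $P_{F,\Z}(c'T)\subseteq I_F(c'T)$ attached to the ring class field $M_{c'}$ \cite{Cox}, so $B\subseteq M_{c'}$.

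Next I would compute $[M_c:M_{c'}]$. The orders $\Z[c'\sqrt{-3d}]$ and $\Z[c\sqrt{-3d}]$ have conductors $c'T$ and $2c'T$ in $\OO_F$, so the ring class number formula \cite[Thm.~7.24]{Cox} gives $[M_c:M_{c'}]=2\,[\OO_{c'T}^\times:\OO_{2c'T}^\times]^{-1}\prod_{p\mid 2c'T,\ p\nmid c'T}\bigl(1-\bigl(\tfrac{D(F)}{p}\bigr)\tfrac1p\bigr)$; here the unit index equals $1$, and the product is either empty (when $2\mid c'T$) or equals $1$ --- indeed if $2\nmid c'T$ then $T$ is odd, which (as $2\nmid d$) forces $2\mid D(F)$ and hence $\bigl(\tfrac{D(F)}{2}\bigr)=0$. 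Thus $[M_c:M_{c'}]=2$. Now assume $\nu\in M_c$, so $F(\nu+\nu')\subseteq M_c$, and hence $M_{c'}\subseteq F(\nu+\nu')M_{c'}\subseteq M_c$. Since $M_{c'}/F$ is Galois, $[F(\nu+\nu')M_{c'}:M_{c'}]=[F(\nu+\nu'):F(\nu+\nu')\cap M_{c'}]$ divides $[F(\nu+\nu'):F]=3$, and it also divides $[M_c:M_{c'}]=2$; therefore it is $1$ and $F(\nu+\nu')\subseteq M_{c'}$. Combined with $B\subseteq M_{c'}$, this gives $L=F(\nu+\nu')\cdot B\subseteq M_{c'}$, i.e. $\nu\in M_{c'}$, as required.

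The step I expect to be the main obstacle is $B\subseteq M_{c'}$: one must recognize that the quadratic subfield $B$ of $F(\nu)$ already sits inside the ``odd'' ring class field, which hinges on the biquadratic structure of $B/\Q$ together with the conductor bound $\mathfrak f(B/F)\mid(3)$ and the fact that $3\mid T$ whenever $3\mid d$. By contrast the index computation $[M_c:M_{c'}]=2$ is routine, though it does require keeping track of the conductors of the orders in play and a short case analysis according to $d\bmod 4$ and to whether $3\mid d$.
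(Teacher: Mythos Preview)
Your argument is correct (modulo one slip), but it takes a substantially longer route than the paper. The slip: in the trivial case you write $\nu\in\OO(d)\subseteq F$, but $\OO(d)\subseteq\Q(\sqrt d)$ is \emph{not} contained in $F=\Q(\sqrt{-3d})$; the correct chain is $\OO(d)\subseteq B\subseteq M\subseteq M_{c'}$.

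The paper's proof avoids the decomposition $L=F(\nu+\nu')\cdot B$ entirely. It argues by contradiction: if $\nu\in M_c\setminus M_{c'}$ and $[M_c:M_{c'}]=2$, then the minimal polynomial of $\nu$ over $M_{c'}$ is quadratic; since it divides $x^3-u$, the cubic has a linear factor over $M_{c'}$, so some cube root of $u$ lies in $M_{c'}$; and since $w\in M\subseteq M_{c'}$, all cube roots do, contradicting $\nu\notin M_{c'}$. That's the whole proof. Your ``main obstacle'' $B\subseteq M_{c'}$ is exactly the statement $w\in M$, and the paper treats this as already known (it is established in the proof of Lemma~2.1: primes $p=x^2+3dy^2$ satisfy $p\equiv1\pmod3$, hence split in $\Q(w)$, so $F(w)\subseteq M$ by \cite[Thm.~8.19]{Cox}). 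Your detour through conductor bounds and the biquadratic Artin‐map argument recovers this correctly but with far more machinery than needed. What your approach buys is a structural picture --- separating the cubic and quadratic constituents of $L$ and locating each one --- and a more careful justification of $[M_c:M_{c'}]=2$ than the paper's bare citation; what the paper's approach buys is brevity and the observation that one never needs to mention $\nu+\nu'$ or $B$ at all.
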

\begin{proof}
It suffices to prove that 
$\nu \in M_{c}$ implies $\nu \in M_{c'}$ for even $c$.
Assume for the purpose of contradiction that
$\nu \in M_{c}$ but $\nu \notin M_{c'}$.
By \cite[Thm. 7.24]{Cox}, 
$M_{c}$ has degree $2$ over $M_{c'}$.
Thus the minimal polynomial of $\nu$ over $M_{c'}$
is quadratic.   Since this quadratic polynomial must divide
the cubic polynomial $x^3 - u=x^3 - \nu^3$ over $M_{c'}$, it follows that
this cubic polynomial has a linear factor over $M_{c'}$.
Since the cube roots of unity lie in $M$, we obtain the desired
contradiction $\nu \in M_{c'}$.
\end{proof}

We close this section with two conjectures.  
When $u$ is a fundamental unit, Conjecture 7.7 reduces to Theorem 1.5,
while Conjecture 7.8 reduces to Theorem 6.2.
Extensive numerical evidence for Conjectures 7.4, 7.7, and 7.8 
is given in \cite{EVV}.

\begin{conj}\label{Conj 7.7}
Let $u \in S_d$ and let $c'=c'(u)$ denote the odd part of $c=c(u)$.
When $\nu \in M_c$, the
norm of $D(F(\nu)/F)c^{-4}$
equals $1$ or $3^6$ according as $3\nmid d$ or $3 \mid d$, and
when $\nu \notin M_c$, the norm of $D(F(\nu)/F)c^{-4}$
equals $3^8$ or $3^{18}$
according as $3\nmid d$ or $3 \mid d$,
except that when $d \equiv 3 \pmod{4}$,
each $c^{-4}$ is to be replaced by $c'^{-4}$. 
\end{conj}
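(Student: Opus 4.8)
We sketch a strategy for Conjecture~7.7, extending Sections 2--5. Keep $\nu=u^{1/3}$ and $n=N(u)^{1/3}$, and write $k_\nu=\Q(\nu+\nu')$, $K_\nu=\Q(\nu)=k_\nu(\sqrt d)=\Q(\nu,\sqrt d)$, and $L_\nu=F(\nu)=\Q(w,\nu)=K_\nu(w)$; exactly as in the fundamental-unit case, $L_\nu/F$ is cyclic of degree $6$ (since $\nu'=n/\nu$ and $\nu^3-\nu'^3=u-u'=2b\sqrt d$ with $b\ne 0$, one has $\sqrt d\in F(\nu)$, hence $w\in F(\nu)$). Two facts will be used repeatedly. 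First, $\nu+\nu'$ satisfies $x^3-3n\,x-(u+u')=0$, a cubic of discriminant $-27(u-u')^2=-27\,d\,s^2$ for a suitable $s\in\Z$; for $3\nmid d$ this has odd $3$-adic valuation and is divisible by every prime of $d$, so (as in Lemma~2.2) $3$ and the prime divisors of $d$ all ramify in $k_\nu$. Second, $x^3-u$ has discriminant $-27u^2$, so $K_\nu/C$ can ramify only at $3$ and at the rational primes dividing $u$. Since $D(F)$ divides $12d$, the rational primes ramifying in $L_\nu/F$ lie above $3$, above the divisors of $d$, or above the divisors of $c=c(u)$; accordingly write $N(D(L_\nu/F))=\mathcal N_3\,\mathcal N_d\,\mathcal N_c$. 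By Lemma~7.1 every prime factor of $c$ splits in $C$, save that when $d\equiv 3\pmod 4$ the prime $2$ may divide $c$ while ramifying in $C$.

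\emph{The factor $\mathcal N_c$.} Let $p$ be an odd prime dividing $c$. By Lemma~7.1, $(p)=\p\p'$ in $\OO(d)$ with (after the normalization there) $v_{\p}(u)\equiv 1$ and $v_{\p'}(u)\equiv 0\pmod 3$; hence in $K_\nu=C(\nu)$ the prime $\p$ is totally ramified of degree $3$ and $\p'$ is unramified, and since $p\ne 3$ the ramification is tame, so the $\p$-part of $D(K_\nu/C)$ is exactly $\p^2$. Because $L_\nu/K_\nu$ is unramified at $p$ and $F/\Q$ is unramified at $p$, passing through the tower $\Q\subset C\subset K_\nu\subset L_\nu$ (using $D(L_\nu)=D(K_\nu)^2\,N(D(L_\nu/K_\nu))$ and $D(K_\nu)=D(C)^3\,N(D(K_\nu/C))$) and comparing with $D(L_\nu)=D(F)^6\,N(D(L_\nu/F))$ shows that the exact power of $p$ in $N(D(L_\nu/F))$ is $p^4$. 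Multiplying over the odd prime divisors of $c$ gives $c'^4$. For $p=2$ there are two sub-cases: if $d\equiv 1\pmod 8$ then $2$ splits in $C$ and the same computation contributes a further $2^4$, so $\mathcal N_c=c^4$ (consistent with the conjecture, since then $d\equiv 1\pmod 4$); if $d\equiv 3\pmod 4$ then $2$ ramifies in $C$, and since no rational cube divides $u$ one is forced to have $v_{\p_2}(u)=3$, whence $\p_2$ is in fact \emph{unramified} in $K_\nu/C$ and contributes nothing --- this is exactly why $c^{-4}$ is replaced by $c'^4$ when $d\equiv 3\pmod 4$. (When $d\equiv 2\pmod 4$ we have $2\mid d$, so $2\nmid c$; when $d\equiv 5\pmod 8$ the prime $2$ is inert in $C$ and so does not divide $u$.) Thus $\mathcal N_c=c^4$, except $\mathcal N_c=c'^4$ when $d\equiv 3\pmod 4$.

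\emph{The factor $\mathcal N_3\mathcal N_d$.} It remains to show this equals the value of Theorem~1.5: $1$ or $3^8$ when $3\nmid d$, and $3^6$ or $3^{18}$ when $3\mid d$, according as $\nu\in M_c$ or $\nu\notin M_c$. The plan is to replay Lemmas~2.1--2.5, 3.1--3.2 (for $\nu\in M_c$) and Lemmas~4.1--4.5, 5.1--5.3 (for $\nu\notin M_c$) with $\nu,M_c,L_\nu,k_\nu$ in place of $v,M,L,k$. The two facts above let these arguments go through at the primes above $3$ and above the divisors of $d$ with only notational changes: there $K_\nu/C$ is unramified outside $3$ (the primes of $c$ having been accounted for separately), and the minimal cubic of $\nu+\nu'$ has the same relevant $3$- and $d$-divisibility as in Section~2. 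What emerges in tandem with the discriminant formula is the dichotomy
\[
\nu \in M_c \iff F(\nu+\nu')/F \text{ is unramified,}
\]
the analogue of Theorem~6.1: the analogues of Lemmas~2.1 and~3.1 give $\nu\in M_c\Rightarrow F(\nu+\nu')/F$ unramified, and those of Lemmas~4.3 and~5.2 give the reverse implication via the congruence criterion underlying \cite{Herz}. In carrying this out it is convenient to first reduce, via the identity $j\nu=\mu\,\beta^{1/3}$ from the proof of Theorem~7.5 (which gives $F(\nu)=F(\beta^{1/3})$), to the case $u=\beta\in R_d$, and then, when $3\nmid h(d)$, to $u\in S_d^*$ modulo cubes, as in Theorem~7.3.

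\emph{Main obstacle.} The hard part is establishing the dichotomy $\nu\in M_c\iff F(\nu+\nu')/F$ unramified in the required generality: this upgrades \cite{Herz}/Theorem~6.1 from fundamental units to arbitrary $u\in S_d$, and must track the nontrivial conductor (a divisor of $2c$) of the order $\Z[c\sqrt{-3d}]$ defining $M_c$; it is essentially the content of Conjecture~7.4, so a complete proof of Conjecture~7.7 will presumably take (a form of) Conjecture~7.4 as input, exactly as Theorem~7.5 does, and likewise be cleanest when $3\nmid h(d)$. A secondary point is that the statement does not impose $3\nmid N(u)$ (in contrast with Theorems~7.3 and~7.5): when $3\mid N(u)$ the prime $3$ already ramifies wildly in $K_\nu/C$, and one must check --- using the bounds on the higher ramification filtration from \cite{N} together with the fact that a cyclic cubic extension has square discriminant --- that this extra wild ramification fuses with the ramification coming from $F$ so as to leave $\mathcal N_3$ at its fundamental-unit value; after reducing to $u\in S_d^*$ (so that each prime above $3$ divides $u$ to a multiple of $3$) the bookkeeping becomes manageable, and one may well want to add the hypothesis $3\nmid N(u)$ to the conjecture.
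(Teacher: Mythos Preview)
The statement you address is labelled a \emph{conjecture} in the paper and is not proved there; the authors offer only numerical evidence (via \cite{EVV}) in its support. There is therefore no proof in the paper against which to compare your attempt.

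Your proposal is in any case a strategy sketch rather than a proof, and you correctly locate the essential gap yourself: the dichotomy $\nu\in M_c \Longleftrightarrow F(\nu+\nu')/F$ is unramified is tantamount to (a form of) Conjecture~7.4, which the paper also leaves open. Without that input the argument cannot close, exactly as you say. On the portions you do carry out: the computation of $\mathcal N_c$ at odd primes $p\mid c$ is correct, and your treatment of $p=2$ when $d\equiv 3\pmod 4$ is right in spirit but needs the extra line that $2\mid u$, $8\nmid u$, and $3\mid v_{\p_2}(u)$ (from Lemma~7.1) together force $2\| u$ and hence $v_{\p_2}(u)\in\{2,3\}$, so $v_{\p_2}(u)=3$; unramifiedness then follows since every $2$-adic unit is a cube. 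Two further cautions: your reduction to $S_d^*$ via the mechanism of Theorem~7.3 imports the hypothesis $3\nmid h(d)$, which Conjecture~7.7 does not assume; and your closing suggestion that one ``may well want to add the hypothesis $3\nmid N(u)$'' runs counter to the paper's stance, since the conjecture is stated for all of $S_d$ and is supported by the cited numerical data --- so the wild $3$-ramification when $3\mid N(u)$ should be handled, not hypothesized away.
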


\begin{conj}\label{Conj 7.8}
For every $u \in S_d$,  we have $\nu \in M_{3c}$, where $c=c(u)$.
\end{conj}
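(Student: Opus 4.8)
The plan is to show that $F(\nu)=\Q(w,\nu)$ lies in $M_{3c}$ by checking the two structural properties that force an abelian extension of an imaginary quadratic field into a ring class field, and then matching conductors. First, $F(\nu)/F$ is abelian: since $w\in F(w)=\Q(\sqrt d,\sqrt{-3})$ and $u$ is not a cube in $F(w)$ (each cube root $w^i\nu$ generates a degree-$6$ field, which cannot lie inside the degree-$4$ field $F(w)$), the extension $F(w,\nu)/F(w)$ is cyclic cubic, and $F(w)/F$ is quadratic. The nontrivial element of $\Gal(F(w)/F)$ sends $\sqrt d\mapsto-\sqrt d$ and $w\mapsto w^{-1}$, hence $u\mapsto u'=N(u)/u\equiv u^{-1}$ modulo cubes; by Galois equivariance of the Kummer pairing these two inversions cancel, so conjugation acts trivially on $\Gal(F(w,\nu)/F(w))\cong\Z/3$. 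Therefore $\Gal(F(\nu)/F)\cong\Z/6$, which extends the statement made for fundamental units in the introduction.

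Second, $F(\nu)=\Q(w,\nu)$ is the Galois closure of $\Q(\nu)/\Q$, hence Galois of degree $12$ over $\Q$. The subfield $\Q(\nu)$ is real, so $\Q(\nu)/\Q$ is not Galois ($\nu$ has the non-real conjugate $w\nu$); hence $\Gal(F(\nu)/\Q)$ is non-abelian, and since $\Gal(F(\nu)/F)\cong\Z/6$ has automorphism group $\{\pm1\}$, complex conjugation must act on $\Gal(F(\nu)/F)$ by inversion. This is the standard criterion for containment in a ring class field: a consequence of complex conjugation inverting $\Gal(N/F)$ is that $(\ell)$ belongs to the norm group of $N/F$ for every rational prime $\ell$, so $N$ is contained in the ring class field of any conductor divisible by $\mathfrak f(N/F)$. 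It therefore suffices to show that $\mathfrak f(F(\nu)/F)$ divides the conductor of the order $\Z[3c\sqrt{-3d}]$, which is $3ct_0$ if $3\nmid d$ and $9ct_0$ if $3\mid d$, with $t_0\in\{1,2\}$ according to $d\bmod 4$.

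For the conductor bound: away from $3$, Lemma 7.1 shows that a prime $\p\nmid 3$ of $F$ ramifies in $F(w,\nu)/F(w)$ only if it divides $(u)$ to a power not divisible by $3$, hence only over a rational prime dividing $c$; such ramification is tame, so the prime-to-$3$ part of $\mathfrak f(F(\nu)/F)$ divides $(c)$. At $3$, the quadratic subfield $B=F(w)$ is unramified over $F$ when $3\nmid d$ and tamely ramified when $3\mid d$, while the cubic subfield $F(\nu+\nu')$ contributes, at a prime over $3$, exponent $0$ or a small exponent forced by the wild ramification of a cyclic cubic extension of a $3$-adic field; feeding the relative-discriminant evaluations of Theorems 1.1--1.5 (and, for general $u$, of Conjecture 7.7) through the conductor--discriminant formula yields that the $3$-part of $\mathfrak f(F(\nu)/F)$ divides the $3$-part of the conductor of $\Z[3c\sqrt{-3d}]$ in every case --- note that $3$ ramifies in $F$ when $3\nmid d$, while the order already carries the factor $9$ when $3\mid d$. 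When $3\nmid h(d)$ one can alternatively reduce to the fundamental-unit case exactly as in the proof of Theorem 7.5: raising $u$ to a power $T$ with $h(d)\mid T$ and $3\mid(T-1)$ one obtains $j^3u=\mu^3 f(d)^{\pm1}\gamma$ with $j$ prime to $3$, $\mu\in\OO(d)$ and $\gamma\in R_d$ of conductor $c$, and then, using $\Q(\sqrt d)\subset M\subset M_{3c}$ (valid since $F(w)\subset M$), the problem reduces to $v\in M_3$ --- the conjecture stated below Theorem 6.1, known for $d\equiv2,3\pmod4$ by Sun --- together with $\gamma^{1/3}\in M_{3c}$ for $\gamma\in R_d$.

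The main obstacle is the conductor estimate at $3$. In the tame-Kummer range $3\nmid u$ one must determine exactly when $F(\nu+\nu')/F$ ramifies above $3$ and bound its conductor exponent there, which is precisely the arithmetic content of Conjecture 7.7; so a complete proof will very likely have to settle Conjecture 7.7 simultaneously. When $3\mid u$ there is additional wild ramification in the Kummer extension $F(w,\nu)/F(w)$ itself above $3$, which has to be shown not to exceed the room supplied by the factor $3c$. Finally, the power-raising reduction is unavailable when $3\mid h(d)$, so in that range one is thrown back on the direct argument of the first two steps --- which, incidentally, also disposes uniformly of the remaining case $d\equiv1\pmod4$ of the base case $v\in M_3$. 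These are the places where the real work lies.
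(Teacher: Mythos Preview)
The paper does not prove this statement: it is stated as Conjecture~7.8, accompanied only by a reference to numerical evidence in \cite{EVV}, and the text below Theorem~6.1 already notes that even the special case $u=f(d)$ remains open when $d\equiv1\pmod{4}$. So there is no ``paper's own proof'' to compare against.

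Your outline is a reasonable plan of attack and the first two steps are essentially correct. The argument that $L=F(\nu)$ is cyclic of degree~$6$ over $F$ goes through just as for fundamental units, and the generalized-dihedral criterion (complex conjugation acting by inversion on $\Gal(L/F)$) is indeed the right tool: it guarantees that $L$ sits inside \emph{some} ring class field of $F$. What remains is exactly the conductor estimate, and here your proposal is candid about not being a proof. You invoke Conjecture~7.7 to control the $3$-part of $\mathfrak f(L/F)$, but Conjecture~7.7 is itself unproven in the paper; you also flag separately the wild case $3\mid u$ and the failure of the power-raising reduction when $3\mid h(d)$. These are precisely the obstructions that keep the statement a conjecture. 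In short, your write-up is a coherent strategy memo rather than a proof: it isolates the genuine difficulty (the $3$-adic conductor bound) but does not overcome it, and its appeal to Conjecture~7.7 replaces one open problem with another of the same strength.

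One small correction: in your conductor discussion you say the prime-to-$3$ part of $\mathfrak f(L/F)$ divides $(c)$ by Lemma~7.1. Lemma~7.1 controls the valuation of $u$ at split primes, but to conclude tameness and the divisor bound you still need to argue that at a prime $\p\mid c$ the Kummer extension $B(\nu)/B$ has $v_\p(u)\in\{1,2\}$, hence conductor exponent~$1$; this is implicit in the lemma but should be said. Also, the alternative reduction you sketch when $3\nmid h(d)$ does not quite close the loop: it reduces to $v\in M_3$ \emph{and} to $\gamma^{1/3}\in M_{3c}$ for $\gamma\in R_d$, and the latter is essentially the conjecture again for a restricted class of $u$, not something already established in the paper.
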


\section{Generalization of Theorem 6.1}
Let $d>1$ be squarefree.
In this section, we compute the value of
$(\frac{m + n\sqrt{d}}{2})^{\frac{p-1}{3}} \pmod{p}$ 
for the primes $p=x^2 +3dy^2$,
where $m^2 - dn^2 =\pm4$. Of course the value is $1 \pmod{p}$
if and only if  $\frac{m + n\sqrt{d}}{2}$ is a cubic residue for these
$p$.  Theorem 6.2 shows that the value is $1 \pmod{p}$ whenever
$3\mid y$, so throughout this section, it will be assumed that $3 \nmid y$.
The signs of $x$ and $y$ will be chosen such that $3\mid (x-y)$.
Theorems 8.1 and 8.2 address the cases when $m^2-dn^2=-4$ 
and $m^2-dn^2=4$, respectively.  In the latter case, we may assume
that $p \nmid n$, since otherwise $m \e \pm2 \pmod p$ so that
$\frac{m + n\sqrt{d}}{2}$ is a cubic residue for $p$.

Set $\omega=\f{-1+\sqrt{-3}}2$.  We will 
utilize properties (8.1)--(8.4) for the cubic Jacobi symbol
\cite[p. 63]{S}.   
For integers $a, b, c, d$ with $3 \nmid c \ $, $(d,c)=1\ $, and
$a-2 \e b \e 0 \pmod 3$,
\begin{equation}\label{8.1}
\Big(\f{\omega}{a+b\omega}\Big)_3=\omega^{\f{a+b+1}3},
\quad \Big(\f{1-\omega}{a+b\omega}\Big)_3=\omega^{\f{2(a+1)}3},
	\quad \Big(\f {d}{c} \Big)_3=1.
\end{equation}
By the cubic reciprocity law,
\begin{equation}\label{8.2}
\Big(\f {a+b\omega}{c+d\omega}\Big)_3=\Big(\f{c+d\omega} {a+b\omega}\Big)_3,
        \quad \t{when} \quad b\e d \e 0 \pmod{3}, \ \ 3\nmid ac.
\end{equation}
When $a-2\e b\e 0\pmod 3$, it follows from (8.1) that
\begin{equation}\label{8.3}
\Big(\f{1+2\omega}{a+b\omega}\Big)_3=\Big(\f{\omega(1-\omega)}{a+b\omega}\Big)_3=\omega^{\f b3}, \quad  \Big(\f{3}{a+b\omega}\Big)_3=\Big(\f{-\omega^2(1-\omega)^2}{a+b\omega}\Big)_3=\omega^{\f{2b}3}.
\end{equation}
If  $3\nmid a$  and $(a^2,c^2+3d^2)=1$, we have
\begin{equation*}
\Big(\f{c+d(1+2\omega)} {a}\Big)_3\Big(\f{-c+d(1+2\omega)} {a}\Big)_3
=\Big(\f{-c^2-3d^2} {a}\Big)_3=1,
\end{equation*}
and so
\begin{equation}\label{8.4}
\Big(\f{-c+d(1+2\omega)} {a}\Big)_3=
\Big(\f{c+d(1+2\omega)} {a}\Big)_3^{-1}.
\end{equation}

We will also need
\begin{equation}\label{8.5}
\Big(\f {m+n}2+m\omega\Big)\Big(\f {m+n}2+m\omega^2\Big)=\f {3m^2+n^2}4.
\end{equation}

\begin{thm}\label{Thm 8.1}
Suppose that $m^2-dn^2=-4$.  Then modulo $p=x^2 + 3dy^2$, we have
	\begin{equation}\label{8.6}
\Big(\f {m+n\sqrt d}2\Big)^{\f {p-1}3}\e 
\begin{cases} 
1
&\hbox{if $m\e 0,  \pm 4 \pmod 9$,} \\
\f  12\big(-1+ (\f {mn/3}3)\f
{x}{dy}\sqrt d\big)
&\hbox{if $m\e \pm 3\pmod 9$,}\\
\f  12\big(-1- (\f{mn}3)\f {x}{dy}\sqrt d\big)
&\hbox{if $m\e \pm 1\pmod 9$,}\\
\f  12\big(-1+ (\f{mn}3)\f {x}{dy}\sqrt d\big)
&\hbox{if $m\e \pm 2\pmod 9$,}
\end{cases}
\end{equation}
	where $\big(\f{\cdot}3\big)$ is the Legendre symbol.
\end{thm}

\begin{proof}
When $9\mid m$, (8.6) follows from Case 1 for Theorem 6.2,
so assume from now on that $9\nmid m$.
Since $3 \nmid n$, we may choose the signs of $m, n$
so that $m \e n \e 1 \pmod{3}$ when $3 \nmid m$,
and $n \e m/3 \e 1 \pmod{3}$  when $3 \mid m$.
It suffices to prove (8.6) for this choice of signs,
since if the sign of $m$ or $n$ is reversed, one
can take conjugates in (8.6).
Observe that the Legendre symbol $\jac{mn}{3}$ equals $1$
when $3 \nmid m$
and $\jac{mn/3}{3}$ equals $1$ when $3 \mid m$.

\noindent {\bf Case 1:} $d \e 2 \pmod{4}$

\noindent
In this case, $m_2:=m/2$ and $n_2:=n/2$ are relatively prime integers.
Set $x_1=y$ and $y_1=\f {y-x}3$. Then
\begin{equation}\label{8.7}
p=(1+3d)x_1^2-6x_1y_1+9y_1^2 
\end{equation}
and
\begin{equation}\label{8.8}
\f {2(1+3d)x_1-6y_1}{6dy_1} \e \f {x}{dy}\pmod p.
\end{equation}

First suppose that $3 \mid m$.
Using (8.1)-(8.3) and (8.5), we deduce that
\begin{align*}
\Big(\f {-6n-6m(1+2\omega)}{1+3d}\Big)_3
&=\Big(\f {m_2+n_2+m\omega}{1+3d}\Big)_3=
\Big(\f {m_2+n_2+m\omega}{(1+3d)n_2^2}\Big)_3
\Big(\f {m_2+n_2+m\omega}{n_2}\Big)_3\\
&=\Big(\f {(1+3d)n_2^2}{m_2+n_2 + m\omega}\Big)_3
\Big(\f {1+2\omega}{n_2}\Big)_3=\Big(\f {3+3m_2^2+n_2^2}
{m_2+n_2+m\omega}\Big)_3\\
&=\Big(\f {3}{m_2+n_2+m\omega}\Big)_3
=\omega^{\f  {2m}3}.
\end{align*}
Appealing to \cite[Theorem 5.1]{S} with the quadratic form (8.7), 
we obtain, using (8.8),
\begin{equation*}
\Big(\f {m+n\sqrt d}2\Big)^{\f {p-1}3}
   \e\f  12\Big(-1 +  \f {x}{dy}\sqrt d\Big)\pmod p,
\end{equation*}
 as desired.

Now assume that $3\nmid m$.
Observe that $d\e dn^2=m^2+4\e 2\pmod 3$. 
Using (8.1)-(8.3) and (8.5), we deduce that
\begin{align*}
&\Big(\f {-6n-6m(1+2\omega)}{1+3d}\Big)_3\\
&=\Big(\f {m_2+n_2+m\omega}{1+3d}\Big)_3
=\Big(\f{-\omega^2}{1+3d}\Big)_3\Big(\f{m+(m_2-n_2)\omega}{1+3d}\Big)_3\\
&=\Big(\f{\omega}{-1-3d}\Big)_3^2\Big(\f{m+(m_2-n_2)\omega}{1+3d}\Big)_3
=\omega^{-2d}
\Big(\f{m+(m_2-n_2)\omega}{n_2}\Big)_3
\Big(\f{m+(m_2-n_2)\omega}{(1+3d)n_2^2}\Big)_3\\
&=\omega^d\Big(\f{2+\omega}{n_2}\Big)_3
\Big(\f {(1+3d)n_2^2}{m+(m_2-n_2)\omega}\Big)_3
=\omega^2\Big(\f{-\omega^2(1-\omega)}{n_2}\Big)_3
\Big(\f{3+ 3m_2^2+n_2^2}{m+(m_2-n_2)\omega}\Big)_3\\
&=\omega^2\Big(\f{\omega}{n_2}\Big)_3\Big(\f{3}{m+(m_2-n_2)\omega}\Big)_3
=\omega^2\cdot \omega^{\f{n_2+1}3}
\Big(\f{3}{-m-(m_2-n_2)\omega}\Big)_3\\
&=\omega^2\cdot \omega^{\f{n_2+1}3}\cdot\omega^{-\f{m-n}3}
=\omega^{\f {4-m}3}.
\end{align*}
Appealing again to \cite[Theorem 5.1]{S} with the quadratic form (8.7),
we obtain (8.6).

\noindent {\bf Case 2:} $d \e 1 \pmod{4}$

\noindent
In this case, $m$ and $n$ have the same parity.
Set $x_1=\f {x-y}3$ and $y_1=\f {2x+4y}3$. Then
\begin{equation}\label{8.9}
p=(3d+4)x_1^2-(3d-2)x_1y_1+\f {3d+1}4y_1^2. 
\end{equation}

First suppose that $3 \mid m$.
Write $n_2 = n/(n,2)$ and $m_2 = m/(n,2)$.
Using (8.1)-(8.3) and also (8.5) with $n$ replaced by $-2n$, we deduce that
\begin{align*}&\Big(\f{-(3d-2)n-3m(1+2\omega)}{3d+4}\Big)_3
\\&=\Big(\f{6n-3m(1+2\omega)}{3d+4}\Big)_3
=\Big(\f{m-2n+2m\omega}{3d+4}\Big)_3
\\&=\Big(\f{m_2-2n_2+2m_2\omega}{3d+4}\Big)_3
=\Big(\f{m_2-2n_2+2m_2\omega}{n_2}\Big)_3
\Big(\f{m_2-2n_2+2m_2\omega}{(3d+4)n_2^2}\Big)_3
\\&=\Big(\f{1+2\omega}{n_2}\Big)_3\Big(\f{(3d+4)n_2^2}
{m_2-2n_2+2m_2\omega}\Big)_3
	=\Big(\f{3m_2^2+4n_2^2+12/(n,2)^2}
{m_2-2n_2+2m_2\omega}\Big)_3
	\\&=\Big(\f{12/(n,2)^2}
{m_2-2n_2+2m_2\omega}\Big)_3
=\Big(\f{3}
	{m_2-2n_2+2m_2\omega}\Big)_3\Big(\f{4/(n,2)^2}
{m_2-2n_2+2m_2\omega}\Big)_3
\\&=\Big(\f{3}
{m_2-2n_2+2m_2\omega}\Big)_3
\Big(\f
	{m_2-2n_2+2m_2\omega}{2/(2,n)}\Big)_3^2
=\Big(\f{3}
{m_2-2n_2+2m_2\omega}\Big)_3
=\omega^{\f {2m}3} = \omega^2.
\end{align*}
Appealing to \cite[Theorem 5.1]{S} with the quadratic form (8.9),
we obtain (8.6) in the case $3\mid m$.

Next suppose that $3 \nmid m$.
Note that $d=(m^2+4)/n^2 \e 2 \pmod{3}$.
From (8.1)--(8.3),
\begin{align*}
&\Big(\f{-(3d-2)n-3m(1+2\omega)}{3d+4}\Big)_3
\\&=\Big(\f{6n-3m(1+2\omega)}{3d+4}\Big)_3
=\Big(\f{m-2n+2m\omega}{3d+4}\Big)_3
\\&=\Big(\f{-\omega^2}{3d+4}\Big)_3\Big(\f{-\omega(m-2n+2m\omega)}{3d+4}\Big)_3
=\Big(\f{\omega^2}{3d+4}\Big)_3\Big(\f{2m+(m+2n)\omega}{3d+4}\Big)_3
\\&=\Big(\f{\omega}{-4-3d}\Big)_3^2
\Big(\f{2m_2+(m_2+2n_2)\omega}{3d+4}\Big)_3
\\&=\omega^{\f{1-4-3d}3}
\Big(\f{2m_2+(m_2+2n_2)\omega}{n_2}\Big)_3
\Big(\f{2m_2+(m_2+2n_2)\omega}{(3d+4)n_2^2}\Big)_3\\&
=\omega^{-d-1}\Big(\f{2+\omega}{n_2}\Big)_3 
\Big(\f{(3d+4)n_2^2}{2m_2+(m_2+2n_2)\omega}\Big)_3\\&
=\Big(\f{-\omega^2(1-\omega)}{n_2}\Big)_3
\Big(\f{3m_2^2+4n_2^2+12/(2,n)^2}{2m_2+(m_2+2n_2)\omega}\Big)_3
=\Big(\f{\omega}{n_2}\Big)_3
\Big(\f{12/(n,2)^2}{2m_2+(m_2+2n_2)\omega}\Big)_3
\\&=\Big(\f{\omega}{n_2}\Big)_3
\Big(\f{3}{2m_2+(m_2+2n_2)\omega}\Big)_3
\Big(\f{2/(n,2)}{2m_2+(m_2+2n_2)\omega}\Big)_3^2
\\&=\Big(\f{\omega}{n_2}\Big)_3
\Big(\f{3}{2m_2+(m_2+2n_2)\omega}\Big)_3
\Big(\f{2m_2+(m_2+2n_2)\omega}{2/(n,2)}\Big)_3^2
\\&=\Big(\f{\omega}{n_2}\Big)_3
\Big(\f{3}{2m_2+(m_2+2n_2)\omega}\Big)_3
	\Big(\f{\omega}{2/(n,2)}\Big)_3^2=\omega^{\f{4-m}3}.
\end{align*}
Appealing once again to \cite[Theorem 5.1]{S} with the quadratic form (8.9),
we obtain (8.6) in the case $3 \nmid m$, which completes the proof.
\end{proof}

From now on let $m^2-dn^2=4$, so that $(m-2)(m+2) = dn^2$. 
Always choose a sign of $m$ so that $\ord_3(m-2) \ge \ord_3 n$.
Set $m_1=\f{m-2}{(m-2,n)}$ and $n_1=\f n{(m-2,n)}$.
Then $(m_1,n_1)=1$ and $3\nmid n_1$. 
Fix the sign of $n$ so that $n_1 \e 1 \pmod{3}$.  
For brevity, define
$\a=\ord_3 (m-2)$, $\b = \ord_3 n$, and $\c = \ord_3 d$.
From the formula for $dn^2$, we have
$\a = \c + 2\b$ when $\b > 0$. 
If $3 \nmid m_1$ and $\b>0$, then $\b=\a=\c+2\b$, which is impossible.  Thus
\begin{equation} \label{8.10}
3 \nmid m_1 \Longleftrightarrow 3 \nmid (m-2)n.
\end{equation}
Let $m_4 = 4m_1/(m-2,n)$, which is an integer since
$m_4 = dn_1^2 - m_1^2$.   Let $m_0$
denote the odd part of $m_4$.
We have $m_0 \mid m_1$,  so $m_0$ is relatively prime with $n_1$.
Also $m_0$ divides $m_4 (m+2) =4dn_1^2$.   Thus
\begin{equation} \label{8.11}
m_0 \mid d.
\end{equation}
Consequently $(m_4,3d+1)=1$ when $2\mid d$ and $(m_4,3d+4)=1$ when $2\nmid d$.

We claim that 
\begin{equation}\label{8.12}
9 \mid m_1 \Longleftrightarrow m \e 2 \pmod{27}.
\end{equation}
To see this, observe that (8.12) is equivalent to
\begin{equation}\label{8.13}
\a-\b \ge2  \Longleftrightarrow \a \ge 3.
\end{equation}
If $\b=0$, then $\a \le 1$, so both sides of (8.13) are false.
If $\b>0$, then
(8.13) is equivalent to
\begin{equation}\label{8.14}
\c+\b \ge2  \Longleftrightarrow \c+2\b \ge 3.
\end{equation}
Since $\c=0$ or $\c=1$, (8.14) holds, so (8.12) is proved.

In view of (8.12), the proof in Case 2 of Theorem 6.2
shows that if either $9 \mid m$ or $m \e 2 \pmod{27}$,
then $\f {m+n\sqrt d}2$ is a cubic residue of the primes  $p=x^2+3dy^2$.
The converse is a consequence of the following technical theorem.

\begin{thm}\label{Thm 8.2}
Suppose that $m^2-dn^2=4$, $9 \nmid m$, $27 \nmid (m-2)$,
and $\ord_3(m-2) \ge \ord_3 n$.
Then modulo $p=x^2 + 3dy^2$, we have
\begin{equation}\label{8.15}
\Big(\f {m+n\sqrt d}2\Big)^{\f {p-1}3}\e 
\begin{cases} 
	\f  12\big(-1+ (\f {(m-2)n/3}3)\f {x}{dy}\sqrt d\big)
&\hbox{if $m\e 5,8 \pmod 9$,} \\
\f  12\big(-1- (\f {(m-2)n/27}3)\f
{x}{dy}\sqrt d\big)
	&\hbox{if $m\e  11,20\pmod {27}$,}\\
\f  12\big(-1+ (\f{mn/3}3)\f {x}{dy}\sqrt d\big)
&\hbox{if $m\e 0\pmod 3$,}\\
\f  12\big(-1+ (\f{(m+2)n/3}3)\f {x}{dy}\sqrt d\big)
&\hbox{if $m\e 1\pmod 3$,}
\end{cases}
\end{equation}
 where $\big(\f{\cdot}3\big)$ is the Legendre symbol.
\end{thm}

\begin{proof}
Note that $9 \nmid n$, otherwise $27$ would divide $m-2$.
When $3 \mid (m-2)$, we have $3\parallel m_1$ by (8.10) and (8.12).

\noindent {\bf Case 1:} $2 \mid d$.

\noindent
In this case $(m,n)=2$.
Suppose first that $3 \mid (m-2)$, so that either
$m \e 5,8 \pmod 9$ or $m \e 11,20 \pmod{27}$.
Using (8.1)--(8.3) and (8.5), we see that
\begin{align*}
&\Big(\f{-6n_1+6m_1(1+2\omega)}{3d+1}\Big)_3
\\&=\Big(\f{m_1-n_1+2m_1\omega}{3d+1}\Big)_3
=\Big(\f{m_1-n_1+2m_1\omega}{n_1}\Big)_3
\Big(\f{m_1-n_1+2m_1\omega}{(3d+1)n_1^2}\Big)_3
 \\&=\Big(\f{1+2\omega}{3d+1}\Big)_3
 \Big(\f{(3d+1)n_1^2}{m_1-n_1+2m_1\omega}\Big)_3
 =\Big(\f{3m_1^2+n_1^2+3m_4}{m_1-n_1+2m_1\omega}\Big)_3
 \\&=\Big(\f{3m_4}{m_1-n_1+2m_1\omega}\Big)_3
=\Big(\f{3}{m_1-n_1+2m_1\omega}\Big)_3\Big(\f{m_4}{m_1-n_1+2m_1\omega}\Big)_3.
 \end{align*}

Assume that $m \e 5,8 \pmod 9$.
Then $3\ \Vert\ m_1$, $3\nmid n$ and $3\ \Vert\ \f{m-2}{(m-2,n)^2}$.
From the above we deduce that
\begin{align*}
&\Big(\f{-6n_1+6m_1(1+2\omega)}{3d+1}\Big)_3
\\&=\Big(\f{3^2}{m_1-n_1+2m_1\omega}\Big)_3
\Big(\f{m_4/3}{m_1-n_1+2m_1\omega}\Big)_3
\\&=\omega^{\f{2m_1}3}
\Big(\f {m_1-n_1+2m_1\omega}{m_4/3}\Big)_3
\\&=\omega^{\f{2m_1}3}
\Big(\f {-n_1}{m_4/3}\Big)_3
=\omega^{\f{2m_1}3}=\omega^{2(\f{(m-2)n/3}3)},
\end{align*}
where the last equality follows because $(m-2,n)^2 \e 1 \pmod 3$.

Applying \cite[Thm. 5.5]{S} with the quadratic form (8.7),
and using (8.8), we obtain the first line in (8.15).

Next assume
$m\e 11,20\pmod {27}$. 
Then $9\ \Vert\ m-2$. By (8.10) and (8.12),
$3\ \Vert\ m_1$. Hence $3\ \Vert\  n$ and $3\nmid m_4$. This time
\begin{align*}
&\Big(\f{-6n_1+6m_1(1+2\omega)}{3d+1}\Big)_3\\&
=\Big(\f{3}{m_1-n_1+2m_1\omega}\Big)_3
\Big(\f{m_4}{m_1-n_1+2m_1\omega}\Big)_3
\\&=\omega^{\f{m_1}3}
\Big(\f {m_1-n_1+2m_1\omega}{m_4}\Big)_3
=\omega^{\f{m_1}3}
\Big(\f {-n_1}{m_4}\Big)_3
=\omega^{\f{m_1}3}=\omega^{(\f{(m-2)n/27}3)},
\end{align*}
where the last equality follows because $(m-2,n)^2 \e 0\pmod 9$.

Applying \cite[Thm. 5.5]{S} with the quadratic form (8.7),
and using (8.8),we obtain the second line in (8.15).

Finally assume that  $m\e 0,1\pmod 3$. 
We have $9 \nmid m$, $3\nmid m-2$ and $3 \nmid n$ 
since $\ord_3(m-2)\ge \ord_3 n$. 
Hence $3\nmid m_1n_1$ and so
$4(2-m)\e m_4=m_1^2-dn_1^2\e 1-d\pmod 3$. This implies $d\e m-1\pmod 3$.

First assume $m_1\e n_1\pmod 3$.
Then
\begin{align*}
&\Big(\f{-6n_1+6m_1(1+2\omega)}{3d+1}\Big)_3
\\&=\Big(\f{m_1-n_1+2m_1\omega}{3d+1}\Big)_3
=\Big(\f{\omega}{3d+1}\Big)_3\Big(\f{m_1+n_1-(m_1-n_1)\omega}{3d+1}\Big)_3
\\&=\omega^{-d}\Big(\f{m_1+n_1-(m_1-n_1)\omega}{n_1}\Big)_3
\Big(\f{m_1+n_1-(m_1-n_1)\omega}{(3d+1)n_1^2}\Big)_3
\\&=\omega^{-d}\Big(\f{1-\omega}{n_1}\Big)_3
\Big(\f{(3d+1)n_1^2}{m_1+n_1-(m_1-n_1)\omega}\Big)_3
\\&=\omega^{-d}\Big(\f{1-\omega}{n_1}\Big)_3
\Big(\f{3m_1^2+n_1^2+3m_4}{m_1+n_1-(m_1-n_1)\omega}\Big)_3
\\&=\omega^{-d}\Big(\f{1-\omega}{-n_1}\Big)_3
\Big(\f{3m_4}{m_1+n_1-(m_1-n_1)\omega}\Big)_3
	\\&=\omega^{-d}\cdot \omega^{\f {2(1-n_1)}3}
\Big(\f{3}{m_1+n_1-(m_1-n_1)\omega}\Big)_3
\Big(\f{m_4}{m_1+n_1-(m_1-n_1)\omega}\Big)_3
	\\&= \omega^{\f {2(1-n_1)}3-d}
\Big(\f{3}{m_1+n_1-(m_1-n_1)\omega}\Big)_3
\Big(\f{m_1+n_1-(m_1-n_1)\omega}{m_4}\Big)_3
\\&= \omega^{\f {2(1-n_1)}3 -d}\cdot 
\omega^{\f {2(n_1-m_1)}3}
\Big(\f{1+\omega}{m_4}\Big)_3
= \omega^{\f {2(1-m_1)}3-d}
\Big(\f{-\omega^2}{m_4}\Big)_3\\&
= \omega^{\f {m_1-1}3+(1-m)}\Big(\f{\omega}{-m_4^2}\Big)_3
=\omega^{\f{1-(m+1)(\f {m-2}3)}3},
\end{align*}
where the last equality follows (after a tedious calculation)
using the congruence $m-2 \e (m-2,n) \pmod 3$.

If on the other hand  $m_1\e -n_1\pmod 3$, then by (8.4),
\begin{equation*}
\Big(\f{-6n_1+6m_1(1+2\omega)}{3d+1}\Big)_3=
\Big(\f{-6(-n_1)+6m_1(1+2\omega)}{3d+1}\Big)_3^{-1}=
\omega^{-\f{1-(m+1)(\f{m-2}3)}3}.
\end{equation*}
Since $m_1\e (\f{(m-2)n}3)n_1\pmod 3$, we have in either case
\begin{equation*}
\Big(\f{-6n_1+6m_1(1+2\omega)}{3d+1}\Big)_3=
\omega^{(\f{(m-2)n}3)\f{1-(m+1)(\f{m-2}3)}3}
=\omega^{(\f{n}3)\f{(\f{m-2}3)-(m+1)}3}.
\end{equation*}
The rightmost member  equals
$\omega^{-(\f{mn/3}3)}$ or  $\omega^{-(\f{(m+2)n/3}3)}$
according as $m$ is congruent to $0$ or $1$ mod $3$.
Applying \cite[Theorem 5.5]{S} with the quadratic form (8.7), 
and using (8.8), we obtain the last two lines of (8.15).

\noindent {\bf Case 2:}  $2 \nmid d$.

\noindent
Setting  $x_1=y$ and $y_1=\f {x+2y}3$, we have
\begin{equation}\label{8.16}
p=(3d+4)x_1^2-12x_1y_1+9y_1^2,
\end{equation}
and setting $x_2=\f{x-y}3$ and $y_2=-\f{2x+4y}3$, we get
\begin{equation}\label{8.17}
p=(3d+4)x_2^2+(3d-2)x_2y_2+\f{3d+1}4y_2^2\quad\t{for}\quad d\e 1\pmod 4.
\end{equation}
Note that
\begin{equation}\label{8.18}
\Big(\f{-12n_1+6m_1(1+2\omega)}{3d+4}\Big)_3=
\Big(\f{(3d-2)n_1+3m_1(1+2\omega)}{3d+4}\Big)_3.
\end{equation}
It is easy to check that
\begin{equation}\label{8.19}
\f {2(3d+4)x_1-12y_1}{6dy_1}=\f{2(3d+4)x_2+(3d-2)y_2}{3dy_2}
\e -\f {x}{dy}\pmod p
\end{equation}
and
\begin{equation}\label{8.20}
(m_1-2n_1+2m_1\omega)(m_1-2n_1+2m_1\omega^2) =3m_1^2+4n_1^2
=(3d+4)n_1^2 -3m_4.
\end{equation}
Formulas (8.17)--(8.19) will be needed later when
applying \cite[Theorem 5.5]{S} for the case $d \e 1 \pmod 4$.

Suppose first that $3 \mid (m-2)$, so that either
$m \e 5,8 \pmod 9$ or $m \e 11,20 \pmod{27}$.
Using (8.1)--(8.3) and (8.20), we see that
\begin{align*}
&\Big(\f{-12n_1+6m_1(1+2\omega)}{3d+4}\Big)_3
\\&=\Big(\f{m_1-2n_1+2m_1\omega}{3d+4}\Big)_3=\Big(\f{m_1-2n_1+2m_1\omega}{n_1}\Big)_3
\Big(\f{m_1-2n_1+2m_1\omega}{(3d+4)n_1^2}\Big)_3
\\&=\Big(\f{1+2\omega}{n_1}\Big)_3
\Big(\f{(3d+4)n_1^2}{m_1-2n_1+2m_1\omega}\Big)_3
\\&=\Big(\f{3m_1^2+4n_1^2+3m_4}{m_1-2n_1+2m_1\omega}\Big)_3
=\Big( \f{3^2\cdot m_4/3}{m_1-2n_1+2m_1\omega}\Big)_3.
\end{align*}

Assume that $m\e 5,8\pmod 9$. 
Then $3\ \Vert\ m_1$, $3\nmid n$ and $3\ \Vert\ m_4$.  We have
\begin{align*}&\Big( \f{m_4/3}{m_1-2n_1+2m_1\omega}\Big)_3
	=\Big( \f{m_1-2n_1+2m_1\omega}{m_4/3}\Big)_3\\&
=\Big( \f{m_1-2n_1+2m_1\omega}{m_0/3}\Big)_3
=\Big( \f{-2n_1}{m_0/3}\Big)_3=1.
\end{align*}

Hence,
\begin{align*}
&\Big(\f{-12n_1+6m_1(1+2\omega)}{3d+4}\Big)_3
=\Big( \f{3}{-m_1+2n_1-2m_1\omega}\Big)_3^2\\&
=\omega^{\f{m_1}3}
=\omega^{(\f{m_1n_1/3}3)}=\omega^{(\f{(m-2)n/3}3)}.
\end{align*}

Applying \cite[Thm. 5.5]{S} with the quadratic forms (8.16),(8.17)
and using (8.19), we obtain the first line in (8.15).

Next assume
$m\e 11,20\pmod {27}$.
Then $9\ \Vert\ m-2$. By (8.10) and (8.12),
$3\ \Vert\ m_1$. Hence $3\ \Vert\  n$ and $3\nmid m_4$. We have

\begin{align*}&\Big( \f{m_4}{m_1-2n_1+2m_1\omega}\Big)_3
        =\Big( \f{m_1-2n_1+2m_1\omega}{m_4}\Big)_3\\&
=\Big( \f{m_1-2n_1+2m_1\omega}{m_0}\Big)_3
=\Big( \f{-2n_1}{m_0}\Big)_3=1.
\end{align*}

Hence,
\begin{align*}
&\Big(\f{-12n_1+6m_1(1+2\omega)}{3d+4}\Big)_3
=\Big( \f{3}{-m_1+2n_1-2m_1\omega}\Big)_3\\&
=\omega^{\f{2m_1}3}
=\omega^{(\f{2m_1n_1/3}3)}=\omega^{(\f{2(m-2)n/27}3)}.
\end{align*}

Applying \cite[Thm. 5.5]{S} with the quadratic forms (8.16),(8.17)
and using (8.19), we obtain the second line in (8.15).

Finally assume that  $m\e 0,1\pmod 3$.
As in Case 1, we have $9 \nmid m$, $3\nmid m-2$ 
and $3\nmid n$.
Hence $3\nmid m_1n_1$ and so
$4(2-m)\e m_4=m_1^2-dn_1^2\e 1-d\pmod 3$. This implies $d\e m-1\pmod 3$.

First assume $m_1\e n_1\pmod 3$.
Then
\begin{align*}
&\Big(\f{-12n_1+6m_1(1+2\omega)}{3d+4}\Big)_3
\\&=\Big(\f{m_1-2n_1+2m_1\omega}{3d+4}\Big)_3
=\Big(\f{-\omega^2}{3d+4}\Big)_3\Big(\f{2m_1+(m_1+2n_1)\omega}{3d+4}\Big)_3
\\&=\Big(\f{\omega}{-4-3d}\Big)_3^2\Big(\f{2m_1+(m_1+2n_1)\omega}{n_1}\Big)_3
\Big(\f{2m_1+(m_1+2n_1)\omega}{(3d+4)n_1^2}\Big)_3
\\&=\omega^{\f{2(1-4-3d)}3}\Big(\f{2+\omega}{n_1}\Big)_3\Big(\f{(3d+4)n_1^2}
{2m_1+(m_1+2n_1)\omega}\Big)_3
\\&=\omega^{d+1}\Big(\f{-\omega^2(1-\omega)}{n_1}\Big)_3
\Big(\f{3m_1^2+4n_1^2+3m_4}
{2m_1+(m_1+2n_1)\omega}\Big)_3
\\&=\omega^{m}\Big(\f{\omega^2(1-\omega)}{-n_1}\Big)_3
\Big(\f{3m_4}{2m_1+(m_1+2n_1)\omega}\Big)_3
\\&=\omega^{m}\cdot\omega^{\f{4(1-n_1)}3}
\Big(\f 3{2m_1+(m_1+2n_1)\omega}\Big)_3
\Big(\f{2m_1+(m_1+2n_1)\omega}{m_4}\Big)_3
\\&=\omega^{m}\cdot\omega^{\f{4(1-n_1)}3}\cdot \omega^{\f{2(m_1+2n_1)}3} 
\Big(\f{\omega}{m_4}\Big)_3
\end{align*}
because $m_0 \mid m_1$ and $m_4$ equals $m_0$ times a power of $2$.
Therefore
\begin{equation*}
\Big(\f{-12n_1+6m_1(1+2\omega)}{3d+4}\Big)_3=
\omega^{m+\f{4-4n_1+2m_1+4n_1+1-(\f{m-2}3)m_4}3} 
=\omega^{\f{(m+1)(\f{m-2}3)-1}3},
\end{equation*}
where the last equality follows (after a tedious calculation)
using the congruence $m-2 \e (m-2,n) \pmod 3$.

If on the other hand  $m_1\e -n_1\pmod 3$, then by (8.4),
\begin{equation*}
\Big(\f{-12n_1+6m_1(1+2\omega)}{3d+4}\Big)_3=
\Big(\f{-12(-n_1)+6m_1(1+2\omega)}{3d+1}\Big)_3^{-1}=
\omega^{-\f{(m+1)(\f{m-2}3)-1}3}.
\end{equation*}
Since $m_1\e (\f{(m-2)n}3)n_1\pmod 3$, we have in either case
\begin{equation*}
\Big(\f{-12n_1+6m_1(1+2\omega)}{3d+1}\Big)_3=
\omega^{(\f{(m-2)n}3)\f{(m+1)(\f{m-2}3)-1}3}
	=\omega^{(\f{n}3)\f{m+1 -(\f{m-2}3)}3}.
\end{equation*}
The rightmost member  equals
$\omega^{(\f{mn/3}3)}$ or  $\omega^{(\f{(m+2)n/3}3)}$
according as $m$ is congruent to $0$ or $1$ mod $3$.
Applying \cite[Theorem 5.5]{S} with the quadratic forms (8.16),(8.17)
and using (8.19), we obtain the last two lines of (8.15).
\end{proof}

For real $b$, $c$, let $\{U_k(b,c)\}$ be the Lucas sequence defined by
\begin{equation*}
U_0=0,\ U_1=1,\quad U_{k+1}=bU_k-cU_{k-1}\quad (k=1,2,3,\ldots).
\end{equation*}
It is well known that for $b^2-4c\ne 0$ and $k\ge 0$,
\begin{equation*}
U_k(b,c)=\f 1{\sqrt{b^2-4c}}\Big(
\Big(\f{b+\sqrt{b^2-4c}}2\Big)^k-\Big(\f{b-\sqrt{b^2-4c}}2\Big)^k\Big).
\end{equation*}
For squarefree $d>1$, nonzero integers $m$, $n$, and $\varepsilon = \pm 1$,
write  $m^2-dn^2=4\varepsilon$.  Then
\begin{equation*}
U_k(m,\varepsilon)=\f 1{n\sqrt{d}}\Big(
\Big(\f{m+n\sqrt{d}}2\Big)^k-\Big(\f{m-n\sqrt{d}}2\Big)^k\Big).
\end{equation*}

The two corollaries below evaluate 
the Lucas numbers $U_{\f{p-1}3}(m,\varepsilon) \pmod p$
for primes $p = x^2 +3dy^2$ with $p \nmid n$.   
When $3 \mid y$, it follows from
Theorem 6.2  that $\f{m+n\sqrt d}2$ is a cubic residue mod $p$,
so that $U_{\f{p-1}3}(m,\varepsilon) \e 0 \pmod p$.
Thus we assume that $3 \nmid y$.  Fix the signs
of $x$, $y$  so that $x \e y \pmod 3$.
From Theorems 6.1, 8.1 and 8.2,  we deduce:

\begin{cor}\label{Cor 8.3}
 
Suppose that $m^2-dn^2=-4$. Then
\begin{equation*}
U_{\f{p-1}3}(m,-1) \e
	\begin{cases} 0\pmod p&\hbox{if $m\e 0,\pm 4\pmod{9}$,}
\\ \big(\f{mn/3}3\big) \f {x}{dny}\pmod p
	&\hbox{if $m\e \pm 3\pmod 9$,}
\\-\big(\f{mn}3\big)\f {x}{dny}\pmod p
	&\hbox{if $m\e \pm 1 \pmod 9$,}
\\\big(\f{mn}3\big)\f {x}{dny}\pmod p
	&\hbox{if $m\e \pm 2\pmod 9$.}
\end{cases}
\end{equation*}
\end{cor}
\begin{cor}\label{Cor 8.4}

Suppose that $m^2-dn^2=4$ with a sign of $m$ chosen
such that $\ord_3 (m-2) \ge \ord_3 n$. 
Then
\begin{equation*}
U_{\f{p-1}3}(m,1) \e
\begin{cases} 0\pmod p&\hbox{if $m\e 2\pmod{27}$,}
\\ \big(\f{(m-2)n/3}3\big) \f {x}{dny}\pmod p
	&\hbox{if $m\e 5,8\pmod 9$,}
\\-\big(\f{(m-2)n/27}3\big)\f {x}{dny}\pmod p
	&\hbox{if $m\e 11,20\pmod {27}$,}
\\\big(\f{mn/3}3\big)\f {x}{dny}\pmod p
	&\hbox{if $m\e 0 \pmod 3$,}
\\\big(\f{(m+2)n/3}3\big)\f {x}{dny}\pmod p
	&\hbox{if $m\e 1\pmod 3$.}
\end{cases}
\end{equation*}
\end{cor}

\begin{exa}
	Take $m=2t$, so that $\{U_k(m,1)\}$ is a sequence
of Chebyshev polynomials of the second kind in the argument $t$.
First suppose that $t$ is an integer multiple of $3$.
Choose $s \e 1 \pmod 3$ such that $(t^2-1)/s^2$ is 
squarefree.  Then by Corollary 8.4 with $n=2s$ and $d=(t^2-1)/s^2$, we have
$U_{\f{p-1}3}(m,1) \e \big(\f{t/3}3\big)\f {xs}{2y(t^2-1)}\pmod p$.
For example, with $t=21$, $s=-2$, $d=110$, $x=y=1$, and $p=331$, 
we have
$U_{110}(m,1)  \e 249 \pmod {331}$.
Next suppose that $t \e 2 \pmod 3$. Note that $9 \nmid (t^2-1)$,
otherwise $3 \mid n$, contradicting $\ord_3 (m-2) = 0$.
With $s$, $d$, and $n$ as above, it follows from Corollary 8.4 that
$U_{\f{p-1}3}(m,1) \e \big(\f{(t+1)/3}3\big)\f {xs}{2y(t^2-1)}\pmod p$.
\end{exa}

\end{document}